% ----------------------------------------------------------------
% AMS-LaTeX Paper ************************************************
% **** -----------------------------------------------------------
\documentclass[12pt]{amsart}
\usepackage{graphicx}
\usepackage{graphicx}
\usepackage{amsfonts, amsmath, amsthm, amssymb}
\usepackage{xfrac}
\usepackage{cite}
\usepackage{hyperref}
\usepackage{verbatim}
%\bibliographystyle{iopart-num}
%\usepackage{citesort}
%\documentclass[12pt]{amsart}
%\textwidth=30cc \baselineskip=16pt
% ----------------------------------------------------------------
\textwidth 160mm \textheight 225mm \topmargin -5mm \oddsidemargin
5mm \evensidemargin 5mm \baselineskip+6pt
%% ----------------------------------------------------------------
\vfuzz2pt % Don't report over-full v-boxes if over-edge is small
\hfuzz2pt % Don't report over-full h-boxes if over-edge is small
% THEOREMS -------------------------------------------------------
\newtheorem{thm}{Theorem}[section]
\newtheorem{cor}[thm]{Corollary}
\newtheorem{lem}[thm]{Lemma}
\newtheorem{prop}[thm]{Proposition}
\newtheorem{exm}[thm]{Example}
\newtheorem{conj}[thm]{Conjecture}
\newtheorem{defn}[thm]{Definition}
\newtheorem{rem}[thm]{Remark}
\numberwithin{equation}{section}

\newcommand{\norm}[1]{\left\Vert#1\right\Vert}
\newcommand{\abs}[1]{\left\vert#1\right\vert}

% big chi
% big chi

\def\eb{{\mathbf{e}}}

\def\bn{{\mathbb N}}

\def\br{{\mathbb R}}

\def\d{\delta}

\def\f{\varphi}

\def\w{\omega}

\def\ab{{\mathbf{a}}}

\def\xb{{\mathbf{x}}}
\def\yb{{\mathbf{y}}}
\def\zb{{\mathbf{z}}}

\def\yb{{\mathbf{y}}}
%\def\sb{{\mathbf{s}}}

%\def\e{{\bf 1}\!\!{\rm I}}

%----------------------------------------------------------------------

%--------------------------------------------------------------------------
\begin{document}
    \title[On omega limiting sets]{On omega limiting sets of infinite dimensional Volterra operators}

 \author[F. Mukhemedov]{Farrukh Mukhamedov}
\address{Farrukh Mukhamedov\\
 Department of Mathematical Sciences\\
College of Science, The United Arab Emirates University\\
P.O. Box, 15551, Al Ain\\
Abu Dhabi, UAE} \email{{\tt far75m@gmail.com}, {\tt
farrukh.m@uaeu.ac.ae}}

\author[O. Khakimov]{Otabek Khakimov}
\address{Otabek Khakimov\\
Department of Mathematical Sciences\\
 College of Science, The United Arab Emirates University\\
P.O. Box, 15551, Al-Ain\\
Abu Dhabi, UAE} \email{{\tt hakimovo@mail.ru},
{\tt
otabek.k@uaeu.ac.ae}}

\author[A.F. Embong]{Ahmad Fadillah Embong}
\address{Ahmad Fadillah Embong\\
 Department of Computational \& Theoretical Sciences\\
Faculty of Science, International Islamic University Malaysia\\
P.O. Box, 141, 25710, Kuantan\\
Pahang, Malaysia} \email{{\tt ahmadfadillah.90@gmail.com}}

    \begin{abstract}  In the present paper, we are aiming to study limiting behavior of infinite dimensional Volterra operators. We introduce two classes  $\tilde {\mathcal{V}}^+$ and $\tilde{\mathcal{V}}^-$of infinite dimensional Volterra operators. For operators taken from the introduced classes we  study their omega limiting sets $\omega_V$ and $\omega_V^{(w)}$ with respect to $\ell^1$-norm and pointwise convergence, respectively.  To investigate the relations between these limiting sets, we study linear Lyapunov functions for such kind of Volterra operators. It is proven that if Volterra operator belongs to $\tilde {\mathcal{V}}^+$, then the sets and $\omega_V^{(w)}(\xb)$ coincide for every $\xb\in S$, and moreover, they are non empty. If Volterra operator belongs to $\tilde {\mathcal{V}}^-$, then $\omega_V(\xb)$ could be empty, and it implies the non-ergodicity (w.r.t $\ell^1$-norm) of $V$, while it is weak ergodic.

\vskip 0.3cm \noindent {\it Mathematics Subject
           Classification}: 37B25, 37A30, 46N60.\\
        {\it Key words}:  Volterra operator;  infinite dimensional; ergodic; omega limiting sets; pointwinse convergence;
    \end{abstract}

\maketitle

\section{Introduction} %\noindent

It is known \cite{hsbook,11} that nonlinear (in particular, quadratic) mappings appear in various branches
of mathematics and their applications: the theory of differential
equations, probability theory, the theory of dynamical systems, mathematical economics,
mathematical biology, statistical physics, etc.

In the theory of population genetics or game theory, mathematical models are governed by the quadratic differential systems
$$
\frac{d{x}_i}{dt}=\sum_{j,k}a_{jk}^ix_jx_k,\ \ \ i\in\{1,2,\dots,n\},
$$
for a large interacting population of $n$ constituents,
where the numbers $x_i$ represent the fraction of constituents of type $i$, $i\in\{1,2,\dots,n\}$
and satisfy the conservation law $\sum_ix_i=1$ and the tensors $\{a_{jk}^i\}$ of order $n$
having $n^3$ real constants which satisfy
\begin{equation}\label{aij}
a_{jk}^i=a_{kj}^i,\ \ \sum_{i}a_{jk}^i=1,\,\ \  a_{jk}^i\geq 0 \ \ \mbox{for all}\ j\neq i,\ k\neq i, \ i,j,k\in\{1,2,\dots, n\}.
\end{equation}

We notice that the discrete time system corresponding to \eqref{aij} is governed by \textit{quadratic stochastic operators (q.s.o.)}
which appeared in the works of Bernstein \cite{1}.  Such kind of operators
is usually employed to describe the time evolution of species in biology \cite{11}.
It turns out that quadratic dynamical systems are considered an important source of analysis in the study of
dynamical properties and for modeling in various fields, such as population dynamics
\cite{dahl,fish1,hsbook,kest}, physics \cite{plank,udwad}, economics \cite{ulam,ulam1}
and mathematics \cite{hsbook,kest,11}.
Some of the most important findings in the theory of quadratic stochastic operators emerged when
Markov processes were employed to describe some physical and biological systems. Perhaps the best known work on
quadratic models is Lotka-Volterra systems \cite{HHJ,29}.
We note that
the biological treatment of Volterra operators is rather clear: the offspring repeats one of its parents.
In studying the Volterra dynamical systems (when the dynamical system acts on finite dimensional simplex) for a given biological population
one may ask the following question: what kind of genotypes will preserve
and which of them will disappear? A lot of papers are devoted to the investigations of discrete Volterra
operators defined on finite dimensional simpleces \cite{R.gani_tournment,ngrguj,28,zakh}.

In recent decades in the game theory, evolutionary and dynamical
aspects of quadratic dynamical systems have dramatically increased in popularity. Hofbauer and Sigmund's book
\cite{hsbook} serves as a very good introduction to this theory.
We point out that the Volterra operators, in the discrete setting, describe the discrete time zero-sum evolutionary
game dynamics \cite{ngrguj}.
In this direction, zero-sum games and their
evolutionary dynamics were studied by Akin and Losert \cite{AL}.
On the other hand, it is important to investigate dynamics of the Volterra system when the spices in the system is huge \cite{Nag}.
Roughly speaking, what happens if
the game involves a large number of players?
This naturally
leads our attention to the following problem: what is the dynamical behavior of Volterra
operators on an infinite dimensional simplex? In \cite{M2000,Far_Has_Temir} a certain construction of infinite dimensional Volterra operators was studied, but the investigation of their dynamics were left out. In the present paper, will study the limiting behavior of such kind of operators.

We stress that dynamical behavior of Volterra operators on finite dimensional simplex was studied in \cite{R.gani_tournment} by means of
graph tournaments (see also \cite{ngrguj}).  In the mentioned papers,  the compactness of the finite dimensional simplex was essentially used, which allowed to obtain deep results.  
However, research on dynamics for infinite-dimensional dynamical systems is
more complicated and more difficult than that for finite-dimensional ones. In this settings, many things dramatically change and some of well-known facts are violated. In
recent years, there have been attempts to find methods for studying the dynamical and chaotic
behavior of partial differential equations (PDEs) with some promising results \cite{Ch,BFL,SC}.

In the present paper, we first time investigate dynamics of infinite dimensional Volterra operators. Namely, 
we introduce two classes of infinite dimensional Volterra operators, and study their omega limiting sets with respect to $\ell^1$-norm and pointwise convergence. Moreover, we will discuss their relationship, which allows us to investigate ergodic averages of the considered operators.
Our investigations will open a new insight to this topic. 

First recall that a quadratic stochastic operator $V$ is called \textit{Volterra}
if and only if it can be represented
as follows:
\begin{equation}\label{Vol}
(V(\xb))_k=x_k\left(1+\sum_{i=1}^\infty a_{ki}x_i\right),\ \ k\in\mathbb N,
\end{equation}
where $\xb=(x_1,x_2,\dots)\in S$ and
\begin{equation}\label{A1}
a_{ki}=-a_{ik},\ |a_{ki}|\leq1\ \ \mbox{for every }k,i\in\mathbb N.
\end{equation}

By $\mathcal V$ we denote the set of all Volterra operators on infinite dimensional
simplex $S$, and $\mathbb{A}$ denotes the set of all skew-symmetric matrices with \eqref{A1}.
The representation \eqref{Vol} establishes a one-to-one correspondence
$\frak{f}:\mathcal {V}\to\mathbb{A}$ by $\frak{f}(V)=(a_{ki})$. It is clear that $\frak{f}$ is affine, hence
$\mathcal V$ is convex, and moreover, this correspondence allows to investigate certain
geometric properties of  $\mathcal {V}$ by means of structure of the set $\mathbb{A}$ (see \cite{Far_Has_Temir} for more details).

For a given operator $V$
on $S$, by $ \{ V^{n}(\xb_0) \}_{n=1}^{\infty} $  we denote the trajectory of a
point $ \xb_0 \in S $ under $V$. By $ \w_V
(\xb_0) $ (respectively, $ \w_V^{(w)}
(\xb_0) $)  we denote the set of limit points of $ \{ V^{n}(\xb_0) \}_{n=1}^{\infty} $
with respect to $\ell^1$-norm (respectively, pointwise convergence).

In what follows, by a {\it fixed point} of $V$ we mean a vector $\xb\in S$ such that $V(\xb)=\xb$. By
$Fix(V)$ we denote the set of all
fixed points of $V$.

Obviously, if $ \w_V(\xb_0)  $ consists of a single point, i.e. $\omega_V(\xb_0)=\{\xb^*\}$,
then the trajectory  $ \{ V^{n}(\xb_0) \}_{n=1}^{\infty} $ converges to $\xb^*$. Moreover, $\xb^*$ is a fixed
point of $V$. However, looking ahead, we remark
that convergence of trajectories is not a typical case for the
dynamical systems \eqref{Vol}. Therefore, it is of
particular interest to obtain an upper bound for $ \xb_{0} \in S $,
i.e., to determine a sufficiently "small" set containing limiting point $ \xb^*$ under trajectory of Volterra operators.

Denote
\begin{eqnarray*}
&&\mathbb{A}^+=\{(a_{ki})\in\mathbb{A}: \ a_{ki}\geq0\ \mbox{for all }k<i\},\\[2mm]
&&\mathbb{A}^-=\{(a_{ki})\in\mathbb{A}: \ a_{ki}\leq0\ \mbox{for all }k<i\}.
\end{eqnarray*}

We define two subclasses of $\mathcal V$ as follows:
$$
\mathcal V^+=\{V\in\mathcal{V:} \  \frak{f}(V)\in \mathbb{A}^+ \},\ \
\mathcal V^-=\{V\in\mathcal{V}: \  \frak{f}(V)\in \mathbb{A}^- \}.$$

We say a Volterra operator  $V$ belongs to the set $\tilde{\mathcal V}^+$ (resp. $\tilde{\mathcal V}^-$) if there exists some
$k_0\geq1$ such that $a_{ki}=0$ for any $k<k_0<i$ and $a_{ki}\geq0$ (resp. $a_{ki}\leq0$) for every $k_0\leq k<i$.
In other words, $V\in\tilde{\mathcal V}^+$ (resp. $V\in\tilde{\mathcal V}^-$) if and only if either $\frak{f}(V)\in \mathbb{A}^+$
(resp. $\frak{f}(V)\in \mathbb{A}^-$) or there exist an integer $k_0\geq2$ and matrices $A, B$ such that
$$
\frak{f}(V)=\left(
\begin{array}{ll}
A & O\\
O & B
\end{array}
\right)
$$
where $A$ is a $(k_0-1)\times (k_0-1)$-skew-symmetric matrix, $O$ is a null matrix and $B\in\mathbb{A}^+$ (resp. $B\in\mathbb{A}^-$).

We note that $\mathcal V^+\subset\tilde{\mathcal V}^+$, $\mathcal V^-\subset\tilde{\mathcal V}^-$
 and
$\mathcal V^+\cap\mathcal V^-=\{Id\}$, here $Id$ stands for the identity operator.

Let us formulate main results of the present paper.

The next result shows that for operators taken from the class $\tilde{\mathcal V}^+$ their omega limiting sets are not empty and belong to the boundary of the simplex. However, for operators  $V$ taken from the class $\tilde{\mathcal V}^-$ omega limiting set $\omega_{V}({\xb}_0)$ could be empty.

\begin{thm}\label{thm_asos}
Let $V\in \tilde{\mathcal V}^+\cup\tilde{\mathcal V}^-$ and $V\neq Id$. Then the following statements hold:
\begin{enumerate}
\item[$(i)$]
if $V\in\tilde{\mathcal V}^+$ then $\omega_{V}({\xb}_0)=\omega_{V}^{(w)}({\xb}_0)$.
Moreover, $\omega_V^{(w)}(\xb_0)\in\partial S$ for all $\xb_0\in S\setminus{Fix(V)}$\\
\item[$(ii)$] if $V\in\tilde{\mathcal V}^-$ then for any
$\xb_0\in S\setminus{Fix(V)}$ one has:
\begin{enumerate}
\item[$(ii)_a$] $\omega_{V}({\xb}_0)=\omega_{V}^{(w)}({\xb}_0)\in\partial S$ if $\omega_{V}(\xb_0)\neq\emptyset$;\\
\item[$(ii)_b$] if $\omega_{V}({\xb}_0)=\emptyset$ then there exists $r<1$ such that
$\omega_{V}^{(w)}({\xb}_0)\in\partial S_r$.
\end{enumerate}
\end{enumerate}
\end{thm}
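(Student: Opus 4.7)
The plan is to build a natural family of linear Lyapunov functions from the block structure of $\tilde{\mathcal V}^\pm$, use their monotonicity to decide whether the orbit is $\ell^1$-tight or loses mass at infinity, and then run a Fatou-type argument to force zero coordinates at every limit point.

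For each $K\ge k_0$ set $\psi_K(\xb)=\sum_{i=1}^{K}x_i$. Inserting \eqref{Vol} and rearranging via skew-symmetry yields the key identity
\begin{equation}\label{plan:key}
\psi_K(V(\xb))-\psi_K(\xb)=\sum_{k\le K<i}a_{ki}\,x_kx_i.
\end{equation}
The block hypothesis kills every term in \eqref{plan:key} with $k<k_0$; on the surviving range $k_0\le k\le K<i$ every $a_{ki}$ shares the sign of $V$, non-negative for $\tilde{\mathcal V}^+$ and non-positive for $\tilde{\mathcal V}^-$. Hence $n\mapsto\psi_K(V^n(\xb_0))$ is monotone and $L_K:=\lim_n\psi_K(V^n(\xb_0))\in[0,1]$ exists; since $\psi_K$ is non-decreasing in $K$, so is $L_K$, and the double limit $L_\infty:=\lim_K L_K\in[0,1]$ is well defined. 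A parallel computation using skew-symmetry and the block hypothesis shows that $\psi_{k_0-1}$ is $V$-invariant.

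To prove the equality $\omega_V(\xb_0)=\omega_V^{(w)}(\xb_0)$: in case (i) the bound $L_K\ge\psi_K(\xb_0)\to 1$ yields $L_\infty=1$; in case (ii)$_a$ any $\ell^1$-limit $\yb$ lies in $S$ and satisfies $\psi_K(\yb)=L_K$, so $\|\yb\|_1=1$ again forces $L_\infty=1$. In either case, for every $\eps>0$ one finds $K,N$ with $\sum_{i>K}(V^n(\xb_0))_i<\eps$ whenever $n\ge N$, i.e.\ the orbit is uniformly $\ell^1$-tight; then any pointwise limit $V^{n_j}(\xb_0)\to\yb$ is automatically an $\ell^1$-limit by Scheff\'e's lemma, giving $\omega_V^{(w)}(\xb_0)\subset\omega_V(\xb_0)$ (the reverse inclusion being trivial). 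For case (ii)$_b$ a diagonal extraction produces a pointwise-convergent subsequence $V^{n_j}(\xb_0)\to\yb$ with $\sum_{i\le K}y_i=L_K$, hence $\|\yb\|_1=L_\infty$; if $L_\infty=1$ the previous tightness argument would produce an $\ell^1$-limit, contradicting $\omega_V(\xb_0)=\emptyset$, so $r:=L_\infty<1$ and $\omega_V^{(w)}(\xb_0)\subset S_r$.

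For the boundary claim I telescope \eqref{plan:key} to get $\sum_n|\psi_K(V^{n+1}(\xb_0))-\psi_K(V^n(\xb_0))|<\infty$, so the single-step increment tends to zero along $n_j$; since within each class all surviving summands share one sign, Fatou's lemma yields $a_{ki}y_ky_i=0$ for every pair $k_0\le k<i$. If $\yb$ were coordinate-wise strictly positive, then $a_{ki}=0$ on the whole lower block, so $V$ would act as the identity on coordinates $\ge k_0$. In case (ii)$_b$ this contradicts $\|\yb\|_1=r<1$ at once, because identity-on-the-tail preserves $\sum_{k\ge k_0}x_{0,k}$ while $V$-invariance of $\psi_{k_0-1}$ preserves $\sum_{k<k_0}x_{0,k}$, forcing $\|\yb\|_1=1$. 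In cases (i) and (ii)$_a$, where $L_\infty=1$ already, the contradiction must instead come from the induced upper-block operator $\tilde V$: I plan to invoke the classical finite-dimensional result that a non-constant orbit of a finite-dimensional Volterra operator on the $(k_0-1)$-simplex cannot have a coordinate-wise positive $\omega$-limit point, which together with $\xb_0\notin Fix(V)$ closes the argument. I expect this last transfer to be the main technical obstacle, together with keeping the Fatou step clean without assuming the (generally false) pointwise continuity of $V$ on $S$.
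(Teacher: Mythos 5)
Your proposal is correct, and it reaches Theorem \ref{thm_asos} by a genuinely different, more self-contained route than the paper. The paper assembles the theorem from Theorem \ref{thm_asos3} (the classes $\mathcal V^{+},\mathcal V^{-}$, treated via the quasi Lyapunov functionals $\varphi_{\bf b}$ with ${\bf b}\in c_0$ of Theorem \ref{thm_qLf}, the compactness Proposition \ref{Far}, and the ratio-limit computations of Theorems \ref{thm_LLF_B} and \ref{thm_LP_con_PW_<0}) together with Proposition \ref{thm_oxir}, which splits a $\tilde{\mathcal V}^{\pm}$ operator into a finite-dimensional head and a tail in $\mathcal V^{\pm}$ and quotes Ganikhodzhaev's Theorem \ref{rg1}. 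You instead work directly in the tilde classes with the truncated mass functionals $\psi_K$: monotonicity for $K\ge k_0$ and invariance of $\psi_{k_0-1}$ give the dichotomy $L_\infty=1$ (uniform tightness, so pointwise limit points upgrade to $\ell^1$ limit points by Scheff\'e/Lemma \ref{lem4}, yielding $\omega_V(\xb_0)=\omega_V^{(w)}(\xb_0)$ in (i) and (ii)$_a$) versus $L_\infty=r<1$ (case (ii)$_b$, every weak limit point having norm exactly $r$); and summability of the one-signed increments gives $a_{ki}y_ky_i=0$ for all $k_0\le k<i$ at every weak limit point $\yb$ --- here you do not even need Fatou, since each summand is dominated in absolute value by the vanishing increment --- which replaces the paper's ratio-limit arguments. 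The only step you leave as a plan is the one you flag: when a strictly positive limit point forces the whole block below $k_0$ to vanish, the tail of the orbit is frozen, the head evolves autonomously with conserved mass $r_1=\psi_{k_0-1}(\xb_0)$, and after rescaling by $r_1$ the head dynamics is exactly the finite-dimensional Volterra operator with skew-symmetric matrix $(r_1a_{ki})_{k,i<k_0}$, to which Theorem \ref{rg1} applies; the degenerate cases $r_1=0$ or $k_0\le2$ force $\xb_0\in Fix(V)$ or $V=Id$, both excluded. So this ``transfer'' is routine rather than a real obstacle (the paper's Proposition \ref{thm_oxir} needs, and glosses over, the same rescaling). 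What your route does not deliver, and the paper's machinery does, is the uniqueness of the weak limit point (Proposition \ref{thm_main}) and the ergodicity consequences; but for Theorem \ref{thm_asos} itself your argument is complete once the rescaling remark is written out.
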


From this result, it would be interesting to know about the cardinality of the omega limiting sets.
Next theorem reveals some information the mentioned question.

\begin{thm}\label{thm_asos2}
Let $V\in\tilde{\mathcal V}^+\cup\tilde{\mathcal V}^-$. Then for every
$\xb_0\in S$ the following statements hold:
\begin{enumerate}
\item[$(i)$] if $\omega_V(\xb_0)\neq\emptyset$ then $|\omega_V(\xb_0)|=1\vee\infty$;\\
\item[$(ii)$] $|\omega_{V}^{(w)}({\xb}_0)|=1\vee\infty$.
\end{enumerate}
\end{thm}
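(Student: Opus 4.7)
My plan is to establish the dichotomy by combining a family of monotone linear Lyapunov functionals, which pin down the tail of any $\omega$-limit point, with a reduction to a finite-dimensional Volterra subsystem that handles the head. For $V\in\tilde{\cv}^+$ with threshold $k_0$ and each integer $n\geq k_0$, I would first show that the partial sum $\phi_n(\xb)=\sum_{k=k_0}^n x_k$ is non-decreasing along every $V$-orbit. A direct computation using the block structure $a_{ki}=0$ for $k<k_0\leq i$ together with the skew-symmetry of $(a_{ki})_{k,i\geq k_0}$ reduces $\phi_n(V(\xb))-\phi_n(\xb)$ to $\sum_{k=k_0}^n\sum_{i>n}a_{ki}x_kx_i\geq 0$; the sign reverses for $V\in\tilde{\cv}^-$. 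Since each $\phi_n$ is bounded and continuous in the pointwise topology (being a finite sum), the limit $L_n:=\lim_m\phi_n(V^m(\xb_0))$ exists, and any $p\in\omega_V^{(w)}(\xb_0)$ satisfies $\phi_n(p)=L_n$. Differencing yields $p_k=L_k-L_{k-1}$, the same value for every $p\in\omega_V^{(w)}(\xb_0)$ and every $k\geq k_0$, so the entire tail of any $\omega$-limit point is rigid.

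Next I would reduce the head to a finite-dimensional dynamical system. Because $a_{ki}=0$ whenever $k<k_0\leq i$, the first $k_0-1$ coordinates evolve among themselves via $(V(\xb))_k=x_k\bigl(1+\sum_{j<k_0}a_{kj}x_j\bigr)$, and $c:=\sum_{k<k_0}x_k$ is conserved. After rescaling by $c$, this becomes a finite-dimensional Volterra operator $V'$ on the $(k_0-2)$-simplex, and the head of any $\omega$-limit point of $V$ projects to an $\omega$-limit point of $V'$. Together with the tail rigidity above, the cardinality question for $\omega_V^{(w)}(\xb_0)$ reduces to the analogous question for $V'$.

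The crux is then the finite-dimensional claim: any $\omega$-limit set of a finite-dimensional Volterra operator has cardinality $1$ or $\infty$. If $\omega_{V'}$ were finite with $T\geq 2$ elements, continuity of $V'$ forces it to permute these points and yield a periodic orbit of period $\geq 2$. Since Volterra preserves supports, the orbit lies in a common face, whose Cesaro average $p^*$ is a fixed point in the relative interior of that face. Using the logarithmic Lyapunov $\psi(\xb)=\sum p_k^*\ln x_k$, the skew-symmetry identity $\sum_{k,i}p_k^*a_{ki}p_i^*=0$, and Jensen's inequality applied to $\ln(1+\sum_i a_{ki}x_i)$, one would show $\psi(V'(\xb))\geq\psi(\xb)$ with equality iff $\xb=p^*$; this contradicts a period $\geq 2$ and forces $T=1$. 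Combined with the tail rigidity, if $\omega_V^{(w)}(\xb_0)$ is finite then it is a singleton, and the same conclusion transfers to $\omega_V(\xb_0)\subseteq\omega_V^{(w)}(\xb_0)$.

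The hardest step is the strict monotonicity of the logarithmic Lyapunov in the finite-dimensional reduction: the point $p^*$ is selected from the orbit rather than being globally canonical, the supporting face may be low-dimensional so $\psi$ must be interpreted relative to that face, and the strict-inequality form of Jensen requires genuine non-degeneracy of the distributions $(x_i)$ against the weights $a_{ki}$. This is where the argument is most delicate, and where either an appeal to a finite-dimensional Ganikhodzhaev-type result or a careful hand computation will be needed.
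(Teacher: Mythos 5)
Your proposal is correct in substance and follows essentially the same route as the paper: split $V$ along the block structure at $k_0$ into a finite-dimensional Volterra operator on the first $k_0-1$ coordinates (whose total mass is conserved) and an operator from $\mathcal V^+\cup\mathcal V^-$ acting on the tail, show that all weak limit points share one and the same tail, and let the finite-dimensional dichotomy decide the cardinality. The differences are mostly cosmetic: where you prove tail rigidity directly with the monotone, pointwise-continuous partial sums $\phi_n$ (your sign computation is right), the paper gets the same singleton statement from its quasi-Lyapunov functionals (Theorem \ref{thm_qLf} and Proposition \ref{thm_main}); and where you leave the finite-dimensional step as ``cite or reprove'', the paper simply cites Ganikhodzhaev's theorem (Theorem \ref{rg1}$(ii)$) and then combines the two blocks via the decomposition of Proposition \ref{thm_oxir} and Theorem \ref{omega=omega}. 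Two cautions. First, your optional from-scratch argument for the finite-dimensional claim does not work as sketched: the Ces\`aro average of a periodic orbit of a quadratic (nonlinear) map need not be a fixed point, so $p^*$ need not satisfy $\sum_k p^*_ka_{ki}=0$ on the supporting face, and the equality case of Jensen only forces $\sum_i a_{ki}x_i$ to be constant over $k\in supp(p^*)$ rather than $\xb=p^*$; so take the citation route, exactly as the paper does. Second, the transfer to part $(i)$ needs one more line than ``$\omega_V(\xb_0)\subseteq\omega_V^{(w)}(\xb_0)$'', since a priori an infinite weak limit set could contain a finite norm limit set with two or more points: if some subsequence converges in norm, the common tail must have full mass, hence every weak limit point lies on $S$, and Lemma \ref{lem4} upgrades all pointwise limits to norm limits, giving $\omega_V(\xb_0)=\omega_V^{(w)}(\xb_0)$ whenever the former is nonempty; this is the role Theorem \ref{omega=omega} (together with compactness of the finite-dimensional head) plays in the paper's proof.
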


It is known \cite{R.gani_tournment} that  any Volterra operator on finite dimensional simplex does not have periodic orbit.  The last theorem implies  that we have a similar kind of situation for  operators from the class $\tilde{\mathcal V}^+\cup\tilde{\mathcal V}^-$.  However, in general, we do not know the structure of the omega limiting sets. That will be a topic for our further investigations.

The next result clarifies location of the omega limiting sets for the considered classes.

\begin{thm}\label{thm_asos3}
Let $V\in\mathcal V^+\cup\mathcal V^-$. Then for any
$\xb_0\in S$ the following statements hold:
\begin{enumerate}
\item[$(i)$] if $V\in\mathcal V^+$ then $\omega_V(\xb_0)=\omega_V^{(w)}(\xb_0)\in\partial S$. Moreover,
$|\omega_V(\xb_0)|=1$;\\
\item[$(ii)$] if $V\in\mathcal V^-$ then one has:
\begin{enumerate}
\item[$(ii)_a$] $\omega_{V}({\xb}_0)=\omega_{V}^{(w)}({\xb}_0)\in\partial S$ if $\omega_{V}(\xb_0)\neq\emptyset$;\\
\item[$(ii)_b$] if $\omega_{V}({\xb}_0)=\emptyset$ then there exists $r<1$ such that
$\omega_{V}^{(w)}({\xb}_0)\in\partial S_r$;\\
\item[$(ii)_c$] $|\omega_V^{(w)}(\xb_0)|=1$.
\end{enumerate}
\end{enumerate}
\end{thm}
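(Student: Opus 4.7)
The plan is to reduce most of the claims to Theorem~\ref{thm_asos} via the inclusions $\mathcal V^+\subset\tilde{\mathcal V}^+$ and $\mathcal V^-\subset\tilde{\mathcal V}^-$. Concretely, Theorem~\ref{thm_asos}(i) immediately delivers $\omega_V(\xb_0)=\omega_V^{(w)}(\xb_0)\in\partial S$ for $V\in\mathcal V^+$, and Theorem~\ref{thm_asos}(ii) gives statements (ii)(a) and (ii)(b) verbatim. Thus the only genuinely new content is the cardinality claim: $|\omega_V(\xb_0)|=1$ in~(i) and $|\omega_V^{(w)}(\xb_0)|=1$ in~(ii)(c).

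For this I would introduce the family of linear functionals
$$
\varphi_k(\xb):=\sum_{i\geq k}x_i,\qquad k\geq 1,
$$
and use \eqref{Vol} together with the skew-symmetry $a_{ij}=-a_{ji}$ to obtain
$$
\varphi_k(V(\xb))-\varphi_k(\xb)=\sum_{i\geq k,\,j\geq 1}a_{ij}x_ix_j=\sum_{j<k\leq i}a_{ij}x_ix_j,
$$
since the terms with both $i,j\geq k$ cancel in skew-symmetric pairs. If $V\in\mathcal V^+$ then $a_{ji}\geq 0$ whenever $j<i$, whence $a_{ij}\leq 0$ in the remaining sum and $\varphi_k(V(\xb))\leq\varphi_k(\xb)$ on $S$; if $V\in\mathcal V^-$ the reverse inequality holds. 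In either case $\{\varphi_k(V^n(\xb_0))\}_{n\geq 0}$ is monotone and contained in $[0,1]$, hence converges for every $k$.

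Writing $x_k^{(n)}=\varphi_k(V^n(\xb_0))-\varphi_{k+1}(V^n(\xb_0))$, the convergence of every $\varphi_k$ along the orbit forces each coordinate of $V^n(\xb_0)$ to converge; equivalently, the trajectory has a unique pointwise limit, so $|\omega_V^{(w)}(\xb_0)|=1$. This settles (ii)(c), and combining with the identity $\omega_V(\xb_0)=\omega_V^{(w)}(\xb_0)$ from Theorem~\ref{thm_asos}(i) in the $\mathcal V^+$ case yields $|\omega_V(\xb_0)|=1$, completing~(i). The only technical point I expect to verify with care is the rearrangement of the double series in the Lyapunov computation; this is however immediate from $|a_{ij}|\leq 1$ and $\sum_i x_i = 1$, which together give $\sum_{i,j}|a_{ij}|x_ix_j\leq 1$ and hence absolute summability.
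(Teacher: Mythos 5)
Your reduction of everything except the cardinality claims to Theorem \ref{thm_asos} is circular in the context of this paper: Theorem \ref{thm_asos} is not proved independently here; its proof consists precisely of invoking Theorem \ref{thm_asos3} (the statement you are asked to prove) together with Proposition \ref{thm_oxir}. So you cannot use it. The available ingredients are the results of Sections 5--7, which are established without reference to the main theorems: Theorem \ref{omega=omega} gives $\omega_V(\xb_0)=\omega_V^{(w)}(\xb_0)$ for $V\in\mathcal V^+$ and, for $V\in\mathcal V^-$, the equivalence of this equality with $\omega_V(\xb_0)\neq\emptyset$; Theorem \ref{thm_LLF_B} gives $\omega_V(\xb_0)\subset\partial S$ for $V\in\mathcal V^+$; Proposition \ref{prop_w=w} and Theorem \ref{thm_LP_con_PW_<0} (summarized as Theorem \ref{thm_say}) give, for $V\in\mathcal V^-$, that $\omega_V^{(w)}(\xb_0)\subset\partial S_r$ with $r=1$ exactly when $\omega_V(\xb_0)\neq\emptyset$ and $r<1$ when $\omega_V(\xb_0)=\emptyset$. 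Redirecting your citations to these results repairs the gap, and then the first half of your argument coincides with the paper's own proof, which is exactly the combination of Theorems \ref{omega=omega}, \ref{thm_LLF_B}, \ref{thm_LP_con_PW_<0} and Proposition \ref{thm_main}.

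Your argument for the cardinality claims is correct and genuinely different from the paper's. The tail functionals $\varphi_k(\xb)=\sum_{i\geq k}x_i$ satisfy $\varphi_k(V(\xb))-\varphi_k(\xb)=\sum_{j<k\leq i}a_{ij}x_ix_j$ (the rearrangement is legitimate since $\sum_{i,j}|a_{ij}|x_ix_j\leq 1$), your sign analysis is right, and monotonicity plus boundedness give convergence of $\varphi_k(V^n(\xb_0))$ for every $k$; hence every coordinate $(V^n(\xb_0))_k=\varphi_k(V^n(\xb_0))-\varphi_{k+1}(V^n(\xb_0))$ converges, the whole orbit converges pointwise, and $|\omega_V^{(w)}(\xb_0)|=1$; combined with Theorem \ref{omega=omega} this yields $|\omega_V(\xb_0)|=1$ in case (i). Note that your $\varphi_k$ are Lyapunov but not quasi-Lyapunov in the paper's sense (the tail indicator ${\bf b}$ is not in $c_0$, so $\varphi_k$ is not pointwise continuous); this is harmless because you never evaluate $\varphi_k$ at a weak limit point, only use the numerical sequences $\varphi_k(V^n(\xb_0))$. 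The paper instead proves Proposition \ref{thm_main} by testing two weak limit points against the quasi-Lyapunov functionals $\varphi_{{\bf b}^{(n)}}$ with $b^{(n)}_k=1/k$ for $k\leq n$ and $0$ otherwise. Your route is more direct: it delivers pointwise convergence of the trajectory itself, which is slightly stronger than the singleton statement.
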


In statistical mechanics an ergodic hypothesis proposes a connection
between dynamics and statistics. In the classical theory the assumption
was made that the average time spent in any region of phase space
is proportional to the volume of the region in terms
of the invariant measure, or, more generally, that time averages may be
replaced by space averages.  Therefore, we introduce the following notions.

\begin{defn}
 A q.s.o. $V$ is called
 \begin{enumerate}
  \item[$(i)$] ergodic at $\xb_0\in S$ if the limit
$$
\lim\limits_{n\to\infty}\frac{1}{n}\sum_{n=0}^\infty V^{n}(\xb_0)
$$ exists in $\ell^1$-norm.\\
\item[$(ii)$] weak ergodic at point $\xb_0\in S$ if the limit
$$
\lim\limits_{n\to\infty}\frac{1}{n}\sum_{n=0}^\infty V^{n}(\xb_0)
$$ exists in pointwise convergence.
\end{enumerate}
\end{defn}

On the basis of numerical calculations, Ulam \cite{ulam1} conjectured that an ergodic theorem
holds for any q.s.o. $V$ on finite dimensional simplex. Afterwards, Zakharevich \cite{zakh}
proved that in general this conjecture is false.
In the present paper, we prove that the existence of q.s.o. $V$ on infinite
dimensional simplex for which the ergodic theorem does not hold. Namely, we will prove the following theorem.

\begin{thm}\label{asos4}
Let $V\in\mathcal V^+\cup\mathcal V^-$. Then for any
$\xb_0\in S$ the following statements hold:
\begin{enumerate}
\item[$(i)$] $V$ is weak ergodic at point
$\xb_0$;
\item[$(ii)$] if $V\in\mathcal V^+$ then $V$ is ergodic at point $\xb_0$;\\
\item[$(iii)$] if $V\in\mathcal V^-$ then is ergodic at point $\xb_0$ iff $\omega_V(\xb_0)\neq\emptyset$.
\end{enumerate}
\end{thm}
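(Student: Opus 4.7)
The strategy is to reduce every claim to the orbit convergence already supplied by Theorem \ref{thm_asos3} and invoke the classical fact that if a sequence $(y_n)$ in a topological vector space converges to $y$, then so do its Cesàro averages $\frac1n\sum_{k=0}^{n-1}y_k$, in the same topology; we apply this in $(\ell^{1},\|\cdot\|_{1})$ and in the product topology on $\ell^{1}$. Set $\bar V_n(\xb_0):=\frac1n\sum_{k=0}^{n-1}V^{k}(\xb_0)$.

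With this reduction, parts $(i)$ and $(ii)$ become essentially a dictionary lookup. If $V\in\mathcal V^{+}$, Theorem \ref{thm_asos3}$(i)$ gives $|\omega_{V}(\xb_0)|=1$, so the orbit converges in $\ell^{1}$-norm and hence $\bar V_n(\xb_0)$ converges in $\ell^{1}$-norm and, a fortiori, pointwise; this proves $(ii)$ and $(i)$ in the $\mathcal V^{+}$ case. If $V\in\mathcal V^{-}$, Theorem \ref{thm_asos3}$(ii)_{c}$ gives $|\omega_{V}^{(w)}(\xb_0)|=1$, so the orbit converges coordinatewise to a single limit and the Cesàro averages do too, completing $(i)$.

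The substance is part $(iii)$. For the \emph{if} direction, $\omega_{V}(\xb_0)\neq\emptyset$ combined with Theorem \ref{thm_asos3}$(ii)_{a}$ and $(ii)_{c}$ forces $\omega_{V}(\xb_0)$ to be a singleton $\{\xb^{*}\}$; then $V^{n}(\xb_0)\to\xb^{*}$ in $\ell^{1}$-norm and the Cesàro averages inherit this convergence. For the \emph{only if} direction, assume $\omega_{V}(\xb_0)=\emptyset$. By Theorem \ref{thm_asos3}$(ii)_{b}$--$(ii)_{c}$, there exist $r<1$ and $\xb^{*}\in\partial S_{r}$ with $V^{n}(\xb_0)\to\xb^{*}$ pointwise. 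Suppose, for the sake of contradiction, that $\bar V_n(\xb_0)\to\yb$ in $\ell^{1}$. Since $\ell^{1}$-convergence implies coordinatewise convergence and the Cesàro average of a pointwise convergent sequence has the same pointwise limit, we obtain $\yb=\xb^{*}$ and hence $\|\yb\|_{1}=r$. On the other hand, $\bar V_n(\xb_0)\in S$ for every $n$ and $\ell^{1}$-convergence preserves the $\ell^{1}$-norm, so $\|\yb\|_{1}=\lim_n\|\bar V_n(\xb_0)\|_{1}=1$, contradicting $r<1$. The only genuine obstacle is this final step: it rules out the possibility that the mass $1-r$ that the orbit loses at infinity could be recovered by averaging, and the argument rests precisely on continuity of the $\ell^{1}$-norm along $\ell^{1}$-convergent sequences, so that a pointwise limit lying in $\partial S_{r}$ with $r<1$ cannot be the $\ell^{1}$-limit of any sequence in $S$.
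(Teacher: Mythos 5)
Your proposal is correct and follows essentially the same route as the paper: it reduces all three parts to the singleton omega-limit sets (and the identity $\omega_V=\omega_V^{(w)}$ when nonempty) furnished by Theorem \ref{thm_asos3}, passes to Ces\`aro averages, and in the only-if direction of $(iii)$ derives the same contradiction between the pointwise limit lying on $\partial S_r$ with $r<1$ and the fact that an $\ell^1$-limit of points of $S$ must have norm $1$ (Lemma \ref{lem1}/\ref{lem4}). The only cosmetic difference is that you cite Theorem \ref{thm_asos3} wholesale where the paper cites its ingredients (Theorem \ref{omega=omega}, Proposition \ref{thm_main}), which changes nothing logically.
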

Thanks to Theorem \ref{asos4}  we infer that  $\omega_V(\xb_0)\neq\emptyset$ then
$V$ is weak ergodic, but not ergodic (w.r.t. $\ell^1$-norm) at that point, while it is weak ergodic.  This is an essential difference between finite and infinite dimensional  settings.
For an explicit example we refer
to Example  \ref{asexample}.

The paper is organized as follows: in section 2, we provide some auxiliary facts on pointwise converges and its relation to $\ell^1$-norm convergence. Moreover, we prove that the unit ball in $\ell^1$ is sequentially weak compact which allows us the further investigation of the limiting set $\omega_V^{(w)}$. Section 3 is devoted to certain properties of Volterra quadratic stochastic operators defined on $S$. In Section 4,  we investigate linear Lyapunov functions for Volterra operators taken from the classes $\mathcal{V}^+$ and $\mathcal{V}^-$.
Furthermore, in Section 5, omega limiting sets of operators taken from the classes $\mathcal{V}^+$ and $\mathcal{V}^-$ are studied. In Section 6 (resp. Section 7) based on results of sections 4 and 5, we investigate dynamics of operators taken from the class $\mathcal{V}^+$ (resp.  $\mathcal{V}^-$).  Finally, Section 8 is devoted to the proof of main results of the present paper.

\section{Pointwise convergence on $\ell^1$}

In this section is devoted to some properties of point-wise convergence in $\ell^1$.

In what follows, as usual,  $\ell^1$  denotes  the space of all absolutely summable sequences with the norm
$\norm{\xb}=\sum\limits_{k=1}^{\infty}\abs{x_k}.$

For a given $r>0$ we denote
$$
{\bf B}_r^+=\{\xb\in\ell^1:\ x_k\geq0\ \mbox{for all } k\in\mathbb N,\ \norm{\xb}\leq r\}
$$
and
$$
S_r=\{\xb\in{\bf B}_r^+: \norm{\xb}=r\}.
$$
In the sequel, the unit sphere $S_1$ is called
{\it  an infinite dimensional simplex}.  Furthermore, for the sake of simplicity, we write $S$ instead of $S_1$.

It is known that $S=convh(Extr S)$, where $Extr(S)$ is the extremal points of $S$ and
$convh(A)$ is the convex hall of a set $A$.
Any extremal point of $S$ has the following form:
\[ \eb_k=(\underbrace{0,\dots,0,1}_{k},,0,\dots ), \ \ k\in \bn\].

Here and henceforth we denote
$$
riS_r = \left\{\xb\in S_r : x_{k}>0,\ k\in\mathbb N\right\}, \ \ \partial S_r = S_r\setminus riS_r.
$$
Let $\{\xb^{(n)}\}$ be a sequence in $\ell_1$. In what follows we write
$\xb^{(n)}\stackrel{\norm{\cdot}}{\longrightarrow}\ab$ instead of
$\norm{\xb^{(n)}-\ab}\rightarrow0$.

\begin{rem}
Note that for any $r>0$ the sets $S_r, B_r$ are  not compact w.r.t. $\ell^1$-norm. In the finite dimensional setting, analogues of these sets are
compact, and hence, the investigation of the dynamics of nonlinear mappings over these kind of sets use well-known methods and techniques of dynamical systems. In our case, the non compactness (w.r.t. $\ell^1$-norm) of the set ${\bf B}_r^+$ complicates our further investigation on dynamics of Volterra operators.
Therefore, we need such a weak topology on  $\ell^1$ so that the set ${\bf B}_r^+$
would be compact with respect to that topology.
\end{rem}

One of weak topologies on $\ell^1$ is the Tychonov topology which generates the pointwise convergence.  We say that
a sequence $\{\xb^{(n)}\}\subset\ell^1$ converges {\it pointwise} to $\xb=(x_1,x_2,\dots)\in\ell^1$ if
$$\lim_{n\to\infty}x^{(n)}_k=x_k\ \ \ \mbox{for every } k\geq1.$$
and write $\xb^{(n)}\stackrel{\mathrm{p.w.}}{\longrightarrow}\xb$.

\begin{rem}\label{conv1}
We notice that the set $\ell^1$ is not closed w.r.t. pointwise topology,
and its completion is $s$ which is the space of all sequences. It is known that this topology is metrizable by the following metric:
\begin{equation}\label{rr}
\rho(\ab,{\bf b})=\sum_{k=1}^\infty2^{-k}\frac{|a_k-b_k|}{1+|a_k-b_k|},\ \ \ \ab,{\bf b}\in s.
\end{equation}
Hence,  for a given sequence $\{\xb^{(n)}\}\subset s$ the following statements are equivalent:
\begin{enumerate}
\item[$(i)$] $\xb^{(n)}\stackrel{\mathrm{p.w.}}{\longrightarrow}\xb$;\\
\item[$(ii)$] $\xb^{(n)}\stackrel{\rho}{\longrightarrow}\xb$.
\end{enumerate}
In the sequel, we will show that the unit ball
of $\ell^1$ is compact w.r.t. pointwise convergence, while
whole $\ell^1$ is not closed in $s$.
\end{rem}

We recall that $\ell^\infty$ is defined to be the space of all bounded sequences endowed with the norm
$$
\norm{\xb}_{\infty }=\sup_{n}\{|x_{n}|\}.
$$
By $c_0$ we, as usual, denote the space of all null sequences,
which is a closed subspace of $\ell^\infty$.

The following lemmas play a crucial role in our further investigations.
\begin{lem}\label{lem1}
Let $\{\xb^{(n)}\}\subset S_r$, for some $r>0$. If $\xb^{(n)}\stackrel{\norm{\cdot}}{\longrightarrow}\ab$, then $\ab\in S_r$.
\end{lem}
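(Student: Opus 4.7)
The plan is to verify directly the two defining conditions of $S_r$ for the limit $\ab$: coordinatewise nonnegativity, and $\|\ab\|=r$. Since $\xb^{(n)}\stackrel{\|\cdot\|}{\longrightarrow}\ab$ automatically places $\ab$ in $\ell^1$, only these two membership conditions remain to be checked, and both follow from standard continuity-style arguments.

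First I would observe that $\ell^1$-norm convergence implies coordinatewise (pointwise) convergence, because for each fixed $k\in\mathbb N$ one has the trivial bound $|x^{(n)}_k-a_k|\leq\sum_{j=1}^\infty|x^{(n)}_j-a_j|=\|\xb^{(n)}-\ab\|\to 0$. Hence $x^{(n)}_k\to a_k$ for every $k$. Since $x^{(n)}_k\geq 0$ for all $n$ and weak inequalities pass to pointwise limits, I conclude $a_k\geq 0$ for every $k\in\mathbb N$.

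Next I would invoke the reverse triangle inequality $\bigl|\|\xb^{(n)}\|-\|\ab\|\bigr|\leq\|\xb^{(n)}-\ab\|$ to conclude that $\|\ab\|=\lim_n\|\xb^{(n)}\|=r$, using the assumption $\xb^{(n)}\in S_r$ for all $n$. Combining this with the nonnegativity from the previous step gives $\ab\in S_r$, as required.

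There is essentially no main obstacle here; the lemma is a routine closedness statement for the positive sphere in $\ell^1$, and I anticipate the write-up to consist of just the two short displays above. The only thing worth flagging is the contrast with pointwise convergence, where the analogous statement fails (mass can escape to infinity and $\|\ab\|<r$ becomes possible) — this distinction is precisely what motivates the later sections of the paper and justifies stating the lemma in this norm-convergence form.
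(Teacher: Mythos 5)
Your proposal is correct and follows essentially the same route as the paper: the paper's one-line proof rests on the observation that $\norm{\xb-\yb}\geq|r-\rho|$ for $\xb\in S_r$, $\yb\in S_\rho$ (which is just the reverse triangle inequality you invoke), combined with norm convergence. Your write-up is in fact slightly more complete, since you explicitly check coordinatewise nonnegativity of the limit (via $|x^{(n)}_k-a_k|\leq\norm{\xb^{(n)}-\ab}$), a point the paper's argument leaves implicit.
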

\begin{proof}
It is easy to check that $\norm{\xb-\yb}\geq|r-\rho|$, $\forall\xb\in S_r,\ \forall\yb\in S_\rho$.
This fact together with $\xb^{(n)}\stackrel{\norm{\cdot}}{\longrightarrow}\ab$ yields that $\ab\in S_r$.
\end{proof}
%Recall that, finite dimensional simplex is defined by:
%\[ S^{n-1} = \left\{\textbf{x}=(x_1,x_2, \dots ,x_n)\in \mathbb{R}^{n}\quad |x_i\geq 0,\quad \sum\limits_{i=1}^{n}x_i = 1\right\}. \]

%Throughout this thesis we would consider the infinite simplex as:
%\begin{eqnarray}
%	S = \left\{\textbf{x}=(x_1,x_2, \dots ,x_n, \dots)\in \mathbb{R}^{\infty}\quad |x_i\geq0,\quad \sum\limits_{i=1}^{\infty}x_i = 1\right\}
%\end{eqnarray}

\begin{prop}\label{Far}
The set ${\bf B}_1^+$ is sequentially compact w.r.t. the pointwise convergence.
\end{prop}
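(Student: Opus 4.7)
The plan is to extract a pointwise convergent subsequence by a standard diagonal argument and then verify that the pointwise limit still lies in $\mathbf{B}_1^+$ by a Fatou-type estimate.

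First, given any sequence $\{\xb^{(n)}\} \subset \mathbf{B}_1^+$, I note that for each fixed coordinate $k$ the scalar sequence $\{x_k^{(n)}\}_{n\geq 1}$ lies in the compact interval $[0,1]$, since $0 \leq x_k^{(n)} \leq \norm{\xb^{(n)}} \leq 1$. By Bolzano--Weierstrass applied to $k=1$, I extract a subsequence along which $x_1^{(n)}$ converges; then from that subsequence I extract a further subsequence along which $x_2^{(n)}$ converges, and so on. The Cantor diagonal subsequence $\{\xb^{(n_j)}\}$ then satisfies $x_k^{(n_j)} \to a_k$ as $j\to\infty$ for every $k\in\bn$, where $a_k \in [0,1]$.

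Next, I must verify that the candidate limit $\ab = (a_1,a_2,\dots)$ actually belongs to $\mathbf{B}_1^+$. Non-negativity $a_k\geq 0$ is immediate. For the norm bound, fix any $N\geq 1$. Since only finitely many coordinates are involved,
\[
\sum_{k=1}^{N} a_k \;=\; \lim_{j\to\infty}\sum_{k=1}^{N} x_k^{(n_j)} \;\leq\; \limsup_{j\to\infty}\norm{\xb^{(n_j)}} \;\leq\; 1.
\]
Letting $N\to\infty$ yields $\norm{\ab} = \sum_{k=1}^{\infty} a_k \leq 1$, so in particular $\ab \in \ell^1$ and $\ab \in \mathbf{B}_1^+$. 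By construction $\xb^{(n_j)} \stackrel{\mathrm{p.w.}}{\longrightarrow} \ab$.

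The only subtle point is that pointwise convergence does not preserve the $\ell^1$-norm in general (mass can escape to infinity), which is why the inclusion $\norm{\ab}\leq 1$ is only an inequality rather than an equality; this is precisely the reason the analogous statement fails for the sphere $S_r$ but survives for the ball $\mathbf{B}_1^+$. No metrizability considerations are needed, though one could equivalently phrase the argument via the metric $\rho$ of \eqref{rr} using Remark~\ref{conv1}. The diagonal extraction is the workhorse; the Fatou-style inequality controlling $\norm{\ab}$ is the one place where care is required.
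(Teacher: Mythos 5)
Your proof is correct, but it takes a genuinely different route from the paper. You argue directly: each coordinate sequence lives in $[0,1]$, a Cantor diagonal extraction produces a pointwise convergent subsequence, and a Fatou-type inequality on the partial sums $\sum_{k=1}^{N}a_k\leq\limsup_j\norm{\xb^{(n_j)}}\leq 1$ shows the limit stays in ${\bf B}_1^+$. The paper instead invokes duality and Alaoglu's theorem: since $c_0^{*}=\ell^1$, the ball ${\bf B}_1$ is $\sigma(\ell^1,c_0)$-compact; the identity map into the pointwise (Tychonov) topology is continuous, the metric $\rho$ of \eqref{rr} makes that topology metrizable so compactness upgrades to sequential compactness, and then a separate $\varepsilon$-argument shows ${\bf B}_1^+$ is pointwise-closed in ${\bf B}_1$. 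Your approach is more elementary and self-contained --- no duality, no Alaoglu, no metrizability needed, since the diagonal argument produces the convergent subsequence by hand --- and your closing observation that only the inequality $\norm{\ab}\leq 1$ survives (mass escaping to infinity) is exactly the phenomenon the paper exploits later when $\omega_V(\xb_0)=\emptyset$. What the paper's route buys is the identification of pointwise convergence on the ball with the weak-$*$ topology $\sigma(\ell^1,c_0)$, which ties Proposition~\ref{Far} to the functional-analytic framework (e.g.\ Lemma~\ref{lem5} on pointwise continuity of $\varphi_{\bf b}$ for ${\bf b}\in c_0$) used throughout the rest of the paper.
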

\begin{proof}
First, we show a sequential compactness of  ${\bf B}_1=\{\xb\in\ell^1: \norm{\xb}\leq1\}$.
Thanks to $c_0^*=\ell^1$, the Alaoglu's Theorem implies that ${\bf B}_1$ is a $\sigma(\ell^1,c_0)$-weak compact.
By $\tau$ we denote the pointwise convergence topology in $\ell^1$. Now define a mapping
$
T:(\ell^1,\sigma)\to(\ell^1,\tau)
$
by $T(x)=x$.
One can check that $T$ is continuous, since $\sigma(\ell^1,c_0)$-weak convergence
implies pointwise convergence.
Hence, $T({\bf B}_1)$ is compact.

On the other hand, the metrizability of $\tau$ (see Remark \ref{conv1}) yields the sequential compactness of ${\bf B}_1$.

Now we show that ${\bf B}_1^+$ is closed w.r.t. pointwise convergence.
Let $\{\xb^{(n)}\}\subset {\bf B}_1^+$ such that $\xb^{(n)}\to\ab$, $\ab=(a_1,a_2,\dots)$.
It is clear that $a_k\geq0$ for all $k\geq1$. This yields that either $\ab\in {
\bf B}_\rho^+$ for some $\rho>0$
or $\ab\not\in\ell_1$.

Suppose that $\ab\not\in{\bf B}_1^+$.
Without lost of generality we may assume that
$$
\sum_{k=1}^{\infty}a_k\geq\rho,\ \ \mbox{for some }\ \rho>1.
$$
Then for any $\varepsilon>0$
there exists an $m\in\bn$ such that
\begin{equation}\label{1}
\sum_{k=1}^{m}a_k>\rho-\frac{\varepsilon}{2}
\end{equation}
On the other hand, from $\lim\limits_{n\to\infty}x^{(n)}_k=a_k,\ k\geq1$, we can find an $n_0\in\bn$ such that
$$
|x^{(n)}_k-a_k|<\frac{\varepsilon}{2m},\ k\in\{1,\dots,m\},\ \  \forall n>n_0.
$$
The last one implies
$$
x_k^{(n)}>a_k-\frac{\varepsilon}{2m},\ k\in\{1,\dots,m\},\  \ \  \textrm{for all} \ \  n>n_0.
$$
Hence,
\begin{eqnarray*}
\sum_{k=1}^mx_k^{(n)}&>&\sum_{k=1}^m\left(a_k-\frac{\varepsilon}{2m}\right)\\[2mm]
&=&\sum_{k=1}^ma_k-\frac{\varepsilon}{2}, \ \  \textrm{for all} \ \  n>n_0.
\end{eqnarray*}
So,  the last inequality  together with  \eqref{1} implies
\begin{equation}\label{eq2}
\sum_{k=1}^mx_k^{(n)}>\rho-\varepsilon,\ \ \forall n>n_0.
\end{equation}
We know that for all $n\geq1$ one has $\sum_{k=1}^\infty x_k^{(n)}\leq1$ which together with \eqref{eq2}
implies $\varepsilon>\rho-1$. This contradicts to the arbitrariness of $\varepsilon$.
So, we conclude that $\ab\in{\bf B}_1^+$.

Consequently, as a closed subset of sequential compact set ${\bf B}_1$,
the set ${\bf B}_1^+$ is also sequentially compact. This completes the proof.
\end{proof}

It is clear that $\xb^{(n)}\stackrel{\norm{\cdot}}{\longrightarrow}\ab$ implies
$\xb^{(n)}\stackrel{\mathrm{p.w.}}{\longrightarrow}\ab$.
A natural question arises: is there any equivalence criteria for these two types of convergence on some set?
Next result gives a positive
answer to this question.
\begin{lem}\label{lem4}
Let $\{\xb^{(n)}\}$ be a sequence on $S_r$. Then the following statements are equivalent:
\begin{enumerate}
\item[(1)] $\xb^{(n)}\stackrel{\norm{\cdot}}{\longrightarrow}\ab$ and $\ab\in S_r$;\\
\item[(2)] $\xb^{(n)}\stackrel{\mathrm{p.w.}}{\longrightarrow}\ab$ and $\ab\in S_r$.
 \end{enumerate}
\end{lem}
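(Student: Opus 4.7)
The plan is to dispatch $(1)\Rightarrow(2)$ as a triviality and focus the real work on $(2)\Rightarrow(1)$, which is an $\ell^1$-version of Scheff\'e's lemma adapted to the sphere $S_r$.

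For the easy direction, norm convergence in $\ell^1$ dominates coordinatewise convergence, since $|x_k^{(n)}-a_k|\leq\norm{\xb^{(n)}-\ab}$ for each $k$; combined with the assumption $\ab\in S_r$ this yields $(2)$ immediately.

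For $(2)\Rightarrow(1)$, I would fix $\eps>0$ and proceed in three steps. First, using $\norm{\ab}=r$, pick $N\in\bn$ such that $\sum_{k>N}a_k<\eps$, so $\sum_{k\leq N}a_k>r-\eps$. Second, use the pointwise hypothesis on the finite block $\{1,\dots,N\}$ to find $n_0$ with $\sum_{k\leq N}|x_k^{(n)}-a_k|<\eps$ for all $n\geq n_0$. Third---and this is the crucial step where mass conservation enters---use $\norm{\xb^{(n)}}=r$ to transfer control from the head to the tail: from
\[
\sum_{k\leq N}x_k^{(n)}>\sum_{k\leq N}a_k-\eps>r-2\eps
\]
one gets $\sum_{k>N}x_k^{(n)}<2\eps$. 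Splitting $\norm{\xb^{(n)}-\ab}$ at index $N$ and bounding the tail via $|x_k^{(n)}-a_k|\leq x_k^{(n)}+a_k$ (nonnegativity of both sequences) yields $\norm{\xb^{(n)}-\ab}\leq 4\eps$, which closes the argument.

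I do not expect a genuine obstacle, but the step I would flag is exactly the mass-conservation transfer in the third step: the hypothesis $\ab\in S_r$ (rather than merely $\ab\in\mathbf{B}_r^+$) is indispensable, since it is what makes the $\ab$-tail small and, via $\norm{\xb^{(n)}}=r$, forces the $\xb^{(n)}$-tail to be small as well. Without this equality of total masses, mass can escape to infinity along the sequence---as exhibited by $\eb_n\stackrel{\mathrm{p.w.}}{\longrightarrow}\mathbf{0}$ while $\norm{\eb_n}=1$---so the equivalence would outright fail on the ball but not the sphere.
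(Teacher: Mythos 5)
Your proposal is correct and follows essentially the same route as the paper's proof: choose a finite head block capturing most of the mass of $\ab$, control that block via pointwise convergence, use the norm equality $\norm{\xb^{(n)}}=r=\norm{\ab}$ to force the tails of the $\xb^{(n)}$ to be small, and then split the $\ell^1$-norm at the cutoff (the paper only differs in bookkeeping, using $\eps/4$-type constants to land exactly on $\eps$, and it omits the trivial direction just as you do). Your closing remark about why $\ab\in S_r$ is indispensable matches the role of the counterexample $\eb_n\stackrel{\mathrm{p.w.}}{\longrightarrow}\mathbf{0}$ implicit in the paper's surrounding discussion.
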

\begin{proof} It is enough to prove the implication $(2)\Rightarrow(1)$. Let
$\xb^{(n)}\stackrel{\mathrm{p.w.}}{\longrightarrow}\ab$ and $\ab\in S_r$. Pick any positive number $\varepsilon$.
Since $\norm{\ab}=r$, there exists an integer $m\geq1$ such that
\begin{equation}\label{l4eq1}
\sum_{k=1}^ma_k>r-\frac{\varepsilon}{4}.
\end{equation}
The convergence $\xb^{(n)}\stackrel{\mathrm{p.w.}}{\longrightarrow}\ab$ implies the existence of
an integer $n_0\geq1$
such that
\begin{equation}\label{l4eq2}
\left|x^{(n)}_k-a_k\right|<\frac{\varepsilon}{4m},\ \ k\in\{1,\dots.m\}\ \ \ \ \forall n>n_0.
\end{equation}
From \eqref{l4eq2} using \eqref{l4eq1} we obtain
\begin{equation}\label{yulduzcha}
\sum_{k=1}^mx^{(n)}_k>\sum_{k=1}^m\left(a_k-\frac{\varepsilon}{4m}\right)>r-\frac{\varepsilon}{2},\ \ \forall n>n_0.
\end{equation}
Due to $\norm{\xb^{(n)}}=r$ for any $n\geq1$, from \eqref{yulduzcha} one gets
\begin{equation}\label{l4eq3}
\sum_{k=m+1}^\infty x^{(n)}_k<\frac{\varepsilon}{2},\ \ \ \forall n>n_0.
\end{equation}
Hence, using \eqref{l4eq1}-\eqref{l4eq3} we have
\begin{eqnarray*}
\norm{\xb^{(n)}-\ab}&=&\sum_{k\leq m}\left|x^{(n)}_k-a_k\right|+\sum_{k>m}\left|x^{(n)}_k-a_k\right|\\
&\leq&\sum_{k\leq m}\left|x^{(n)}_k-a_k\right|+\sum_{k>m}\left|x^{(n)}_k\right|+\sum_{k>m}\left|a_k\right|\\
&<&\frac{\varepsilon}{4}+\frac{\varepsilon}{2}+\frac{\varepsilon}{4}=\varepsilon,\ \ \ \ \ \forall n>n_0,
\end{eqnarray*}
which means that $\xb^{(n)}\stackrel{\norm{\cdot}}{\longrightarrow}\ab$. This completes the proof.
\end{proof}

Recall that a
functional $\varphi:\ell^1\to\mathbb R$ is called {\it pointwise continuous} if
for any $\ab\in\ell^1$ and any sequence $\{\xb^{(n)}\}\subset\ell^1$ with $\xb^{(n)}\stackrel{\mathrm{p.w.}}{\longrightarrow}\ab$
one has
$\varphi(\xb^{(n)})\rightarrow\varphi(\ab)$.

Now we provide a criteria for linear functionals to be pointwise continuous.

Given ${\bf b}\in\ell^\infty$, let us define
\begin{equation}\label{uffff}
\varphi_{\bf b}(\xb)=\sum_{k=1}^\infty b_kx_k,\ \ \ \ \xb\in\ell^1.
\end{equation}
\begin{lem}\label{lem5}
Let ${\bf b}\in\ell^\infty$, then the linear functional $\varphi_{\bf b}$
is pointwise continuous on ${\bf B}_1^+$ iff ${\bf b}\in c_0$.
\end{lem}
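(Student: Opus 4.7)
The plan is to prove the two implications separately, with the main work on each side being a standard but careful splitting/contradiction argument.

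For the sufficiency direction ($\mathbf{b}\in c_0 \Rightarrow \varphi_{\mathbf{b}}$ is pointwise continuous on $\mathbf{B}_1^+$), I would fix $\ab\in\mathbf{B}_1^+$ and a sequence $\{\xb^{(n)}\}\subset\mathbf{B}_1^+$ with $\xb^{(n)}\stackrel{\mathrm{p.w.}}{\longrightarrow}\ab$, then estimate $|\varphi_{\mathbf{b}}(\xb^{(n)})-\varphi_{\mathbf{b}}(\ab)|$ by the standard tail-plus-head split. Given $\varepsilon>0$, use $\mathbf{b}\in c_0$ to pick $m$ with $|b_k|<\varepsilon/4$ for $k>m$. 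Since both $\xb^{(n)}$ and $\ab$ lie in $\mathbf{B}_1^+$, one has $\sum_{k>m}(x_k^{(n)}+a_k)\le 2$, so the tail contribution is bounded by $\varepsilon/2$. Then use pointwise convergence to bound the finite head $\sum_{k\le m}|b_k||x_k^{(n)}-a_k|$ by $\varepsilon/2$ for $n$ large enough.

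For the necessity direction ($\varphi_{\mathbf{b}}$ pointwise continuous on $\mathbf{B}_1^+\Rightarrow \mathbf{b}\in c_0$), I would argue by contrapositive. Suppose $\mathbf{b}\notin c_0$; then there exist $\delta>0$ and indices $k_1<k_2<\cdots$ with $|b_{k_n}|\ge\delta$. Test continuity on the concrete sequence $\xb^{(n)}=\eb_{k_n}$, which lies in $S\subset\mathbf{B}_1^+$. Since each coordinate of $\xb^{(n)}$ is eventually zero, $\xb^{(n)}\stackrel{\mathrm{p.w.}}{\longrightarrow}\mathbf 0$, yet $\varphi_{\mathbf{b}}(\xb^{(n)})=b_{k_n}$ does not tend to $\varphi_{\mathbf{b}}(\mathbf{0})=0$. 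This contradicts pointwise continuity at $\mathbf{0}$.

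Neither direction is delicate: the sufficiency is a routine $\varepsilon/3$-style split which works because both target and iterates live in a bounded set of $\ell^1$, and the necessity exploits that the standard basis $\{\eb_k\}$ is pointwise null but bounded in $\ell^1$. If I had to nominate an obstacle, it is only the small bookkeeping point that $\mathbf{0}\in\mathbf{B}_1^+$ (needed so that the test sequence in the necessity direction has a legitimate pointwise limit in the set on which continuity is asserted); this is immediate from the definition of $\mathbf{B}_1^+$.
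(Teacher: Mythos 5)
Your proof is correct and follows essentially the same route as the paper: the sufficiency is the same head/tail split using $\mathbf{b}\in c_0$ for the tail and pointwise convergence for the finite head, and the necessity tests continuity on the basis vectors $\eb_k$, which converge pointwise to $\mathbf{0}$ (the paper argues this direction directly rather than by contrapositive, but that is an immaterial difference).
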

\begin{proof}
Assume that $\varphi_{\bf b}$ is a pointwise continuous. Consider the sequence
$\{\eb_n\}$ for which one has $\eb_n\stackrel{\mathrm{p.w.}}{\longrightarrow}\bf{0}$,
where ${\bf{0}}=(0,0,\dots)$.
From $\varphi_{\bf b}(\eb_n)=b_n$, $\varphi_{\bf b}({\bf{0}})=0$ and the pointwise continuity of
$\varphi_{\bf b}$ implies $b_n\to0$ as $n\to\infty$.

Now let us suppose that $b_k\to0$ as $k\to\infty$, and take any sequence $\{\xb^{(n)}\}\subset {\bf B}_r^+$ such that
$\xb^{(n)}\stackrel{\mathrm{p.w.}}{\longrightarrow}\xb$.
We will show that $\varphi_{\bf b}(\xb^{(n)})\to\varphi_{\bf b}(\xb)$.
If $\norm{{\bf b}}_\infty=0$ then nothing to proof.
So, we consider $\norm{{\bf b}}_\infty\neq0$.

Take an arbitrary positive number $\varepsilon$. Then there exists an integer $m\geq1$
such that $|b_k|<\frac{\varepsilon}{4r}$ for all $k>m$.
The pointwise convergence $\xb^{(n)}\stackrel{\mathrm{p.w.}}{\longrightarrow}\xb$
 implies the existence of an integer $n_0$ such that
$$
|x^{(n)}_k-x_k|<\frac{\varepsilon}{2\norm{{\bf b}}_\infty},\ \ k\in\{1,\dots,m\},\ \ \ \forall n>n_0.
$$
Consequently, we have
\begin{eqnarray*}
\left|\varphi_{\bf b}(\xb^{(n)})-\varphi_{\bf b}(\xb)\right|&\leq&\left|\sum_{k\leq m}b_k(x_k^{(n)}-x_k)\right|+
\left|\sum_{k>m}b_k(x_k^{(n)}-x_k)\right|\\
&\leq&\sum_{k\leq m}\left|b_k(x_k^{(n)}-x_k)\right|+\sum_{k>m}\left|b_k(x_k^{(n)}-x_k)\right|\\
&\leq&\norm{{\bf b}}_\infty\sum_{k\leq m}\left|x_k^{(n)}-x_k\right|+\frac{\varepsilon}{4r}\sum_{k>m}\left|x_k^{(n)}-x_k\right|\\
&<&\norm{{\bf b}}_\infty\cdot\frac{\varepsilon}{2\norm{{\bf b}}}_\infty+\frac{\varepsilon}{4r}\cdot 2r\\[2mm]
&=&\varepsilon,\ \ \  \textrm{for all} \ \  n>n_0.
\end{eqnarray*}
This yields the desired assertion.
\end{proof}

\section{Volterra Quadratic Stochastic Operators}

Let
$$
S^{d-1}=\left\{\xb=(x_1,x_2,\dots,x_d)\in\mathbb R^d: \sum_{i=1}^dx_i=1,\ x_k\geq0,\ k\in\{1,\dots,d\}\right\}
$$
which is the $(d-1)$-dimensional simplex. Recall that
a {\it quadratic stochastic operator}
$V$ on $S^{d-1}$ is a mapping defined by
\begin{eqnarray}\label{eqn_qso_fd}
	(V(\xb))_{k} = \sum\limits_{i,j=1}^{d}p_{ij,k}x_ix_j\quad , k\in\{1,2,\dots,d\}
\end{eqnarray}
where
\begin{eqnarray}\label{p_ijk}
	p_{ij,k}\geq0, \quad p_{ij,k}=p_{ji,k}, \quad \sum\limits_{k=1}^{d}p_{ij,k} = 1, \quad i,j,k\in\{1,2,\dots,d\}.
\end{eqnarray}

Quadratic stochastic operators were first introduced by Bernstein \cite{1}.
Such operators frequently arise in many models of
mathematical genetics, namely, the theory of heredity \cite{AL,hsbook,11}. To the investigation of quadratic stochastic operators
it was devoted many papers (see for example, \cite{R.gani_tournment,ngrguj,6,MG2015,fish1,kest,20,25,28,29}).

Recall that the operator \eqref{eqn_qso_fd},\eqref{p_ijk} is called {\it Volterra}, if $p_{ij,k}=0$ for any $k\notin\{i,j\}$.
The biological
treatment of such operators is rather clear: the offspring repeats one of its parents.

The following result has been proved in \cite{R.gani_tournment}.

\begin{thm}\label{rg1}\cite{R.gani_tournment}
Let $V$ be a Volterra q.s.o. on $S^{d-1}$. Then the following
statements hold:
\begin{enumerate}
\item[$(i)$] if $\xb_0\in S^{d-1}\setminus Fix(V)$ then $\omega_V(\xb_0)\in\partial S^{d-1}$;
\\
\item[$(ii)$] if $\xb_0\in S^{d-1}$ then $|\omega_V(\xb_0)|=1\vee\infty$.
\end{enumerate}
\end{thm}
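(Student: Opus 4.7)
My approach to both parts of Theorem~\ref{rg1} would center on a Lyapunov function of the form $\phi_\pb(\xb) = \prod_{k=1}^{d} x_k^{p_k}$, built from a fixed point $\pb$ of $V$, together with a reduction to forward-invariant subfaces. The structural input I would exploit first is that each coordinate face $F_I = \{\xb \in S^{d-1} : x_j = 0 \text{ for } j \notin I\}$ is forward-invariant, since $(V\xb)_k = x_k(1 + \sum_i a_{ki}x_i)$ vanishes whenever $x_k = 0$; hence $\operatorname{supp}(V^n\xb_0)$ is non-increasing in $n$ as a subset of $\{1,\dots,d\}$.

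For part~(i), I would first treat the case in which $V$ admits an interior fixed point $\pb$. The fixed-point equation forces $\sum_i a_{ki}p_i = 0$ for every $k$, and by the skew-symmetry $a_{ki}=-a_{ik}$ also $\sum_k a_{ki}p_k = 0$. Using $\log(1+t) \le t$ with strict inequality for $t\ne 0$,
\[
\log\phi_\pb(V\xb) - \log\phi_\pb(\xb) = \sum_k p_k \log\!\Bigl(1 + \sum_i a_{ki}x_i\Bigr) \le \sum_{i,k} a_{ki}p_k x_i = 0,
\]
with equality iff $\xb \in Fix(V)$. The sequence $\phi_\pb(V^n\xb_0)$ is therefore non-increasing and converges to some $\ell \ge 0$. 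If $\ell = 0$, all accumulation points lie on $\partial S^{d-1}$. If $\ell > 0$, then every $\yb \in \omega_V(\xb_0)$ sits in $Fix(V) \cap \operatorname{int}(S^{d-1})$; but weighted AM--GM shows that $\phi_\yb(\xb) < \phi_\yb(\yb)$ for any $\xb\ne\yb$, which combined with the monotonicity of $\phi_\yb \circ V^n$ and $\xb_0 \ne \yb$ contradicts $\phi_\yb(V^{n_k}\xb_0) \to \phi_\yb(\yb)$ along any subsequence realising $\yb$. Hence $\ell = 0$ and $\omega_V(\xb_0) \subset \partial S^{d-1}$. When $V$ has no interior fixed point, I would descend to the minimal face $F_I$ on which $\operatorname{supp}(V^n\xb_0)$ stabilises (which occurs after finitely many steps on the finite alphabet $\{1,\dots,d\}$) and repeat the argument for the Volterra sub-operator $V|_{F_I}$, inducting on dimension.

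For part~(ii), I would rule out periodic orbits of period $p \ge 2$. If $\yb_0, \ldots, \yb_{p-1}$ were such an orbit, the common support $I$ of these points is $V$-invariant, and on $F_I$ the Lyapunov function $\phi_{\qb}$ attached to a relative interior fixed point $\qb$ of $V|_{F_I}$ would have to be simultaneously periodic along the orbit and strictly decreasing off $Fix(V|_{F_I})$, forcing every $\yb_j$ to be a fixed point, contradicting $p \ge 2$. Consequently $|\omega_V(\xb_0)| \in \{1,\infty\}$.

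\textbf{Main obstacle.} The delicate point is the inductive step in which the Volterra sub-operator on the current invariant face has no fixed point in the relative interior of that face. In that situation the Lyapunov construction is unavailable on the current stratum, and one must instead use the combinatorial description of the fixed-point set via the tournament $i\to j \Leftrightarrow a_{ij}>0$ to pinpoint a coordinate that must vanish along the orbit, pushing the trajectory into a strictly smaller face, and then restart the induction there.
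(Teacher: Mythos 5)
The paper does not actually prove Theorem \ref{rg1}; it is imported verbatim from \cite{R.gani_tournment}, so your attempt has to be measured against Ganikhodzhaev's argument rather than anything in this text. The half of your plan that assumes a fixed point $\pb$ in the (relative) interior of the orbit's support face is essentially sound: $\sum_i a_{ki}p_i=0$ for all $k$, skew-symmetry, and $\log(1+t)\le t$ make $\phi_\pb(\xb)=\prod_k x_k^{p_k}$ non-increasing along orbits, and the Gibbs/AM--GM maximality of $\phi_{\yb}$ at $\yb$ rules out an interior limit point; this is indeed one of the two pillars of the known proof (for part (ii) you would additionally need the standard fact that a finite $\omega$-limit set of a single orbit is one periodic cycle, which you use implicitly but do not state).

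The genuine gap is the other pillar, exactly the case you label the ``main obstacle'': when the support face carries no relative-interior fixed point you have no Lyapunov function and no argument, and the reduction you propose to fall back on cannot be carried out. For a Volterra operator the support of $V^n\xb_0$ is \emph{constant} in $n$, not merely non-increasing: if $x_k>0$ then $(V\xb)_k=x_k\bigl(1+\sum_{i\ne k}a_{ki}x_i\bigr)\ge x_k^2>0$, which is precisely Proposition \ref{5vo5}, $V(ri\,\Gamma_I)\subset ri\,\Gamma_I$. Consequently the trajectory of a point never enters a strictly smaller face, no coordinate ``vanishes along the orbit,'' and the induction on dimension you describe is never triggered in the only situation where it would be needed. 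Already for $d=2$ with $a_{12}>0$ there is no interior fixed point, and your scheme --- including the periodic-orbit exclusion in part (ii), which again invokes a relative-interior fixed point $\qb$ of $V|_{F_I}$ --- has nothing to run on, even though the conclusion there is easy by direct inspection. The missing ingredient in Ganikhodzhaev's proof, echoed in Sections 4--7 of the present paper in the infinite-dimensional setting, is the construction of \emph{linear} Lyapunov functions $\varphi_{\bf b}(\xb)=\sum_k b_kx_k$ from the tournament/skew-symmetric data when no interior fixed point exists; until you supply that (or an equivalent mechanism), both statements (i) and (ii) remain unproved in the essential case.
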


\begin{rem}
We stress that due to the finite dimensionality of the simplex
$S^{d-1}$, $d\geq2$, for any q.s.o. $V$ on $S^{d-1}$, $d\geq2$ one has
$\omega_V(\xb_0)=\omega_V^{(w)}(\xb_0)$ for every $\xb_0\in S^{d-1}$. When one
considers an infinite dimensional setting, then the indicated
equality may be violated. Consequently, the investigation
the dynamics of infinite dimensional Volterra operators
become a tricky job.
\end{rem}

In the present paper,
our main aim is to study the dynamics of infinite dimensional
Volterra q.s.o.

Let $V$ be a mapping on the infinite dimensional simplex $S$ defined by
\begin{eqnarray}\label{eqn_qso}
(V(\xb))_{k} = \sum\limits_{i,j=1}^{\infty}p_{ij,k}x_ix_j,\quad k = 1,2,3,\dots
\end{eqnarray}
Here, $\{p_{ij,k}\}$ are the hereditary coefficients which satisfy
\begin{eqnarray}
p_{ij,k}\geq0, \quad p_{ij,k}=p_{ji,k}, \quad \sum\limits_{k=1}^{\infty}p_{ij,k} = 1, \quad i,j,k=1,2,3,\dots.
\end{eqnarray}
It is important to notice that the mapping $V$ is well-defined i.e., $V(S)\subset S$.
Such kind of mapping $V$ is called
 {\it quadratic stochastic operator} (q.s.o.).

By \textit{support} of $\xb = (x_{1},\dots, x_{n}, \dots) $ we mean a set $ supp(\xb) = \left\{ i
\in \bn:\ x_{i} \neq 0 \right\}$.

Likewise as a finite dimensional case a q.s.o. $ V : S \rightarrow S $ is  called {\it Volterra} if
\begin{eqnarray}\label{eqn_cond_Vol}
p_{ij,k} =0 \textmd{ if } k \notin \{i,j\},\ i,j=1,2,3,\dots
\end{eqnarray}

Taking into account \eqref{eqn_qso}, one easily can check that
\eqref{eqn_cond_Vol} is equivalent to \eqref{Vol}.
In \cite{Far_Has_Temir} several properties of infinite dimensional
Volterra operators have been investigated.

In what follows, by $\mathcal V$ we denote the set of all Volterra q.s.o. defined on $S$.
Note that $\mathcal V$
is a convex set.

For any $I\subset\mathbb N$ we define a subset of $S$ as follows
$$
\Gamma_I=\{\xb\in S: x_i=0, i\in I\}.
$$
The subset $\Gamma_I$ is called a {\it face of the simplex}.

Let us recall some known facts for Volterra q.s.o. on $S$.

\begin{prop}\cite{Far_Has_Temir}\label{5vo5}  Let $V\in\mathcal V$. Then
the following assertions hold:
\begin{enumerate}
\item[$(i)$] for every $I\subset\mathbb N$ one has $V(\Gamma_I)\subset\Gamma_I$,
$V(ri\Gamma_I)\subset ri\Gamma_I$;
\item[$(ii)$] $ Extr(S)\subset Fix(V)$;
\item[$(iii)$] $ V(riS) \subset riS $;
\end{enumerate}
\end{prop}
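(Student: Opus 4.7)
The plan is to read off all three assertions directly from the Volterra representation \eqref{Vol} together with the skew-symmetry and boundedness assumptions \eqref{A1}; no compactness or topological input is required. The whole argument is coordinate-wise, based on the factorization $(V(\xb))_k = x_k\bigl(1+\sum_{i=1}^\infty a_{ki}x_i\bigr)$.

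For (i), the inclusion $V(\Gamma_I)\subset\Gamma_I$ is immediate: if $\xb\in\Gamma_I$ and $k\in I$, then $x_k=0$ forces $(V(\xb))_k=0$, so $V(\xb)\in\Gamma_I$. For the interior assertion, I pick $\xb\in ri\Gamma_I$ and $k\notin I$, so $x_k>0$, and then I need to show that the bracketed factor $1+\sum_{i} a_{ki}x_i$ is strictly positive. The key observation is that skew-symmetry gives $a_{kk}=0$, so the sum reduces to $\sum_{i\neq k} a_{ki}x_i$. Combined with $|a_{ki}|\leq 1$ and $\sum_{i\neq k} x_i = 1-x_k$, this yields
\[
\Bigl|\sum_{i=1}^\infty a_{ki}x_i\Bigr|\leq 1-x_k,
\]
so $1+\sum_i a_{ki}x_i\geq x_k>0$, and hence $(V(\xb))_k>0$ for every $k\notin I$, i.e.\ $V(\xb)\in ri\Gamma_I$.

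For (ii), plugging $\xb=\eb_k$ into \eqref{Vol} gives $(V(\eb_k))_j=0$ for $j\neq k$ and $(V(\eb_k))_k = 1\cdot(1+a_{kk})=1$ (again using $a_{kk}=0$), so $V(\eb_k)=\eb_k$. For (iii), specialize (i) to $I=\emptyset$: then $\Gamma_\emptyset=S$ and $ri\Gamma_\emptyset=riS$, and the interior inclusion becomes $V(riS)\subset riS$.

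There is really no hard step here: the only point that requires some care is the sharp estimate $|\sum_i a_{ki}x_i|\leq 1-x_k$, which uses both pieces of \eqref{A1} (the skew-symmetry eliminating the diagonal term, and the bound $|a_{ki}|\leq 1$ together with $\norm{\xb}=1$). Everything else is a direct substitution into the Volterra form.
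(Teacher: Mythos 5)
Your proof is correct: the coordinate-wise argument from the factorization $(V(\xb))_k=x_k\bigl(1+\sum_i a_{ki}x_i\bigr)$, the estimate $\bigl|\sum_i a_{ki}x_i\bigr|\leq 1-x_k$ (using $a_{kk}=0$ and $|a_{ki}|\leq1$), and the specializations $\xb=\eb_k$ and $I=\emptyset$ establish all three assertions. Note that the paper itself gives no proof of this proposition — it is quoted from the reference [Mukhamedov--Akin--Temir] — and your direct verification is the standard argument for it.
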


Here and henceforth, we use $ V^{n}(\xb) $ to denote the
iterations of the given q.s.o. $ V $ at the initial point $ \xb\in S $ i.e.,
$$
V^{n+1}(\xb) = V(V^{n}(\xb)), \ n \in \bn.
$$

Recall that the $\omega$-limit set of a point $\xb_0\in S$  w.r.t. $V$ is
$$
\w_{V}(\xb_0):=\bigcap_{n\geq 1}\overline{\bigcup_{k\geq n}V^k(\xb_0)}^{\|\cdot\|}.
$$
Equivalently, $ \xb^{*} \in \w_{V}(\xb_0) $ means that
there exists a subsequence $ \{ n_{k} \} $ such that
$$
V^{n_{k}}(\xb_0)\stackrel{\norm{\cdot}}{\longrightarrow}\xb^{*},\ \ \ n_k\to\infty.
$$

Now define weak $\omega$-limit set of a point $\xb_0\in S$  w.r.t. $V$ by
$$
\w_{V}^{(w)}(\xb_0):=\bigcap_{n\geq 1}\overline{\bigcup_{k\geq n}V^k(\xb_0)}^{\rho},
$$
here $\rho$ is the metric given by \eqref{rr}.

So, $ \xb^{*} \in \w_{V}^{(w)}(\xb_0) $ means that
there exists a subsequence $ \{ n_{k} \} $ such that
$$
V^{n_{k}}(\xb_0)\stackrel{\mathrm{p.w.}}{\longrightarrow}\xb^{*},\ \ \ n_k\to\infty.
$$

\begin{rem}
The compactness of $S^{d-1}$ implies $\w_V(\xb_0)\neq\emptyset$ for any $\xb_0\in S^{d-1}$.
It turns out that this property is violated in the infinite dimensional setting
 (see Example \ref{asexample}).
If
we consider q.s.o. $V$ on infinite dimensional simplex, then
according to Lemma \ref{Far} one has $\w_V^{(w)}(\xb_0)\neq\emptyset$ for
any $\xb_0\in S$. That
fact gives a motivation in studying relationship between the sets $\w_V(\xb_0)$ and $\w_V^{(w)}(\xb_0)$
\end{rem}

\begin{exm}\label{asexample} Let $(a_{ij})_{i,j\geq1}$ be an infinite dimensional skew-symmetric matrix
such that $a_{ki}=-1$ for all $i>k$. Then the corresponding Volterra operator $V$ belongs to $\mathcal V^-$
and has the following form
$$
(V(\xb))_k=\left\{
\begin{array}{ll}
x_1^2, & \mbox{if }\ k=1,\\
x_k^2+2x_k\sum_{i=1}^{k-1}x_i, & \mbox{if }\ k\geq2.
\end{array}
\right.\ \ \ \ \xb\in S.
$$
From the last expressions, for any $m,n\in\bn$, one gets
\begin{equation}\label{asexameq}
\sum_{k=1}^m(V^{n}(\xb))_k=\left(\sum_{k=1}^mx_k\right)^{2^n}.
\end{equation}
Let us assume that $|supp(\xb)|=\infty$. Then for any $m\geq1$ we have
$\sum_{k=1}^mx_k<1$. Hence,
for any fixed $m\geq1$ from \eqref{asexameq} one has $\sum_{k=1}^mV^{n}(\xb)_k\to0$ as $n\to\infty$.
Consequently, $V^{n}(\xb)\stackrel{\mathrm{p.w.}}{\longrightarrow}{\bf0}$.
Due to ${\bf0}\not\in S$
and Lemma \ref{lem4} one concludes
$\w_V(\xb)=\emptyset$.

Now let us suppose that $|supp(\xb)|=m_0$. Then we have $\sum_{k=1}^{m_0}x_k=1$
and $x_k=0$ for any $k>m_0$. So, from Proposition \ref{5vo5} one has
$(V^n(\xb))_k=0$ for any $n\in\bn$ and $k>m_0$.

On the other hand, from $\sum_{k=1}^{m_0-1}x_k<1$ it follows that
$$
\sum_{k=1}^{m_0}(V^n(\xb))_k=\left(\sum_{k=1}^{m_0}x_k\right)^{2^n}\to0,\ \ \mbox{ as } n\to\infty
$$
It yields
that
\begin{equation}\label{dgfg}
(V^n(\xb))_k\to0,\ \ \mbox{for any } k<m_0
\end{equation}
Finally, the equality
$$
\sum_{k=1}^{m_0}(V^n(\xb))_k=\left(\sum_{k=1}^{m_0}x_k\right)^{2^n}=1
$$
together with \eqref{dgfg} implies $(V^n(\xb))_{m_0}\to1$, which
means that $V^{n}(\xb)\stackrel{\mathrm{p.w.}}{\longrightarrow}{\bf e}_{m_0}$. Hence, by Lemma
\ref{lem4} we obtain $V^{n}(\xb)\stackrel{\norm{\cdot}}{\longrightarrow}{\bf e}_{m_0}$.

Consequently, for any $\xb\in S$ we find that
\begin{equation}\label{mis1}
\w_V(\xb)=\left\{
\begin{array}{ll}
\eb_{m_0}, & \mbox{if }\ \max\{supp(\xb)\}=m_0,\\[2mm]
\emptyset, & \mbox{if }\ |supp(\xb)|=\infty.
\end{array}
\right.
\end{equation}
\begin{equation}\label{mis2}
\w^{(w)}_V(\xb)=\left\{
\begin{array}{ll}
\eb_{m_0}, & \mbox{if }\ \max\{supp(\xb)\}=m_0,\\[2mm]
{\bf0}, & \mbox{if }\ |supp(\xb)|=\infty.
\end{array}
\right.
\end{equation}
From \eqref{mis1} one can see that $V$ is ergodic at $\xb_0\in S$ if $|supp(\xb_0)|<\infty$.

Now we consider a case
$|supp(\xb_0)|=\infty$ and we show that $V$ is not ergodic at $\xb_0$.

Assume that $V$ is ergodic at $\xb_0\in S$ ($|supp(\xb_0)|=\infty$). Then there exists
$\hat{\xb}\in S$ such that
\begin{equation}\label{mis3}
\frac{1}{n}\sum_{k=0}^nV^k(\xb_0)\stackrel{\norm{\cdot}}{\longrightarrow}\hat{\xb},\ \ \mbox{as }n\to\infty.
\end{equation}
Then by Lemma \ref{lem4} we obtain
$$
\frac{1}{n}\sum_{k=0}^nV^k(\xb_0)\stackrel{\mathrm{p.w.}}{\longrightarrow}\hat{\xb},\ \ \mbox{as }n\to\infty.
$$
On the other hand, from \eqref{mis2} it follows that
\begin{equation}\label{mis4}
\frac{1}{n}\sum_{k=0}^nV^k(\xb_0)\stackrel{\mathrm{p.w.}}{\longrightarrow}{\bf 0},\ \ \mbox{as }n\to\infty.
\end{equation}

Hence, \eqref{mis3},\eqref{mis4} implies $\hat{\xb}=\bf0$, which contradicts to $\hat{\xb}\in S$.
So, we infer that $V$ is not ergodic at point $\xb_0\in S$.
\end{exm}

\section{Lyapunov functions for Volterra q.s.o.}

In this section, we construct two types of Lyapunov functions for Volterra q.s.o. with respect to
classes $\mathcal V^+,\mathcal V^-$.

\begin{defn} A $\ell^1$-continuous function $ \varphi: S \rightarrow \br $ is called a
\textit{Lyapunov function} for q.s.o. $V$ if the limit $ \lim\limits_{n
	\rightarrow \infty } \varphi(V^{n}(\xb))$ exists for any initial
point $ \xb\in S$.
\end{defn}
Obviously, if $\varphi$ is Lyapunov function for q.s.o. $V$ and
$\lim\limits_{n \rightarrow \infty } \varphi(V^{n}(\xb_{0}))=\xb^*$,
then $ \w_V (\xb_{0}) \subset \varphi^{-1}(\xb^*)$. Consequently, to determine more
precisely of $\w_V (\xb_{0})$ we should construct as much as possible
Lyapunov functions.

\begin{thm}\label{thm_Ly_da<0}
Let $V\in\mathcal V$ and $\frak{f}(V)=(a_{ij})$ be its corresponding skew-symmetric matrix. Assume that
$ {\bf b}\in \ell^{\infty} $ such that for any pair $(k,i)\in\bn^2$ one has
$ b_{k}a_{ki}\leq 0 $ (resp. $ b_{k}a_{ki}\geq 0 $). Then the functional $\f_{\bf b}$ given by \eqref{uffff} on
$S$
is a Lyapunov function for $V$.
\end{thm}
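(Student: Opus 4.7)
The plan is to show that the sequence $\{\varphi_{\mathbf{b}}(V^n(\mathbf{x}))\}_{n\geq 0}$ is monotone and bounded, hence convergent. First I would verify that $\varphi_{\mathbf{b}}$ is genuinely a candidate Lyapunov function, i.e.\ that it is $\ell^1$-continuous on $S$: the inequality $|\varphi_{\mathbf{b}}(\mathbf{x})-\varphi_{\mathbf{b}}(\mathbf{y})|\leq \|\mathbf{b}\|_\infty\|\mathbf{x}-\mathbf{y}\|$ is immediate from \eqref{uffff}. This also yields the uniform bound $|\varphi_{\mathbf{b}}(\mathbf{z})|\leq \|\mathbf{b}\|_\infty$ for every $\mathbf{z}\in S$, which will later provide the two-sided bound we need for monotone convergence.

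Next I would compute the one-step increment explicitly. Using the Volterra representation \eqref{Vol},
$$
\varphi_{\mathbf{b}}(V(\mathbf{x}))-\varphi_{\mathbf{b}}(\mathbf{x})
=\sum_{k=1}^\infty b_k x_k\Bigl(1+\sum_{i=1}^\infty a_{ki}x_i\Bigr)-\sum_{k=1}^\infty b_kx_k
=\sum_{k,i=1}^\infty b_k a_{ki}x_kx_i.
$$
Before manipulating this double sum I would justify its absolute convergence: since $|a_{ki}|\leq 1$ by \eqref{A1}, one has $|b_k a_{ki}x_k x_i|\leq \|\mathbf{b}\|_\infty x_k x_i$, and $\sum_{k,i}x_kx_i=(\sum_k x_k)^2=1$. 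Hence the double series converges absolutely and may be summed in any order.

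Now I would apply the sign hypothesis termwise. Suppose $b_k a_{ki}\leq 0$ for every $(k,i)$. Since $x_k,x_i\geq 0$, every summand $b_ka_{ki}x_kx_i$ is $\leq 0$, so
$$
\varphi_{\mathbf{b}}(V(\mathbf{x}))-\varphi_{\mathbf{b}}(\mathbf{x})\leq 0,\qquad \mathbf{x}\in S.
$$
Applying this at $\mathbf{x}=V^n(\mathbf{x}_0)$ shows that $n\mapsto \varphi_{\mathbf{b}}(V^n(\mathbf{x}_0))$ is nonincreasing; combined with the lower bound $-\|\mathbf{b}\|_\infty$, the limit $\lim_{n\to\infty}\varphi_{\mathbf{b}}(V^n(\mathbf{x}_0))$ exists. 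The symmetric case $b_ka_{ki}\geq 0$ gives a nondecreasing sequence bounded above by $\|\mathbf{b}\|_\infty$, and again the limit exists.

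No step is really hard here; the only place where care is required is the termwise sign argument, because it relies on writing the increment as an unordered double sum rather than as an iterated series. The absolute-convergence estimate above is what licenses that rearrangement, so I would make sure to state it explicitly before using the pointwise sign condition, rather than appealing to any cancellation coming from the skew-symmetry of $(a_{ki})$ (which is not what is driving the argument).
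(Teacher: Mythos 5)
Your proposal is correct and follows essentially the same route as the paper's proof: expand $\f_{\bf b}(V(\xb))$ via \eqref{Vol}, verify the absolute convergence of the double series $\sum_{k,i} b_k a_{ki} x_k x_i$ using $|a_{ki}|\leq 1$ and $\norm{\xb}=1$, apply the sign hypothesis termwise to get monotonicity of $\f_{\bf b}(V^n(\xb))$, and conclude by boundedness. Your explicit mention of the uniform bound $|\f_{\bf b}|\leq\norm{\bf b}_\infty$ and of both sign cases only makes explicit what the paper leaves implicit.
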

\begin{proof} It is easy to see that the functional
$\varphi_{\bf b}$ is well-defined on $S$ (even on $\ell^1$).
One can check
\begin{eqnarray}\label{starchik}
\f_{{\bf b}}(V(\xb))& =& \sum\limits_{k=1}^{\infty}b_{k}x_{k} \left( 1+\sum\limits_{i=1}^{\infty}a_{ki}x_{i} \right)\nonumber\\[2mm]
&=&\sum\limits_{k=1}^{\infty}\left( b_{k}x_{k} + b_{k}x_{k}\sum\limits_{i=1}^{\infty}a_{ki}x_{i} \right)
\end{eqnarray}
Due to
	\begin{eqnarray}
		\left\vert \sum\limits_{k=1}^{\infty}  b_{k}x_{k} \sum\limits_{i=1}^{\infty} a_{ki}x_{i} \right\vert &\leq&
		\left\vert  b_{1}x_{1}\sum\limits_{i=1}^{\infty}a_{1i}x_{i} \right\vert +
\left\vert b_{2}x_{2}\sum\limits_{i=1}^{\infty}a_{2i}x_{i} \right\vert + \cdots \nonumber \\
		&\leq& |b_{1}|x_{1}\sum\limits_{i=1}^{\infty}|a_{1i}|x_{i} + |b_{2}|x_{2}\sum\limits_{i=1}^{\infty}|a_{2i}|x_{i} + \cdots \nonumber \\
		&\leq& |b_{1}|x_{1} \sum\limits_{i=1}^{\infty}x_{i} + |b_{2}|x_{2} \sum\limits_{i=1}^{\infty}x_{i} + \cdots \nonumber \\
		&\leq& \sum\limits_{k=1}^{\infty} |b_{k}|x_{k} \nonumber \\
		&\leq& \norm{\bf b}_\infty \sum\limits_{k=1}^{\infty} x_{k} \nonumber \\
		& = & \norm{\bf b}_\infty \nonumber,
	\end{eqnarray}
we infer that the series $ \sum\limits_{k=1}^{\infty}  b_{k}x_{k} \sum\limits_{i=1}^{\infty} a_{ki}x_{i} $ converges.
	Therefore, from \eqref{starchik} we obtain
	\begin{eqnarray}
		\f_{{\bf b}}(V(\xb)) & =& \sum\limits_{k=1}^{\infty} b_{k}x_{k} +
\sum\limits_{k=1}^{\infty} b_{k}x_{k}\sum\limits_{i=1}^{\infty}a_{ki}x_{i}  \nonumber \\
		&=& \f_{{\bf b}}(\xb) + \sum\limits_{k=1}^{\infty} b_{k}x_{k}\sum\limits_{i=1}^{\infty}a_{ki}x_{i}  \nonumber
	\end{eqnarray}
The assumption of Theorem yields that $ x_{k}\sum\limits_{i=1}^{\infty}b_{k}a_{ki}x_{i} \leq 0 $ for every $ k\in \bn $.
	Hence,
$$
\f_{{\bf b}}(V(\xb)) \leq \f_{{\bf b}}(\xb)
$$
which implies that $ \lim\limits_{n \rightarrow \infty}\f_{{\bf b}}(V^{n}(\xb))  $ converges.
	This completes the proof.
\end{proof}
We notice that the set of Volterra q.s.o. that satisfies the condition of Theorem \ref{thm_Ly_da<0} is non-empty. Consider the following example
\begin{exm}
	Let us choose the following skew-symmetric matrix:
	 \[
	 \left[
	 \begin{array}{cccccccccccc} %{llllllllllll}
	 0 & a_{12} & a_{13} & \cdots & a_{1m-1} & -1 & a_{1m+1} & \cdots & a_{1n-1} & 1 & a_{1n+1} & \cdots \\
	 a_{2,1} & 0 & a_{23} & \cdots & a_{2m-1} & -1 & a_{2m+1} & \cdots & a_{2n-1} & 1 & a_{2n+1} & \cdots \\
	 \vdots& \vdots&\vdots & \vdots& \vdots& \vdots& \vdots& \vdots& \vdots& \vdots& \vdots& \vdots \\
	 a_{m-11} & a_{m-12} & a_{m-13} &  \dots & 0 & -1 & a_{m-1m+1} & \dots & a_{m-1n-1} & 1 & a_{m-1n+1} \\
	 1& 1& 1 & \cdots& 1& 0& 1& \cdots& 1& 1& 1& \cdots \\
	 a_{m+11} & a_{m+12} & a_{m+13} &  \dots & a_{m+1m-1} & -1 & 0 & \dots & a_{m+1n-1} & 1 & a_{m+1n+1} \\
	 \vdots& \vdots&\vdots & \vdots& \vdots& \vdots& \vdots& \vdots& \vdots& \vdots& \vdots& \vdots \\
	 a_{n-11} & a_{n-12} & a_{n-13} &  \dots & a_{n-1m-1} & -1 & a_{n-1m+1} & \dots & 0 & 1 & a_{n-1n+2} \\
	 -1& -1 & -1 & \cdots& -1& -1& -1& \cdots& -1& 0& -1& \cdots \\
	 a_{n+11} & a_{n+12} & a_{n+13} &  \dots & a_{n+1m-1} & -1 & a_{n+1m+1} & \dots & a_{n+1n-1} & 1 & 0 \\
	 \vdots& \vdots&\vdots & \vdots& \vdots& \vdots& \vdots& \vdots& \vdots& \vdots& \vdots& \vdots \\
	 \end{array}
	 \right]
	 \]
	 Choose $ {\bf b} = (0,\dots,0,-b_{m},0,\dots,0,b_{n},0,\dots) $
such that $ b_{m},b_{n} > 0 $. One can see that $ b_{k}a_{ki} \leq 0 $  for any pair $(k,i)\in\bn^2$.
\end{exm}

Denote
$$
\begin{array}{ll}
{\bf b}_{\uparrow} = (b_{1},\dots,b_{n},\dots),\ \ \
\mbox{such that } b_{1} \leq \cdots \leq b_{n} \leq \cdots\\
{\bf b}_{\downarrow} = (b_{1},\dots,b_{n},\dots),\ \ \
\mbox{such that } b_{1} \geq \cdots \geq b_{n} \geq \cdots
\end{array}
$$

\begin{thm}\label{thm_Ly_D_up}
Let $V\in\mathcal V^+$ and ${\bf b}_\uparrow\in\ell^\infty$.
Then a functional $ \f_{{\bf b}_\uparrow}$ given by \eqref{uffff} on $S$
is a Lyapunov function for $V$.
\end{thm}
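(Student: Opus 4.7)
The plan is to mimic the computation in the proof of Theorem \ref{thm_Ly_da<0} and then, using the skew-symmetry of $(a_{ki})$, symmetrize the resulting double series so that the hypotheses $V\in\mathcal V^+$ and $\mathbf{b}=\mathbf{b}_\uparrow$ can be combined. From the earlier argument we already have
$$
\varphi_{\mathbf{b}_\uparrow}(V(\xb))=\varphi_{\mathbf{b}_\uparrow}(\xb)+\sum_{k=1}^\infty\sum_{i=1}^\infty b_{k}a_{ki}x_kx_i,
$$
and the same estimate showed that the double series converges absolutely (bounded in modulus by $\|\mathbf{b}\|_\infty$), so Fubini lets us reorder and split sums freely.

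Next I would rewrite the double sum in symmetric form. Splitting over $k<i$ and $k>i$, using $a_{kk}=0$ and $a_{ki}=-a_{ik}$, and then relabeling the $k>i$ sum (swap the names of $k$ and $i$) gives
$$
\sum_{k=1}^\infty\sum_{i=1}^\infty b_{k}a_{ki}x_kx_i=\sum_{k<i}(b_k-b_i)a_{ki}x_kx_i.
$$
Now the hypothesis $\mathbf{b}=\mathbf{b}_\uparrow$ gives $b_k-b_i\le 0$ for $k<i$, while $V\in\mathcal V^+$ gives $a_{ki}\ge 0$ for $k<i$. Since the $x_k$ are non-negative, every term in the symmetrized sum is $\le 0$, hence
$$
\varphi_{\mathbf{b}_\uparrow}(V(\xb))\le \varphi_{\mathbf{b}_\uparrow}(\xb),\qquad \xb\in S.
$$

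Applying this inequality iteratively, the sequence $\{\varphi_{\mathbf{b}_\uparrow}(V^n(\xb))\}_{n\ge 0}$ is monotone non-increasing. It is also bounded: for any $\yb\in S$ one has $|\varphi_{\mathbf{b}_\uparrow}(\yb)|\le\|\mathbf{b}\|_\infty\sum_k y_k=\|\mathbf{b}\|_\infty$. Therefore the limit $\lim_{n\to\infty}\varphi_{\mathbf{b}_\uparrow}(V^n(\xb))$ exists for every $\xb\in S$, which is exactly the statement that $\varphi_{\mathbf{b}_\uparrow}$ is a Lyapunov function for $V$. Pointwise (hence $\ell^1$) continuity of $\varphi_{\mathbf{b}_\uparrow}$ was checked in Lemma \ref{lem5} (any bounded sequence defines an $\ell^1$-continuous linear functional), so all conditions of the definition are met.

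The only subtle step is the symmetrization identity; the rest is routine. The manipulation is justified because the earlier absolute bound $\sum_{k,i}|b_k a_{ki}x_kx_i|\le \|\mathbf{b}\|_\infty$ legitimizes Fubini and the rearrangement. Once that is in place the conclusion is immediate from monotone convergence of a bounded real sequence.
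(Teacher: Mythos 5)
Your argument is correct and coincides with the paper's own proof: both start from the identity established in Theorem \ref{thm_Ly_da<0}, symmetrize the absolutely convergent double sum via skew-symmetry to obtain $\sum_{k<i}a_{ki}(b_k-b_i)x_kx_i\leq 0$, and conclude from monotonicity and boundedness of $\{\varphi_{{\bf b}_\uparrow}(V^n(\xb))\}$. The only slip is invoking Lemma \ref{lem5} for continuity---that lemma concerns pointwise continuity and requires ${\bf b}\in c_0$, which a bounded increasing sequence typically fails---but the definition of Lyapunov function only demands $\ell^1$-continuity, which, as your parenthetical remark correctly notes, holds trivially for any ${\bf b}\in\ell^\infty$.
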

\begin{proof} Let $\frak{f}(V)=(a_{ki})$, then $(a_{ki})\in\mathbb{A}^+$, i.e. $ a_{ki} \geq 0 $ for any $ i >k $. From the proof of Theorem \ref{thm_Ly_da<0}, one finds
\begin{eqnarray}
\f_{{\bf b}_\uparrow}(V(\xb))
= \f_{{\bf b}_\uparrow}(\xb) + \sum\limits_{k=1}^{\infty} b_{k}x_{k}\sum\limits_{i=1}^{\infty}a_{ki}x_{i}  \nonumber.
\end{eqnarray}
Now	taking into account that the series
$\sum\limits_{k=1}^{\infty} b_{k}x_{k}\sum\limits_{i=1}^{\infty}a_{ki}x_{i}$
is absolutely converged
together with $ a_{ki} = - a_{ik} $ and $ b_{k} \leq b_{i} $, whenever $ i > k $, we find
\begin{eqnarray}
\sum\limits_{k=1}^{\infty} b_{k}x_{k}\sum\limits_{i=1}^{\infty}a_{ki}x_{i}
&=& \sum\limits_{k=1}^{\infty}\sum\limits_{i=k+1}^{\infty} x_{k}x_{i} \left( b_{k}a_{ki} + b_{i}a_{ik} \right) \nonumber \\
&=& \sum\limits_{k=1}^{\infty}\sum\limits_{i=k+1}^{\infty} x_{k}x_{i}a_{ki} (b_{k} - b_{i})\nonumber \\[2mm]
&\leq& 0 \nonumber
\end{eqnarray}
Hence,
$$
\f_{{\bf b}_{\uparrow}}(V(\xb)) \leq \f_{{\bf b}_{\uparrow}}(\xb),
$$
which yields that $\lim\limits_{n \rightarrow \infty}\f_{{\bf b}_{\uparrow}}(V^{n}(\xb))$ exists.
This completes the proof.
\end{proof}

Using the same argument, we can prove

\begin{cor}\label{cor_a_ik_-} Let $V\in\mathcal V^-$ and ${\bf b}_\downarrow\in\ell^\infty$.
Then the functional $ \f_{{\bf b}_\downarrow}$ given by \eqref{uffff} on $S$
is a Lyapunov function for $ V $.
\end{cor}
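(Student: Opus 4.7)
The plan is to mirror the argument of Theorem \ref{thm_Ly_D_up} with signs reversed, relying on the fact that for $V\in\mathcal{V}^-$ the matrix entries $a_{ki}$ are non-positive for $k<i$, while for ${\bf b}_\downarrow$ the differences $b_k-b_i$ are non-negative for $k<i$.

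First, since $\norm{{\bf b}_\downarrow}_\infty<\infty$ and $\xb\in S$, the computation carried out at the start of the proof of Theorem \ref{thm_Ly_da<0} applies verbatim to yield
$$
\f_{{\bf b}_\downarrow}(V(\xb)) \;=\; \f_{{\bf b}_\downarrow}(\xb) + \sum_{k=1}^{\infty} b_{k}x_{k}\sum_{i=1}^{\infty}a_{ki}x_{i},
$$
where the double series converges absolutely (bounded by $\norm{{\bf b}_\downarrow}_\infty$). Absolute convergence lets me regroup the summation by unordered pairs $\{k,i\}$ with $k<i$, and the skew-symmetry $a_{ki}=-a_{ik}$ collapses each pair into
$$
\sum_{k=1}^{\infty}\sum_{i=k+1}^{\infty} x_k x_i \, a_{ki}(b_k - b_i).
$$

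Next, I would invoke the two hypotheses: $V\in\mathcal{V}^-$ gives $a_{ki}\leq 0$ whenever $k<i$, and ${\bf b}_\downarrow$ gives $b_k-b_i\geq 0$ whenever $k<i$, so the product $a_{ki}(b_k-b_i)$ is non-positive. Since $x_k,x_i\geq 0$, the entire double sum is $\leq 0$, so $\f_{{\bf b}_\downarrow}(V(\xb))\leq \f_{{\bf b}_\downarrow}(\xb)$. Iterating, the sequence $\{\f_{{\bf b}_\downarrow}(V^n(\xb))\}_{n\geq 0}$ is monotonically non-increasing.

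Finally, to conclude existence of the limit I need a lower bound: for any $\yb\in S$ one has $\f_{{\bf b}_\downarrow}(\yb)=\sum_k b_k y_k\geq (\inf_k b_k)\sum_k y_k = \inf_k b_k > -\infty$, since ${\bf b}_\downarrow\in\ell^\infty$ is bounded below. Monotone non-increasing plus bounded below gives convergence of $\lim_{n\to\infty}\f_{{\bf b}_\downarrow}(V^n(\xb))$, which is exactly the Lyapunov property. There is no real obstacle here; the only point deserving mild care is justifying the absolute-convergence based rearrangement of the double sum, but this was already secured in the estimate of Theorem \ref{thm_Ly_da<0} and transfers without change.
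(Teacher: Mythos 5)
Your proposal is correct and follows essentially the same route as the paper, which proves the corollary by repeating the argument of Theorem \ref{thm_Ly_D_up}: the identity from Theorem \ref{thm_Ly_da<0}, the pairing $\sum_{k}\sum_{i>k}x_kx_ia_{ki}(b_k-b_i)$ via skew-symmetry, and the sign flip ($a_{ki}\le 0$, $b_k-b_i\ge 0$ for $k<i$) giving monotonicity, with boundedness of $\varphi_{{\bf b}_\downarrow}$ on $S$ ensuring convergence. Your explicit lower-bound remark is a harmless elaboration of what the paper leaves implicit.
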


To investigate $\omega_V^{(w)}(\xb_0)$ of Volterra q.s.o. usual Lyapunov functions
may not be applicable. Therefore, we want to introduce quasi Lyapunov function
which is pointwise continuous rather than $\ell^1$-norm continuity.

A pointwise continuous function $\varphi: {\bf B}^+_1\to\mathbb R$ is called a
{\it quasi Lyapunov function} for q.s.o. $V$ if the limit $ \lim\limits_{n
	\rightarrow \infty } \varphi(V^{n}(\xb))$ exists for any initial
point $ \xb\in S$.

Next Theorem describes all linear quasi Lyapunov functions for Volterra operators taken from the classes $\mathcal V^+$, $\mathcal V^-$.

\begin{thm}\label{thm_qLf} Let $V\in\mathcal V^+\cup\mathcal V^-$ and ${\bf b}_\downarrow\in c_0$.
Then a linear functional
$\varphi_{{\bf b}_\downarrow}$ given by \eqref{uffff} on ${\bf B}_1^+$
is a quasi Lyapunov function.
\end{thm}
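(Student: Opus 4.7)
The plan is to verify the two requirements in the definition of a quasi Lyapunov function separately, namely pointwise continuity on $ {\bf B}_1^+ $ and existence of the limit $ \lim_{n\to\infty}\varphi_{{\bf b}_\downarrow}(V^{n}(\xb))$ for every $\xb\in S$. The first is immediate from Lemma~\ref{lem5}, since ${\bf b}_\downarrow\in c_0$ forces $\varphi_{{\bf b}_\downarrow}$ to be pointwise continuous on the positive unit ball. So the real work is the monotone-convergence argument.

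First I would record the elementary structural observation that a decreasing sequence converging to $0$ must be non-negative term by term: hence ${\bf b}_\downarrow\in c_0$ automatically satisfies $b_1\ge b_2\ge\cdots\ge 0$. In particular $\|{\bf b}_\downarrow\|_\infty=b_1$, and $b_k-b_i\ge 0$ whenever $k<i$.

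Next I would copy the computation from the proof of Theorem~\ref{thm_Ly_D_up}. Exactly as there, the estimate $|b_k|\le \|{\bf b}_\downarrow\|_\infty$ together with $|a_{ki}|\le 1$ shows that the double series $\sum_k b_k x_k\sum_i a_{ki} x_i$ is absolutely convergent on $S$, so using skew-symmetry $a_{ki}=-a_{ik}$ we may rearrange and obtain
\begin{equation*}
\varphi_{{\bf b}_\downarrow}(V(\xb))-\varphi_{{\bf b}_\downarrow}(\xb)
=\sum_{k=1}^{\infty}\sum_{i=k+1}^{\infty} x_k x_i\, a_{ki}\,(b_k-b_i).
\end{equation*}
If $V\in\mathcal{V}^+$ then $a_{ki}\ge 0$ for $k<i$, and combined with $b_k-b_i\ge 0$ this gives $\varphi_{{\bf b}_\downarrow}(V(\xb))\ge\varphi_{{\bf b}_\downarrow}(\xb)$; the sequence $\{\varphi_{{\bf b}_\downarrow}(V^n(\xb))\}$ is therefore non-decreasing, and it is bounded above by $\|{\bf b}_\downarrow\|_\infty\cdot\|V^n(\xb)\|=b_1$, so the limit exists. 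If $V\in\mathcal{V}^-$ then $a_{ki}\le 0$ for $k<i$, so the same identity yields a non-increasing sequence, bounded below by $0$ because all $b_k$ and all coordinates of $V^n(\xb)$ are non-negative; again the limit exists.

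I do not expect any serious obstacle: the core estimate is exactly the one already used in Theorem~\ref{thm_Ly_da<0} and Theorem~\ref{thm_Ly_D_up}, and the only additional ingredient is Lemma~\ref{lem5} to upgrade $\ell^1$-continuity to pointwise continuity. The one point to be a little careful about is confirming that ${\bf b}_\downarrow\in c_0$ really does force non-negativity (so that the Lyapunov-type monotonicity has a one-sided bound on the correct side for the $\mathcal{V}^-$ case), but this is a one-line check. Everything else is a direct assembly of results already established in the paper.
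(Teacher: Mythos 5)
Your proposal is correct and follows essentially the same route as the paper's own proof: Lemma~\ref{lem5} for pointwise continuity, the observation that a decreasing null sequence is non-negative, the skew-symmetric rearrangement giving $\varphi_{{\bf b}_\downarrow}(V(\xb))-\varphi_{{\bf b}_\downarrow}(\xb)=\sum_{k}\sum_{i>k}a_{ki}(b_k-b_i)x_kx_i$, and monotonicity plus the two-sided bound $0\leq\varphi_{{\bf b}_\downarrow}\leq b_1$ to conclude convergence. No gaps; the extra care you take in re-justifying absolute convergence of the double series is harmless and matches what the paper borrows from Theorem~\ref{thm_Ly_da<0}.
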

\begin{proof}
From Lemma \ref{lem5} we immediately conclude that $\varphi_{{\bf b}_\downarrow}$ is pointwise continuous on
${\bf B}^+_1$. One can check that $b_k\geq0$ for any $k\in\bn$, since $\{b_n\}$ is decreasing sequence with
$b_n\to0$.

Let $\frak{f}(V)=(a_{ki})$. Then we have
$$
\varphi_{{\bf b}_\downarrow}(V(\xb))-\varphi_{{\bf b}_\downarrow}(\xb)=
\sum_{k=1}^\infty\sum_{i=k+1}^\infty a_{ki}(b_k-b_i)x_kx_i.
$$
Since the sequence $\{b_n\}$ is decreasing, then one gets
$$
\varphi_{{\bf b}_\downarrow}(V(\xb))-\varphi_{{\bf b}_\downarrow}(\xb)\left\{
\begin{array}{ll}
\geq0, & \mbox{if } V\in\mathcal V^+,\\
\leq0, & \mbox{if } V\in\mathcal V^-.
\end{array}
\right.
$$
This yields that for any $\xb\in S$ the sequence $\{\varphi_{{\bf b}_\downarrow}(V^n(\xb))\}$ is increasing if
$V\in\mathcal V^+$, and it is decreasing if $V\in\mathcal V^-$.

On the other hand, due to $0\leq b_n\leq b_1$ for any $n\geq1$,
one has $0\leq\varphi_{{\bf b}_\downarrow}(\yb)\leq b_1$
for every $\yb\in{\bf B}_1^+$. Consequently,
$$
0\leq\varphi_{{\bf b}_\downarrow}(V^n(\xb))\leq b_1,\ \ \ \forall n\in\mathbb N,\ \forall\xb\in S.
$$
So, we conclude that if $V\in\mathcal V^+\cup\mathcal V^-$ then there exists the limit
$\lim_{n\to\infty}\varphi_{{\bf b}_\downarrow}(V^n(\xb))$ for any
$\xb\in S$.
This implies the required assertion.
\end{proof}
\begin{rem}
From the last Theorem, we conclude that if ${\bf b}\notin c_0$,
then $\varphi_{\bf b}$ is not quasi Lyapunov function,
but it is Lyapunov function.
\end{rem}

\section{Omega Limiting Sets}

In this section, we are interested to study the limiting sets $\omega_V(\xb)$, $\omega_V^{(w)}(\xb)$
and their relation for Volterra operators from the class $\mathcal V^+\cup\mathcal V^-$.
To do so we are going to employ constructed (quasi) Lyapunov functions.
\begin{prop}\label{prop_w=w}
Let $V\in\mathcal V^+\cup\mathcal V^-$ and $\xb_0\in S$.Then the following
statements hold:
\begin{enumerate}
\item[$(i)$] if $V\in\mathcal V^+$ then $\w_V^{(w)}(\xb_0)\subset S$;\\
\item[$(ii)$] if $V\in\mathcal V^-$ then $\w_V^{(w)}(\xb_0)\subset S_r$ for some $r\leq1$.
\end{enumerate}
\end{prop}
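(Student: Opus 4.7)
The plan is to exploit the linear quasi Lyapunov functions from Theorem~\ref{thm_qLf} associated with the truncated weights $\mathbf{b}^{(m)} := (\underbrace{1,\dots,1}_{m},0,0,\dots)$. Each $\mathbf{b}^{(m)}$ belongs to $c_0$ and is non-increasing, so ${\bf b}^{(m)} = {\bf b}_\downarrow$, and $\varphi_{\mathbf{b}^{(m)}}(\xb)=\sum_{k=1}^{m}x_k$ is pointwise continuous on ${\bf B}_1^+$ (by Lemma~\ref{lem5}). By Theorem~\ref{thm_qLf}, for any $V\in\mathcal V^+\cup\mathcal V^-$ and every $\xb_0\in S$ the limit
$$L_m:=\lim_{n\to\infty}\varphi_{\mathbf{b}^{(m)}}(V^{n}(\xb_0))=\lim_{n\to\infty}\sum_{k=1}^m(V^n(\xb_0))_k$$
exists for each $m\geq 1$.

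Next, I would pass this information to an arbitrary weak limit point. By Proposition~\ref{Far}, any $\xb^{*}\in\omega_{V}^{(w)}(\xb_0)$ lies in ${\bf B}_1^+$. Picking a subsequence with $V^{n_k}(\xb_0)\stackrel{\mathrm{p.w.}}{\longrightarrow}\xb^{*}$, the pointwise continuity of $\varphi_{\mathbf{b}^{(m)}}$ on ${\bf B}_1^+$ gives $\sum_{k=1}^{m}x^{*}_k=L_m$. Letting $m\to\infty$ I obtain
$$\|\xb^{*}\|=\sum_{k=1}^{\infty}x^{*}_k=\lim_{m\to\infty}L_m=:r,$$
and crucially this value $r$ depends only on $\xb_0$ (not on the choice of weak limit point or of subsequence). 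Hence all elements of $\omega_V^{(w)}(\xb_0)$ have the same $\ell^1$-norm.

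To finish, I would identify $r$ in each case. For $V\in\mathcal V^+$, the proof of Theorem~\ref{thm_qLf} shows $\{\varphi_{\mathbf{b}^{(m)}}(V^n(\xb_0))\}_n$ is non-decreasing, so $L_m\geq \sum_{k=1}^{m}(\xb_0)_k$; letting $m\to\infty$ yields $r\geq\|\xb_0\|=1$, while $\xb^{*}\in {\bf B}_1^+$ forces $r\leq 1$. Thus $r=1$, i.e.\ $\omega_V^{(w)}(\xb_0)\subset S$. For $V\in\mathcal V^-$, the only bound I get is $L_m\leq 1$ for every $m$ (indeed the sequence in $n$ is non-increasing, bounded above by the initial value which is $\leq 1$), so $r\leq 1$ and $\omega_V^{(w)}(\xb_0)\subset S_r$ with some $r\in[0,1]$.

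The routine verifications (membership of $\mathbf{b}^{(m)}$ in $c_0$, monotonicity along the orbit, interchange of limits that is actually just a supremum of a non-decreasing sequence) are immediate; the only subtle point is to make the pointwise continuity of the truncations carry the invariant $L_m$ uniformly across all weak subsequential limits, which is precisely what gives a \emph{single} $r$ independent of $\xb^{*}$. I expect this uniformity step to be the main obstacle to state cleanly, but it follows straight from Lemma~\ref{lem5} applied to each $\mathbf{b}^{(m)}$.
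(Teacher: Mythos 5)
Your proof is correct and follows essentially the same route as the paper: both rest on the quasi Lyapunov functionals of Theorem~\ref{thm_qLf} built from truncated non-increasing $c_0$ weights, together with their pointwise continuity (Lemma~\ref{lem5}), to force all weak limit points of the orbit to share one common $\ell^1$-norm, which in the $\mathcal V^+$ case equals $1$ by monotonicity along the trajectory. The only differences are cosmetic: the paper uses weights with geometric tails $2^{-k}$ and argues by contradiction separately in the two cases, whereas you use zero tails and identify the common norm directly as $r=\lim_{m}L_m$.
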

\begin{proof} If $\xb_0\in Fix(V)$ then the statements are obvious. Now, let us assume
$\xb_0\in S\setminus Fix(V)$. According to Proposition \ref{Far}
we infer that $\w_{V}^{(w)}(\xb_0)\neq\emptyset$ for any q.s.o. $V$. Moreover,
$\w_{V}^{(w)}(\xb_0)\subset{\bf B}_1^+$.\\
$(i)$ Let $V\in\mathcal V^+$, and pick a point
$\ab\in\w_{V}^{(w)}(\xb_0)$. We want to prove that
$\norm{\ab}=1$.

Assume that $\norm{\ab}<1$.
Due to $\norm{\xb_0}=1$, for a positive number
$\varepsilon=\frac{1-\norm{\ab}}{2}$, there exists an
integer $m\geq1$ such that
$$
\sum_{k=1}^{m}x_k^0>1-\varepsilon.
$$
For a given $m$ let us define a sequence ${\bf b}^{[m]}_{\downarrow}=(\tilde{b}_1,\tilde{b}_2,\dots)$
as follows
\begin{equation}\label{c_k}
\tilde{b}_k=\left\{
\begin{array}{ll}
1, & k\leq m;\\
\frac{1}{2^k}, & k>m.
\end{array}
\right.
\end{equation}
It is clear that ${\bf b}^{[m]}_{\downarrow}\in c_0$ and
Theorem \ref{thm_qLf} implies that $\varphi_{{\bf b}^{[m]}_\downarrow}$ is a quasi
Lyapunov function for $V$. Hence, we have
$$
\varphi_{{\bf b}^{[m]}_\downarrow}(\xb_0)=\sum_{k=1}^{m}x_k^0+\sum_{k=m+1}^\infty\frac{x_k^0}{2^k}\geq\sum_{k=1}^{m}x_k^0>1-\varepsilon
$$
and
$$
\varphi_{{\bf b}^{[m]}_\downarrow}(\ab)=\sum_{k=1}^{m}a_k+\sum_{k=m+1}^\infty\frac{a_k}{2^k}\leq\sum_{k=1}^\infty a_k=\norm{\ab}.
$$
Again from Theorem \ref{thm_qLf} we infer that the sequence $\{\varphi_{{\bf b}^{[m]}_\downarrow}(V^n(\xb_0))\}$
is increasing. Therefore,
\begin{eqnarray*}
\varphi_{{\bf b}^{[m]}_\downarrow}(V^n(\xb_0))-\varphi_{{\bf b}^{[m]}_\downarrow}(\ab)&\geq&\varphi_{{\bf b}^{[m]}_\downarrow}(\xb_0)-
\varphi_{{\bf b}^{[m]}_\downarrow}(\ab)\\
&>&1-\varepsilon-\norm{\ab}\\
&=&\frac{1+\norm{\ab}}{2}>0.
\end{eqnarray*}
This contradicts to the pointwise continuity of $\varphi_{{\bf b}^{[m]}_\downarrow}$ at point $\ab$. So, we conclude
$\norm{\ab}=1$, which yields $\w_V^{(w)}(\xb_0)\subset S$.\\
$(ii)$ Let $V\in\mathcal V^-$. Pick any $\xb,\yb\in\w_V^{(w)}(\xb_0)$. Now we want to show
$\xb=\yb$.

Suppose that $\norm{\xb}>\norm{\yb}$.
For a positive $\varepsilon=\frac{\norm{\xb}-\norm{\yb}}{2}$ there exists an
integer $m\geq1$ such that
$$
\sum_{k=1}^{m}x_k>\norm{\xb}-\varepsilon.
$$
Furthermore, for a given $m\geq1$, let us consider ${\bf b}^{[m]}_\downarrow\in c_0$ given by \eqref{c_k}. Then, Theorem
\ref{thm_qLf} yields that $\varphi_{{\bf b}^{[m]}_\downarrow}$ is a quasi Lyapunov function for $V$, and
there exists $\xi\in[0,1]$ such that
\begin{equation}\label{contrad}
\varphi_{{\bf b}^{[m]}_\downarrow}(V^n(\xb_0))\to\xi,\ \ \ \mbox{as } n\to\infty
\end{equation}

On the other hand, we have
\begin{eqnarray*}
\varphi_{{\bf b}^{[m]}_\downarrow}(\xb)&=&\sum_{k=1}^mx_k+\sum_{k=m+1}^\infty\frac{x_k}{2^k}\\
&\geq&\sum_{k=1}^mx_k>\norm{\xb}-\varepsilon\\
&=&\frac{\norm{\xb}+\norm{\yb}}{2}>\norm{\yb}\\
&\geq&\sum_{k=1}^my_k+\sum_{k=m+1}^\infty\frac{y_k}{2^k}\\
&=&\varphi_{{\bf b}^{[m]}_\downarrow}(\yb).
\end{eqnarray*}

Thus, we have shown that $\varphi_{{\bf b}^{[m]}_\downarrow}(\xb)>\varphi_{{\bf b}^{[m]}_\downarrow}(\yb)$
if $\norm{\xb}>\norm{\yb}$.
This contradicts to \eqref{contrad}, so
$\norm{\xb}=\norm{\yb}$. Hence, $\w_V^{(w)}(\xb_0)\subset S_r$ for some $r\geq0$. Finally,
by Proposition \ref{Far} we conclude that $r\leq1$.
\end{proof}

\begin{thm}\label{omega=omega}
Let $V\in\mathcal V^+\cup\mathcal V^-$ and $\xb_0\in S$. Then the following statements hold:
\begin{enumerate}
\item[$(i)$] if $V\in\mathcal V^+$ then $\w_V(\xb_0)=\w_V^{(w)}(\xb_0)$;
\item[$(ii)$] if $V\in\mathcal V^-$, then $\w_V(\xb_0)=\w_V^{(w)}(\xb_0)$ iff $\w_V(\xb_0)\neq\emptyset$.
\end{enumerate}
\end{thm}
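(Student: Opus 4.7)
The plan is to exploit the obvious inclusion $\w_V(\xb_0)\subset\w_V^{(w)}(\xb_0)$ (because $\ell^1$-convergence implies pointwise convergence) and then upgrade pointwise limits to norm limits via Lemma \ref{lem4}. The key observation is that the trajectory $\{V^n(\xb_0)\}$ lives entirely in $S=S_1$, so Lemma \ref{lem4} applies provided we know that the pointwise limit also has norm one. The role of Proposition \ref{prop_w=w} is precisely to deliver this norm-one conclusion (directly in the $\mathcal V^+$ case, and conditionally in the $\mathcal V^-$ case).

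For part $(i)$, I would pick $\ab\in\w_V^{(w)}(\xb_0)$ and a subsequence $V^{n_k}(\xb_0)\stackrel{\mathrm{p.w.}}{\longrightarrow}\ab$. Proposition \ref{5vo5} gives $V^{n_k}(\xb_0)\in S$ for every $k$, so the whole sequence lies on $S_1$. Proposition \ref{prop_w=w}$(i)$ guarantees $\ab\in S$, i.e.\ $\norm{\ab}=1$. Hence the equivalence in Lemma \ref{lem4} applies and $V^{n_k}(\xb_0)\stackrel{\norm{\cdot}}{\longrightarrow}\ab$, so $\ab\in\w_V(\xb_0)$. Combined with the trivial inclusion, this yields $\w_V(\xb_0)=\w_V^{(w)}(\xb_0)$.

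For part $(ii)$, the forward implication ($\Rightarrow$) is immediate: Proposition \ref{Far} together with sequential compactness of ${\bf B}_1^+$ (applied to the trajectory $\{V^n(\xb_0)\}\subset{\bf B}_1^+$) ensures $\w_V^{(w)}(\xb_0)\neq\emptyset$, so the hypothesised equality forces $\w_V(\xb_0)\neq\emptyset$. For ($\Leftarrow$), assume $\w_V(\xb_0)\neq\emptyset$ and pick $\ab\in\w_V(\xb_0)$. By Lemma \ref{lem1} we have $\norm{\ab}=1$. Since every norm limit is a pointwise limit, $\ab\in\w_V^{(w)}(\xb_0)$. Proposition \ref{prop_w=w}$(ii)$ says $\w_V^{(w)}(\xb_0)\subset S_r$ for some $r\leq 1$, and evaluating this at $\ab$ forces $r=1$. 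Therefore $\w_V^{(w)}(\xb_0)\subset S_1=S$, and the same Lemma \ref{lem4} upgrade argument as in part $(i)$ gives $\w_V^{(w)}(\xb_0)\subset\w_V(\xb_0)$, completing the equality.

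I do not expect a genuine obstacle here: the argument is essentially a bookkeeping exercise that packages Proposition \ref{prop_w=w} and Lemma \ref{lem4} together. The only place that requires a tiny amount of care is the $\mathcal V^-$ case, where without the assumption $\w_V(\xb_0)\neq\emptyset$ the radius $r$ in Proposition \ref{prop_w=w}$(ii)$ could strictly drop below $1$ (as illustrated by Example \ref{asexample}, where pointwise limits land at ${\bf 0}\in S_0$); it is precisely this possibility that makes the equivalence in $(ii)$ conditional, and it is also what prevents us from deducing the equality unconditionally.
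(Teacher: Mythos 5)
Your argument is correct and follows essentially the same route as the paper: the trivial inclusion plus Lemma \ref{lem4}, with Proposition \ref{prop_w=w} supplying the norm-one (or common-sphere) information, Lemma \ref{lem1} pinning $\w_V(\xb_0)\subset S$, and Proposition \ref{Far} giving non-emptiness of $\w_V^{(w)}(\xb_0)$ for the forward implication in $(ii)$. No gaps to report.
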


\begin{proof}
$(i)$ Let $V\in\mathcal V^+$. Then according to Proposition
\ref{prop_w=w} we have $\w_V^{(w)}(\xb_0)\subset S$. By Lemma \ref{lem4} one gets $\w_V(\xb_0)=\w_V^{(w)}(\xb_0)$.

$(ii)$ Let $V\in\mathcal V^-$. First we assume that
$\w_V(\xb_0)=\w_V^{(w)}(\xb_0)$. Then Proposition \ref{Far} yields
$\w_V^{(w)}(\xb_0)\neq\emptyset$ which means $\w_V(\xb_0)\neq\emptyset$.

Now suppose that $\w_V(\xb_0)\neq\emptyset$. Due to Lemma \ref{lem1} we have
$\w_V(\xb_0)\subset S$. Therefore, Lemma \ref{lem4} $(i)$ yields
$\w_V(\xb_0)\subset\omega^{(w)}_V(\xb_0)$, so, $\omega^{(w)}_V(\xb_0)\cap S\neq\emptyset$.
Finally, by Proposition \ref{prop_w=w}
we obtain $\w_V^{(w)}(\xb_0)\subset S$. Hence, Lemma \ref{lem4} $(ii)$
implies $\w_V^{(w)}(\xb_0)\subset\w_V(\xb_0)$, which means $\w_V(\xb_0)=\w_V^{(w)}(\xb_0)$.
This completes the proof.
\end{proof}

Now it would be interesting to know the cardinality of
$\omega_V^{(w)}(\xb_0)$. Next result clarifies this question.
\begin{prop}\label{thm_main}
Let $V\in\mathcal V^+\cup\mathcal V^-$. Then $\abs{\w^{(w)}_V(\xb_0)}=1$
 for any $\xb_0\in S$.
\end{prop}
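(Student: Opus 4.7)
The plan is to show that any two points in $\omega_V^{(w)}(\xb_0)$ must coincide coordinate-by-coordinate, using appropriately chosen linear quasi Lyapunov functions from Theorem~\ref{thm_qLf}.

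First, observe that $\omega_V^{(w)}(\xb_0)$ is nonempty: by Proposition~\ref{Far}, the set ${\bf B}_1^+$ is sequentially compact in the pointwise topology, so the trajectory $\{V^n(\xb_0)\}\subset S\subset {\bf B}_1^+$ admits a pointwise convergent subsequence. Thus it suffices to show that if $\xb,\yb\in\omega_V^{(w)}(\xb_0)$, then $\xb=\yb$.

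The key idea is to use the truncated indicator sequences. For each fixed $m\geq 1$, define
$$
\mathbf{b}^{(m)}=(\underbrace{1,1,\dots,1}_{m},0,0,\dots).
$$
This sequence is non-increasing (hence of type $\mathbf{b}_{\downarrow}$) and eventually zero, so $\mathbf{b}^{(m)}\in c_0$. By Theorem~\ref{thm_qLf}, the functional $\varphi_{\mathbf{b}^{(m)}}$ is a quasi Lyapunov function for $V$, meaning the limit
$$
L_m:=\lim_{n\to\infty}\varphi_{\mathbf{b}^{(m)}}(V^{n}(\xb_0))
$$
exists. Since $\mathbf{b}^{(m)}\in c_0$, Lemma~\ref{lem5} guarantees that $\varphi_{\mathbf{b}^{(m)}}$ is pointwise continuous on ${\bf B}_1^+$. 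Therefore, choosing subsequences $\{n_k\},\{m_k\}$ with $V^{n_k}(\xb_0)\stackrel{\mathrm{p.w.}}{\longrightarrow}\xb$ and $V^{m_k}(\xb_0)\stackrel{\mathrm{p.w.}}{\longrightarrow}\yb$, pointwise continuity combined with the existence of the limit $L_m$ yields
$$
\varphi_{\mathbf{b}^{(m)}}(\xb)=L_m=\varphi_{\mathbf{b}^{(m)}}(\yb),\qquad\text{i.e.,}\qquad \sum_{k=1}^{m}x_k=\sum_{k=1}^{m}y_k.
$$

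Since this holds for every $m\geq 1$, subtracting the identity for $m-1$ from that for $m$ (with the convention that the empty sum is zero) gives $x_m=y_m$ for all $m\geq 1$. Hence $\xb=\yb$, which forces $|\omega_V^{(w)}(\xb_0)|=1$.

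No essential obstacle is anticipated: the proof is a direct exploitation of the quasi Lyapunov machinery already developed in Theorem~\ref{thm_qLf}, together with the observation that finitely-supported non-increasing 0--1 sequences belong to $c_0$, which makes the corresponding coordinate-sum functionals simultaneously monotone along the orbit and pointwise continuous on ${\bf B}_1^+$.
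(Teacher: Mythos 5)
Your proof is correct and follows essentially the same route as the paper: the paper also fixes a family of non-increasing $c_0$ coefficient sequences (there $b^{(n)}_k=1/k$ for $k\le n$, $0$ otherwise, instead of your indicator sequences), invokes Theorem~\ref{thm_qLf} and pointwise continuity to force $\varphi_{\mathbf{b}^{(n)}}(\xb)=\varphi_{\mathbf{b}^{(n)}}(\yb)$ for all $n$, and concludes $\xb=\yb$ by identifying coordinates. The only difference is your explicit differencing step and choice of $0$--$1$ sequences, which is an inessential variation.
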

\begin{proof}
It is clear that $\omega_V(\xb_0)=\{\xb_0\}$ for any $\xb_0\in Fix(V)$. So, we prove the
assumption of Theorem only for $\xb_0\in S\setminus Fix(V)$.

Let $\xb_0\in S\setminus Fix(V)$. Take a sequence
$\{{\bf b}^{(n)}\}_{n\geq1}\subset c_0$ defined by
$$
b_k^{(n)}=\left\{
\begin{array}{ll}
\frac{1}{k}, & k\leq n;\\[2mm]
0, & k>n.
\end{array}\right.,\ \ \ k\in\mathbb N
$$
Then Theorem \ref{thm_qLf} yields that
$\varphi_{{\bf b}^{(n)}}$ (for every $n\geq1$) on ${\bf B}_1^+$
is a quasi Lyapunov function for $V\in\mathcal V^+\cup\mathcal V^-$.

Assume that $\xb,\yb\in\w^{(w)}_V(\xb_0)$. Then the argument of the proof of
Proposition \ref{prop_w=w} implies
$$
\varphi_{{\bf b}^{(n)}}(\xb)=\varphi_{{\bf b}^{(n)}}(\yb)\ \ \ \mbox{for any }n\geq1,
$$
which yields $\xb=\yb$. This completes the proof.
\end{proof}

\section{Dynamics of operators from the class $\mathcal V^+$}

In this section we are going to study dynamics of operators taken
from the class $\mathcal V^+$.

We point out that
if $V=Id$, then all points of $S$ are fixed, hence $\w_{V}(\xb_{0})=\{\xb_{0}\}$ for any $ \xb_0\in S $.
Hence, in what follows we always assume that $V\neq Id$.
\begin{thm}\label{thm_LLF_B}
	Let $V\in\mathcal V^+$. Then
	for any initial point $\xb_{0}\in S\setminus Fix(V)$, we have $\w_{V}(\xb_{0})\subset\partial S$.
\end{thm}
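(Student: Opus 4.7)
The plan is to argue by contradiction: assume there exists $\xb^{*}\in\w_{V}(\xb_{0})\cap riS$ (so $x^{*}_{k}>0$ for every $k\in\bn$), and derive that $V$ must be the identity, contradicting the standing assumption $V\neq Id$. That $\w_{V}(\xb_{0})\subset S$ is already available from Proposition~\ref{prop_w=w}~$(i)$ combined with Theorem~\ref{omega=omega}~$(i)$, so excluding the relative interior $riS$ delivers exactly the desired $\w_{V}(\xb_{0})\subset\partial S$.

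First I would pick a strictly increasing bounded weight, for instance ${\bf b}_{\uparrow}=(-1/k)_{k\geq 1}\in\ell^{\infty}$, and invoke Theorem~\ref{thm_Ly_D_up} to assert that $\{\f_{{\bf b}_{\uparrow}}(V^{n}(\xb_{0}))\}_{n}$ is non-increasing and convergent. The key structural input is the explicit increment formula recorded inside the proof of that theorem,
\begin{equation*}
\f_{{\bf b}_{\uparrow}}(V^{n+1}(\xb_{0}))-\f_{{\bf b}_{\uparrow}}(V^{n}(\xb_{0}))=\sum_{k<i}(V^{n}(\xb_{0}))_{k}(V^{n}(\xb_{0}))_{i}\,a_{ki}(b_{k}-b_{i}),
\end{equation*}
whose summands are all non-positive since $a_{ki}\geq 0$ and $b_{k}-b_{i}<0$. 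Convergence of the Lyapunov sequence sends the right-hand side to $0$, and because the terms share a common sign, each fixed $(k,i)$-summand $T_{ki}^{(n)}$ is actually forced to tend to $0$ as $n\to\infty$.

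Then I would extract a subsequence $\{n_{m}\}$ along which $V^{n_{m}}(\xb_{0})\to\xb^{*}$ in $\ell^{1}$-norm. Coordinate-wise convergence is automatic, so for every fixed pair $k<i$ one has $T_{ki}^{(n_{m})}\to x^{*}_{k}x^{*}_{i}a_{ki}(b_{k}-b_{i})$. Combined with the previous step this yields $x^{*}_{k}x^{*}_{i}a_{ki}(b_{k}-b_{i})=0$ for every pair. The strict positivity of all $x^{*}_{k}$ (from $\xb^{*}\in riS$) together with the strict monotonicity $b_{k}<b_{i}$ forces $a_{ki}=0$ for every $k<i$, whence $\frak{f}(V)=0$ and $V=Id$—contradicting $V\neq Id$.

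The only genuinely delicate step is the implication \emph{``a series of non-positive terms tending to $0$ $\Rightarrow$ each term tends to $0$''}, which I would justify via $|T_{ki}^{(n)}|=-T_{ki}^{(n)}\leq -\sum_{k<i}T_{ki}^{(n)}\to 0$; once this and the coordinate-wise passage to the limit along $\{n_{m}\}$ are in place, everything else is bookkeeping against the increment formula of Theorem~\ref{thm_Ly_D_up}. A conceptually cleaner alternative would be to first establish forward $V$-invariance of $\w_{V}(\xb_{0})$ via an $\ell^{1}$-Lipschitz estimate on $V$ and then read off $\f_{{\bf b}_{\uparrow}}(V(\xb^{*}))=\f_{{\bf b}_{\uparrow}}(\xb^{*})$ directly, but the incremental-sum route bypasses that detour.
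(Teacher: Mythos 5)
Your proof is correct, and it reaches the conclusion by a noticeably more streamlined route than the paper, although both arguments live in the same framework of increasing-weight linear Lyapunov functions from Theorem~\ref{thm_Ly_D_up}. The paper first fixes one pair $(k_{0},i_{0})$ with $a_{k_{0}i_{0}}>0$ (available since $V\neq Id$), builds a weight ${\bf b}_{\uparrow}$ tailored to that pair, and then must split into the cases $\f_{{\bf b}_{\uparrow}}(\xb^{*})=b_{1}$ (handled by a minimization argument on a truncated finite-dimensional set, giving $\xb^{*}=\eb_{1}$) and $\f_{{\bf b}_{\uparrow}}(\xb^{*})>b_{1}$, where a ratio-of-differences limit normalized by $\f_{{\bf b}_{\uparrow}}-b_{1}$ is used to show the increment vanishes along the subsequence, after which only the single $(k_{0},i_{0})$-term is exploited to contradict $\xb^{*}\in riS$. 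You instead take one universal strictly increasing weight, observe that convergence of the monotone Lyapunov sequence forces the full increment to vanish along the whole orbit, and use the one-signedness of the summands to kill \emph{every} term $T_{ki}^{(n)}$; passing to the limit along the norm-convergent subsequence then yields $x^{*}_{k}x^{*}_{i}a_{ki}(b_{k}-b_{i})=0$ for all $k<i$, so $\xb^{*}\in riS$ would force $\frak{f}(V)=0$, i.e.\ $V=Id$, contradicting $\xb_{0}\in S\setminus Fix(V)$. This buys you: no case analysis, no ratio trick (and hence no need for the lower bound $\f_{{\bf b}_{\uparrow}}\geq b_{1}$ or the compactness/convexity digression), and a single weight working for all pairs simultaneously; the paper's pair-specific version, on the other hand, is the form that is reused later (Corollary~\ref{cor_LLF_A} extracts the statement ``$x^{*}_{k_{0}}=0$ or $x^{*}_{i_{0}}=0$'' for a fixed pair, which your global argument also delivers but which the authors read off directly from their setup). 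Two cosmetic remarks: the inclusion $\w_{V}(\xb_{0})\subset S$ needs only Lemma~\ref{lem1}, so invoking Proposition~\ref{prop_w=w} and Theorem~\ref{omega=omega} is heavier than necessary (though not circular, as those results precede this theorem and do not use it); and your sign argument for ``sum of non-positive terms tending to $0$ implies each term tends to $0$'' is exactly the right justification and is legitimate here because the rearranged double series converges absolutely, as established in the proof of Theorem~\ref{thm_Ly_D_up}.
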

\begin{proof} Let $\frak{f}(V)=(a_{ki})$. Due to $(a_{ki})\in\mathbb{A}^+$ there exists a pair $(k_0,i_0)\in\bn^2$ such that
$k_0<i_0$ and $a_{k_0i_0}>0$.

Define a sequence ${{\bf b}}_{\uparrow}=(b_1,b_2,\dots)$ by
\begin{eqnarray}\label{eqn_cho_c_con_PW_>0}
b_k  = \left\{
\begin{array}{ll}
2 -\dfrac{1}{k} & \textmd{ for } k=1,\dots i_{0}, \\ & \\
2 -\dfrac{1}{i_{0}} & \textmd{ for } k \geq i_{0}+1
\end{array}
\right.
\end{eqnarray}
		
Then the functional
$$
\f_{{\bf b}_{\uparrow}}(\xb) = \sum\limits_{k=1}^{\infty}b_{k} x_{k}
$$
is a Lyapunov function for $V$
(see Theorem \ref{thm_Ly_D_up}).
	
Take any $ \xb^{*} \in \w_{V}(\xb_{0}) $, this means there exists a subsequence $\{n_{j}\} $ such that
\begin{equation}\label{yul1}	
V^{n_{j}}(\xb_{0})\stackrel{\norm{\cdot}}{\longrightarrow}\xb^{*},\ \ \ \mbox{as}\ \ j\to\infty.
\end{equation}
The continuity of $\varphi_{{\bf b}_\uparrow}$ implies
$$
\f_{{\bf b}_{\uparrow}} \left(V^{n_{j}}(\xb_{0})\right) \rightarrow \f_{{\bf b}_{\uparrow}}\left(\xb^{*}\right),\ \ \ j\to\infty.
$$
	
Using $ \sum\limits_{i=1}^{\infty}x_{i}=1 $, for any $\xb\in S$, we get
\begin{eqnarray} \label{eqn_LLF_thm_B}
\f_{{\bf b}_{\uparrow}}(\xb)&=&\sum\limits_{k=2}^{\infty}{b}_{k}x_{k} +{b}_{1}
\left( 1- \sum\limits_{k=2}^{\infty}x_{k}\right)\nonumber\\
&=&\sum\limits_{k=2}^{\infty}({b}_{k}-{b}_{1})x_{k}+{b}_{1} \geq {b}_{1}.
\end{eqnarray}
Therefore, $\f_{{\bf b}_{\uparrow}}(\xb^{*})\geq {b}_{1}$.

From \eqref{eqn_cho_c_con_PW_>0}, it follows that
\begin{eqnarray}\label{eqn_2C}
\f_{{\bf b}_{\uparrow} }(\xb) = \sum\limits_{k=1}^{i_{0}} \left(b_{k} - b_{i_0+1} \right) x_{k}
+ b_{i_{0}+1}.
\end{eqnarray}
We consider two cases.
First we assume that $\f_{{{\bf b}}_{\uparrow} }(\xb^{*}) ={b}_{1}$. Since the function \eqref{eqn_2C}
is defined on a compact set
$$
\triangle^{i_{0}} = \left\{ \xb = (x_{1},x_2,\dots,x_{i_0})\in \br^{i_{0}};\ x_{k}\in [0,1],\ 1\leq k\leq i_0\right\}
$$
and $\f_{{{\bf b}}_{\uparrow} }$ is convex on $\triangle^{i_{0}}$,
therefore it attains its unique minimum at $ \xb =\eb_{1} $, which yields $\xb^{*}=\eb_1$, so $\xb^{*}\in\partial S$.
	
Now, we suppose that
\begin{eqnarray}\label{eqn_tre_conve}
\varphi_{{{\bf b}}_{\uparrow}} \left( \xb^{*} \right)>{b}_{1}
\end{eqnarray}
Then from \eqref{yul1} one gets
\begin{eqnarray}
1&=&\lim\limits_{j\rightarrow\infty}\dfrac{\f_{{{\bf b}}_{\uparrow}}
\left(V^{n_j+1}(\xb_{0})\right)-{b}_{1}}{\f_{{{\bf b}}_{\uparrow}}\left( V^{n_j}(\xb_{0})\right)-{b}_{1}} \nonumber \\
&=&\lim\limits_{j\rightarrow \infty} \dfrac{\sum\limits_{k=1}^{\infty}{b}_{k} (V^{n_j+1}(\xb_{0}))_{k}
-{b}_{1}}{\sum\limits_{k=1}^{\infty}{b}_{k} (V^{n_j}(\xb_{0}))_{k}-{b}_{1}} \nonumber \\
&=& \lim\limits_{j\rightarrow \infty} \dfrac{\sum\limits_{k=1}^{\infty}{b}_{k} (V^{n_j}(\xb_{0}))_{k}
\left( 1 + \sum\limits_{i=1}^{\infty} a_{ki} (V^{n_j}(\xb_{0}))_{i}\right)-{b}_{1}}{\sum\limits_{k=1}^{\infty}{b}_{k} (V^{(n_j)}(\xb_{0}))_{k}
-{b}_{1}} \nonumber \\
&=&\lim\limits_{j\rightarrow\infty} \left( 1 +\dfrac{ \sum\limits_{k=1}^{\infty}{b}_{k}
(V^{n_j}(\xb_{0}))_{k} \sum\limits_{i=1}^{\infty} a_{ki}V^{n_j}(\xb_{0})_{i} }{\sum\limits_{k=1}^{\infty}{b}_{k} (V^{n_j}(\xb_{0}))_{k} -
{b}_{1}}\right) \nonumber
\end{eqnarray}
Therefore,
\begin{eqnarray}
0=\lim\limits_{j\rightarrow \infty}\dfrac{\sum\limits_{k=1}^{\infty}{b}_{k}(V^{n_j}(\xb_{0}))_{k}
\sum\limits_{i=1}^{\infty} a_{ki}(V^{n_j}(\xb_{0}))_{i} }{\sum\limits_{k=1}^{\infty}{b}_{k} (V^{n_j}(\xb_{0}))_{k}-
{b}_{1}}\nonumber
\end{eqnarray}
Due to \eqref{eqn_tre_conve} we infer
\begin{eqnarray}\label{eqn_tre_conve-2}
0 = \lim\limits_{j\rightarrow \infty} \sum\limits_{k=1}^{\infty}{b}_{k} (V^{n_j}(\xb_{0}))_{k}
\sum\limits_{i=1}^{\infty} a_{ki}(V^{n_j}(\xb_{0}))_{i}
\end{eqnarray}
On the other hand, if $ \xb^{*} \in ri S $, by taking $m>i_0$,
and using the condition of the theorem, we obtain
	\begin{eqnarray}\label{eqn_LLF_1D}
	\sum\limits_{k=1}^{\infty}{b}_{k} (V^{n_j}(\xb_{0}))_{k}\sum\limits_{i=1}^{\infty} a_{ki}(V^{n_j}(\xb_{0}))_{i}
	&=&  \sum\limits_{k=1}^{\infty} \sum\limits_{i=k+1}^{\infty}(V^{n_j}(\xb_{0}))_{k} (V^{n_j}(\xb_{0}))_{i}
a_{ki} ({b}_{k}-{b}_{i}) \nonumber \\
	&\leq&\sum\limits_{k=1}^{m}\sum\limits_{i=k+1}^{m} (V^{n_j}(\xb_{0}))_{k} (V^{n_j}(\xb_{0}))_{i}
a_{ki}({b}_{k}-{b}_{i})\nonumber \\
	&\leq& (V^{n_{j}}(\xb))_{k_{0}}(V^{n_{j}}(\xb))_{i_{0}} a_{k_{0}i_{0}} ({b}_{k_{0}}-{b}_{i_{0}})
	\end{eqnarray}
Now	taking the limit $j\to\infty$ on \eqref{eqn_LLF_1D} one finds
\begin{eqnarray}\label{eqn_tre_conve-3}
\lim\limits_{{j} \rightarrow \infty} \sum\limits_{k=1}^{\infty}b_{k} (V^{n_j}(\xb_{0}))_{k}
\sum\limits_{i=1}^{\infty} a_{ki}(V^{n_j}(\xb_{0}))_{i} \leq x_{k_{0}}^{*}x_{i_{0}}^{*}
a_{k_{0}i_{0}} ({b}_{k_{0}}-{b}_{i_{0}}) <0,
\end{eqnarray}
which contradicts to \eqref{eqn_tre_conve-2}, hence $\xb^{*}\in \partial S$.
This completes the proof.
\end{proof}
Using the same argument of Theorem \ref{thm_LLF_B}, we can prove the following result.

\begin{cor}\label{cor_LLF_A}
Let $ V\in\mathcal V^+$ and $\frak{f}(V)= \left( a_{ki} \right)$.
If there exist some $ k_{0},i_{0} \in\bn $ such that $ a_{k_{0}i_{0}}>0 $,
then for $ \xb^{*} \in \w_{V}(\xb_{0})  $ we have
\begin{eqnarray}\label{eqn_LLF_A1}
\mbox{either }x^{*}_{k_{0}} = 0 \textmd{ or } x^{*}_{i_{0}} = 0
\end{eqnarray}
\end{cor}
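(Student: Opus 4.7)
The plan is to argue by contradiction, essentially replaying the proof of Theorem \ref{thm_LLF_B} but with the pair $(k_0,i_0)$ prescribed by the hypothesis rather than chosen freely within the argument. First observe that the hypotheses $V\in\mathcal V^+$ and $a_{k_0i_0}>0$ force $k_0<i_0$: the skew-symmetry of $\frak{f}(V)=(a_{ki})$ together with the defining inequality $a_{ki}\geq 0$ for $k<i$ rules out the reverse order.

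Suppose for contradiction that some $\xb^*\in\w_V(\xb_0)$ satisfies $x^*_{k_0}>0$ and $x^*_{i_0}>0$, and let $\{n_j\}$ be a subsequence with $V^{n_j}(\xb_0)\stackrel{\|\cdot\|}{\longrightarrow}\xb^*$. I would then reuse the Lyapunov function ${\bf b}_\uparrow$ defined exactly as in \eqref{eqn_cho_c_con_PW_>0} but with breakpoint at the prescribed $i_0$, so that by Theorem \ref{thm_Ly_D_up} the functional $\varphi_{{\bf b}_\uparrow}$ is a Lyapunov function for $V$ and $\varphi_{{\bf b}_\uparrow}(V^{n_j}(\xb_0))\to\varphi_{{\bf b}_\uparrow}(\xb^*)$; crucially, this choice gives $b_{k_0}<b_{i_0}$.

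The remainder splits into the two cases already treated in Theorem \ref{thm_LLF_B}. If $\varphi_{{\bf b}_\uparrow}(\xb^*)=b_1$, then the representation \eqref{eqn_2C} together with the convexity of $\varphi_{{\bf b}_\uparrow}$ on the compact set $\triangle^{i_0}$ shows that its minimum is attained uniquely at $\eb_1$, forcing $\xb^*=\eb_1$; but then $x^*_{i_0}=0$ (since $i_0\geq 2$), contradicting $x^*_{i_0}>0$. If instead $\varphi_{{\bf b}_\uparrow}(\xb^*)>b_1$, then the same ratio computation leading to \eqref{eqn_tre_conve-2} yields
\[
\lim_{j\to\infty}\sum_{k=1}^\infty b_k (V^{n_j}(\xb_0))_k\sum_{i=1}^\infty a_{ki}(V^{n_j}(\xb_0))_i=0,
\]
whereas the bound \eqref{eqn_LLF_1D} combined with $x^*_{k_0}x^*_{i_0}>0$, $a_{k_0 i_0}>0$ and $b_{k_0}<b_{i_0}$ forces this same limit to be at most $x^*_{k_0}x^*_{i_0}a_{k_0 i_0}(b_{k_0}-b_{i_0})<0$, a contradiction.

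The main point requiring care, rather than an actual obstacle, is that the corollary designates the pair $(k_0,i_0)$ in advance, so the Lyapunov function must be built with its breakpoint aligned with this prescribed $i_0$, not at some pair chosen opportunistically by the proof. Once that alignment is made, the estimate \eqref{eqn_LLF_1D} delivers a sign depending solely on $a_{k_0 i_0}$, and the rest is a verbatim specialization of the argument of Theorem \ref{thm_LLF_B}.
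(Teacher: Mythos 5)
Your proposal is correct and follows essentially the same route as the paper: the paper also assumes $x^*_{k_0}>0$ and $x^*_{i_0}>0$ and derives the contradiction from \eqref{eqn_tre_conve-2} and \eqref{eqn_tre_conve-3} in the proof of Theorem \ref{thm_LLF_B}, with the Lyapunov function ${\bf b}_\uparrow$ built from the prescribed pair $(k_0,i_0)$. Your explicit handling of the case $\f_{{\bf b}_\uparrow}(\xb^*)=b_1$ (where $\xb^*=\eb_1$ already forces $x^*_{i_0}=0$ since $i_0\geq 2$) merely spells out a detail the paper leaves implicit in its two-line citation.
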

\begin{proof}
Let us assume that
\begin{eqnarray}\label{eqn_LLF_A2}
x_{k_{0}}^{*} >0 \textmd{ and  } x_{i_{0}}^{*} > 0.
\end{eqnarray}
Then from \eqref{eqn_tre_conve-2} and \eqref{eqn_tre_conve-3} one gets the desired statement.
\end{proof}

From Theorem \ref{thm_LLF_B} we infer that the limiting set of trajectory of any operator taken from the class $\mathcal V^+$ belongs to
$\partial S$, but unfortunately we do not know about the location of $\omega_V(\xb)$ in  $\partial S$.  This problem is tricky for the entire class. Next results partially answers to the mentioned  question for some subclass of $\mathcal V^+$. 

\begin{prop} \label{cor_LLF_E}
Let $ V\in\mathcal V^+$ and $\frak{f}(V)= \left( a_{ki} \right)$. 
Assume there exists an integer $n_{0}\geq1$ satisfying $ a_{n_{0}i}>0 $ for all $ i > n_{0} $. Then for any initial point
$ \xb = (0,\dots, 0, x_{n_{0}}, x_{n_{0}+1},\dots) \in S\setminus Fix(V)$ such that $ x_{n_{0}} >0 $,
we have
$$
\w_{V}(\xb)=  \{{\bf e}_{n_{0}} \}
$$
\end{prop}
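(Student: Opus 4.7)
My plan is to show that the orbit $\xb^{(n)} := V^n(\xb)$ stays in the face where coordinates $1,\dots,n_0-1$ vanish, that its $n_0$-th coordinate is monotone nondecreasing, and then combine this with the uniqueness/boundary results of Section 5 and Corollary \ref{cor_LLF_A} to pin down the unique limit.

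First I would note $\xb \in \Gamma_{\{1,\dots,n_0-1\}}$, so by Proposition \ref{5vo5}(i) every iterate satisfies $x^{(n)}_k = 0$ for $k < n_0$. The results already established in Section 5 provide, for $V \in \mathcal{V}^+$, a unique limit point $\xb^*$ for both $\w_V(\xb)$ and $\w_V^{(w)}(\xb)$: Proposition \ref{Far} gives nonemptiness of $\w_V^{(w)}(\xb)$, Proposition \ref{thm_main} gives $|\w_V^{(w)}(\xb)|=1$, Proposition \ref{prop_w=w}(i) gives $\xb^* \in S$, and Theorem \ref{omega=omega}(i) upgrades this to $\w_V(\xb) = \w_V^{(w)}(\xb) = \{\xb^*\}$; Theorem \ref{thm_LLF_B} additionally places $\xb^*$ in $\partial S$. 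The face invariance also forces $x^*_k = 0$ for $k < n_0$, and a standard argument (every subsequence of $\{\xb^{(n)}\}$ in the sequentially compact set $\mathbf{B}_1^+$ has a pointwise convergent sub-subsequence, whose limit must be $\xb^*$) gives the whole-sequence convergence $\xb^{(n)} \stackrel{\mathrm{p.w.}}{\longrightarrow} \xb^*$.

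The crucial step is proving $x^*_{n_0} > 0$. Because all lower coordinates vanish along the orbit, the Volterra recursion collapses to
$$x^{(n+1)}_{n_0} = x^{(n)}_{n_0}\Bigl(1 + \sum_{i>n_0} a_{n_0 i}\, x^{(n)}_i\Bigr) \geq x^{(n)}_{n_0},$$
using $a_{n_0 i} > 0$ and $x^{(n)}_i \geq 0$. So $\{x^{(n)}_{n_0}\}$ is nondecreasing and bounded by $1$; it converges to some $c \geq x_{n_0} > 0$, and pointwise convergence gives $x^*_{n_0} = c > 0$. Finally, I would apply Corollary \ref{cor_LLF_A} with $(k_0, i_0) = (n_0, i)$ for each $i > n_0$: since $a_{n_0 i} > 0$ and $x^*_{n_0} > 0$, we conclude $x^*_i = 0$. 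Combined with $x^*_k = 0$ for $k < n_0$ and $\|\xb^*\|=1$, this forces $\xb^* = \eb_{n_0}$.

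The main obstacle I anticipate is precisely the positivity of $x^*_{n_0}$: a priori the mass at position $n_0$ could leak into coordinates $i > n_0$, and Corollary \ref{cor_LLF_A} alone only rules out simultaneous positivity of $x^*_{n_0}$ and $x^*_i$. The monotonicity calculation above resolves this, but in an essential way it uses both the face invariance (so no lower coordinates contribute a negative $a_{n_0 i} x_i$ term) and the strict strict positivity of $a_{n_0 i}$ for every $i > n_0$; weakening either hypothesis would break the argument.
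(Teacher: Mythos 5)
Your proposal is correct and follows essentially the same route as the paper's own proof: face invariance kills the coordinates below $n_0$, the collapsed recursion $x^{(n+1)}_{n_0}=x^{(n)}_{n_0}\bigl(1+\sum_{i>n_0}a_{n_0i}x^{(n)}_i\bigr)\geq x^{(n)}_{n_0}$ gives $x^*_{n_0}\geq x_{n_0}>0$, and Corollary \ref{cor_LLF_A} then forces $x^*_i=0$ for all $i>n_0$, so $\xb^*=\eb_{n_0}$. Your extra bookkeeping (explicitly invoking Propositions \ref{Far}, \ref{thm_main}, \ref{prop_w=w} and Theorem \ref{omega=omega} for existence and uniqueness of the limit point) only makes explicit what the paper compresses into its opening appeal to Theorem \ref{omega=omega}.
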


\begin{proof} First observe that Theorem \ref{omega=omega} implies
$\omega_V(\xb_0)=\omega_V^{(w)}(\xb_0)\in S$ for any
$\xb_0\in S$. Let $ \xb = (0,\dots,0,x_{n_{0}},x_{n_{0}+1}, \dots,) $, we get
\begin{eqnarray}\label{eqn_LLF_1E}
(V(\xb))_{k} = 0 \textmd{ for all } k\in\{1,\dots,n_{0}-1\}.
\end{eqnarray}
Then for $\xb^*\in\omega_V(\xb)$ we have $ x_{i}^{*}=0 $ for any $i\in\{1,\dots,n_{0}-1\}$.
The assumption $ a_{n_{0}i}>0 $, for all $ i > n_{0} $ implies that
\begin{eqnarray*}
(V(\xb))_{n_{0}}&=&x_{n_{0}}
\left( 1 + \sum\limits_{i=1}^{\infty} a_{n_{0}i}x_{i} \right)\\
&=&x_{n_{0}} \left( 1 + \sum\limits_{i=n_{0}+1}^{\infty} a_{n_{0}i}x_{i} \right)\\
&\geq& x_{n_{0}} >0
\end{eqnarray*}
	Therefore, for any $ m \in \bn $
$$
(V^{m+1}(\xb))_{n_{0}} \geq (V^{m}(\xb))_{n_{0}} >0.
$$
Hence, by taking the limit $ m \rightarrow \infty $, we obtain
\begin{eqnarray}\label{eqn_LLF_2E}
x_{n_{0}}^{*} >0
\end{eqnarray}
	
Then Corollary \ref{cor_LLF_A} with \eqref{eqn_LLF_2E} yields
$$
x_{i}^{*}=0 \textmd{ for any } i > n_{0}.
$$
This means $\xb^{*}={\bf e}_{n_{0}}$. The proof is complete.
\end{proof}

\begin{thm}\label{thm_LLF_E}
Let $ V\in\mathcal V^+$ and $\frak{f}(V)= \left( a_{ki} \right)$.
Assume that $ a_{ki}>0 $ for all $ i > k $. Then for any initial point $\xb\in S\setminus Fix(V)$
we have
$$
\w_{V}(\xb)=  \{{\bf e}_{\min\{supp(\xb)\}} \}
$$
\end{thm}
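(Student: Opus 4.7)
The plan is to reduce the theorem to a direct application of the preceding Proposition \ref{cor_LLF_E}. The hypothesis $a_{ki}>0$ for all $i>k$ is strictly stronger than the hypothesis of that proposition, and the shape requirement on the initial point can be met simply by reading off the minimum of its support.

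First I would set $n_0 := \min\{supp(\xb)\}$; since $\xb \in S$ satisfies $\sum_k x_k = 1$, the support is non-empty and $n_0$ is well defined, with $x_{n_0}>0$ and $x_k=0$ for all $k<n_0$. Hence $\xb = (0,\dots,0,x_{n_0},x_{n_0+1},\dots)$, which is precisely the form of initial point demanded in Proposition \ref{cor_LLF_E}. The face-invariance recorded in Proposition \ref{5vo5} ensures that the coordinates indexed by $1,\dots,n_0-1$ remain zero along the entire trajectory, but this is already built into the statement of the previous proposition, so no separate verification is required.

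Next I would verify the coefficient hypothesis: since $a_{ki}>0$ holds for \emph{every} pair with $i>k$, specializing to $k=n_0$ gives $a_{n_0 i}>0$ for all $i>n_0$, which is exactly what Proposition \ref{cor_LLF_E} needs. The assumption $\xb \in S\setminus Fix(V)$ in our theorem is identical to the non-fixed-point assumption used in the proposition. Applying Proposition \ref{cor_LLF_E} therefore yields $\w_V(\xb) = \{\eb_{n_0}\} = \{\eb_{\min\{supp(\xb)\}}\}$, which is the desired conclusion.

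There is essentially no obstacle in this argument: the theorem is an immediate instantiation of the previously established proposition, with all of the analytic work (use of the Lyapunov functional $\f_{{\bf b}_\uparrow}$, the location result in $\partial S$, and Corollary \ref{cor_LLF_A} forcing either $x^*_{n_0}=0$ or $x^*_i=0$ for $i>n_0$) already having been carried out. The one mildly substantive observation is that allowing $\min\{supp(\xb)\}$ to depend on $\xb$ is harmless: each individual initial point separately falls into a setting where Proposition \ref{cor_LLF_E} can be invoked with its own choice of $n_0$.
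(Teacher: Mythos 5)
Your proposal is correct and is essentially the paper's own proof: the paper likewise sets $m_0=\min\{i: x_i\neq 0\}$, observes that the blanket hypothesis $a_{ki}>0$ for $i>k$ gives $a_{m_0 i}>0$ for all $i>m_0$, and then invokes Proposition \ref{cor_LLF_E} to conclude $\w_V(\xb)=\{\eb_{m_0}\}$.
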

\begin{proof}
Let $\xb\in S\setminus Fix(V)$. Denote
$$
m_0=\min_{i\geq1}\{i: x_i\neq0\}.
$$
Then $a_{m_0i}>0$ for every $i>m_0$, so
Proposition \ref{cor_LLF_E} implies $\omega_V(\xb)=\{{\eb}_{m_0}\}$.
\end{proof}

\section{Dynamics of operators from the class $\mathcal V^-$}

In this section, we are going to study the set $\omega_V(\xb)$ and $\omega_V^{(w)}(\xb)$
for operators
from $\mathcal V^-$.
\begin{thm}\label{thm_LP_con_PW_<0}
Let $V\in\mathcal V^-$ and $\xb_0\in riS\setminus Fix(V)$.
Then $\w_V^{(w)}(\xb_0)\subset\partial S_r$ for some $r\leq1$.
\end{thm}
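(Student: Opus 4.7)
The plan is to combine Proposition \ref{prop_w=w} $(ii)$, which already furnishes $\w_V^{(w)}(\xb_0) \subset S_r$ for some $r \leq 1$, with a strictly monotone quasi Lyapunov function so as to rule out accumulation points lying in the relative interior $riS_r$. I would argue by contradiction: assume that some $\xb^* \in \w_V^{(w)}(\xb_0)$ satisfies $x_k^* > 0$ for every $k \in \bn$. Since $\xb_0 \notin Fix(V)$, we have $V \neq Id$, and because $\frak{f}(V) \in \mathbb{A}^-$ is nonzero, there must exist indices $k_0 < i_0$ with $a_{k_0 i_0} < 0$; this pair will supply the obstruction.

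Next I would take any strictly decreasing sequence ${\bf b}_\downarrow \in c_0$, say $b_k = 1/k$, and invoke Theorem \ref{thm_qLf} to obtain that $\f_{{\bf b}_\downarrow}$ is a quasi Lyapunov function and that $\{\f_{{\bf b}_\downarrow}(V^n(\xb_0))\}$ is decreasing and bounded below by $0$, hence convergent; its consecutive increments must therefore tend to zero. Expanding the increment exactly as in the proofs of Theorem \ref{thm_Ly_D_up} and Theorem \ref{thm_qLf} gives
\[
\f_{{\bf b}_\downarrow}(V(\yb)) - \f_{{\bf b}_\downarrow}(\yb) = \sum_{k=1}^\infty \sum_{i=k+1}^\infty a_{ki}(b_k - b_i)\, y_k y_i,
\]
and every summand is non-positive because $a_{ki} \leq 0$ and $b_k - b_i > 0$ whenever $k < i$.

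The decisive step is then to isolate the $(k_0,i_0)$-term along a subsequence $\{n_j\}$ realising $\xb^*$ pointwise. Writing $\yb^{(j)} := V^{n_j}(\xb_0)$, the non-positivity of every summand yields
\[
-\bigl(\f_{{\bf b}_\downarrow}(V(\yb^{(j)})) - \f_{{\bf b}_\downarrow}(\yb^{(j)})\bigr) \geq -a_{k_0 i_0}(b_{k_0} - b_{i_0})\, y^{(j)}_{k_0} y^{(j)}_{i_0}.
\]
Pointwise convergence forces $y^{(j)}_{k_0} \to x^*_{k_0} > 0$ and $y^{(j)}_{i_0} \to x^*_{i_0} > 0$, so the right-hand side converges to the strictly positive number $-a_{k_0 i_0}(b_{k_0} - b_{i_0}) x^*_{k_0} x^*_{i_0}$, whereas the left-hand side converges to $0$ by the monotone convergence of $\{\f_{{\bf b}_\downarrow}(V^n(\xb_0))\}$. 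This contradiction forces $\xb^* \notin riS_r$, so $\w_V^{(w)}(\xb_0) \subset \partial S_r$.

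The main obstacle I anticipate is not conceptual but bookkeeping: one has to justify that the double series representing the increment converges absolutely and may be reorganised by pairing $(k,i)$ with $(i,k)$ via the skew-symmetry $a_{ki} = -a_{ik}$. This is already essentially contained in the estimate used in the proof of Theorem \ref{thm_Ly_da<0}, and the key requirement $b_{k_0} > b_{i_0}$ is automatic for any strictly decreasing ${\bf b}_\downarrow$, so no delicate tailoring of the Lyapunov sequence is needed. The only point really requiring care is that the quasi Lyapunov property relies on pointwise continuity of $\f_{{\bf b}_\downarrow}$ on ${\bf B}_1^+$ (Lemma \ref{lem5}), which is precisely the reason why ${\bf b}_\downarrow$ must be taken in $c_0$ rather than only in $\ell^\infty$.
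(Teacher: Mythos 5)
Your proposal is correct, and it is in fact a cleaner route than the one taken in the paper, although the underlying idea (quasi Lyapunov functionals from Theorem \ref{thm_qLf} plus a contradiction at an interior weak limit point, isolating a single strictly negative term of the increment) is the same. The paper works with the $m$-dependent sequence ${\bf b}^{[m]}_{\downarrow}$ of \eqref{c_k} attached to the limit point $\ab$, splits into the cases $\f_{{\bf b}^{[m]}_\downarrow}(\ab)=\norm{\ab}$ and $\f_{{\bf b}^{[m]}_\downarrow}(\ab)<\norm{\ab}$, obtains the vanishing of the increments along the subsequence through the ratio limit \eqref{eqn_lim_=1}, and then needs the rather heavy bookkeeping with $N$, $\d/3$, $\d/4$ and the pieces $I_1,I_2,II_1,II_2$ -- essentially because ${\bf b}^{[m]}_{\downarrow}$ is constant on the first $m$ coordinates, so a naive single-term bound can degenerate when both distinguished indices fall below $m$. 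You avoid all of this by fixing one strictly decreasing sequence in $c_0$ (e.g.\ $b_k=1/k$), for which $b_{k_0}>b_{i_0}$ holds automatically for any pair $k_0<i_0$ with $a_{k_0i_0}<0$ (such a pair exists since $\xb_0\notin Fix(V)$ forces $V\neq Id$, i.e.\ $\frak{f}(V)\neq0$), and by noting that the increments of the monotone bounded sequence $\{\f_{{\bf b}_\downarrow}(V^n(\xb_0))\}$ tend to zero directly, with no ratio argument; since all summands of $\f_{{\bf b}_\downarrow}(V(\yb))-\f_{{\bf b}_\downarrow}(\yb)=\sum_{k<i}a_{ki}(b_k-b_i)y_ky_i$ share one sign, the single $(k_0,i_0)$ term bounds the increment and pointwise convergence along $\{n_j\}$ yields the contradiction. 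Two small points to make explicit: the rearrangement into the double sum over $k<i$ is licensed by the absolute convergence estimate from Theorem \ref{thm_Ly_da<0} (you acknowledge this), and the possible limit point $\ab={\bf 0}$ is harmless since it has no positive coordinate and so lies outside $riS_r$ in any case. As a bonus, your argument immediately proves the stronger statement recorded in Corollary \ref{cor_LLF_ome_<0}, namely that for every pair with $a_{k_0i_0}<0$ one of the coordinates $x^*_{k_0},x^*_{i_0}$ of any weak limit point vanishes.
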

\begin{proof}
Due to Proposition \ref{prop_w=w} $(ii)$ we have
$\w_V^{(w)}(\xb_0)\subset S_r$ for some $r\leq1$.

Let $\ab\in\w_V^{(w)}(\xb_0)$. Without lost of
generality, we may assume that $\norm{\ab}\neq0$.
Then for any $\varepsilon>0$ we can find $m\geq1$
such that
$$
\sum_{k=1}^ma_k>\norm{\ab}-\varepsilon.
$$
Furthermore, for a given $m\geq1$ we choose ${\bf b}_\downarrow\in\ell^\infty$ as \eqref{c_k}.
Then due to the proof of statement $(ii)$ of Proposition \ref{prop_w=w},
$\varphi_{{\bf b}_\downarrow}$ is a quasi Lyapunov function for $V$, and
the sequence $\{\varphi_{{\bf b}_\downarrow}(V^n(\xb))\}$ is decreasing for any $\xb\in S$.   	

It is easy to check that $\varphi_{{\bf b}_\downarrow}(\yb)\leq\norm{\ab}$
for any $\yb\in S_{\norm{\ab}}$. Moreover,
$\varphi_{{\bf b}_\downarrow}(\yb)=\norm{\ab}$ if and only if $y_k=0$ for any $k>m$. So, if
$\varphi_{{\bf b}_\downarrow}(\ab)=\norm{\ab}$ we have $\ab\in\partial S_{\norm{\ab}}$.

Let $\frak{f}(V)= (a_{ki})$, then $ (a_{ki})\in\mathbb{A}^-$. Now we suppose that $\varphi_{{\bf b}_\downarrow}(\ab)<\norm{\ab}$.
Assume that $\ab\in riS_{\norm{\ab}}$.
Since $\ab\in\w_V^{(w)}(\xb_0)$, there is subsequence
$\{n_j\}$ such that $V^{n_j}(\xb_0)\stackrel{\mathrm{p.w.}}{\longrightarrow}\ab$ as $j\to\infty$.
This implies that
\begin{eqnarray}\label{eqn_lim_=1}
\lim\limits_{j\rightarrow \infty} \dfrac{ \f_{{\bf b}_{\downarrow}}(V^{n_{j}+1}(\xb_0))
-\norm{\ab}}{\f_{{{\bf b}}_{\downarrow} }(V^{n_j}(\xb_0))-\norm{\ab}} = 1
\end{eqnarray}
From
\begin{eqnarray}
\lim\limits_{j\rightarrow \infty} \dfrac{ \f_{{{\bf b}}_{\downarrow} }(V^{n_j+1}(\xb_0)) - \norm{\ab}}
{\f_{{{\bf b}}_{\downarrow} }(V^{n_j}(\xb_0))  - \norm{\ab}}
&=& \lim\limits_{j\rightarrow \infty} \dfrac{\sum\limits_{k=1}^{\infty}{b}_{k}
(V^{n_j}(\xb_0))_{k} \left( 1+ \sum\limits_{i=1}^{\infty} a_{ki} (V^{n_j}(\xb_0))_{i}\right)-
\norm{\ab}}{\sum\limits_{k=1}^{\infty}{b}_{k} (V^{n_j}(\xb_0))_{k}
- \norm{\ab} } \nonumber \\
&=& \lim\limits_{j \rightarrow \infty} \left(   1 + \dfrac{ \sum\limits_{k=1}^{\infty} {b}_{k} (V^{n_j}(\xb_0))_{k}
\sum\limits_{i=1}^{\infty} a_{ki} (V^{n_j}(\xb_0))_{i}}{\sum\limits_{k=1}^{\infty}{b}_{k}
(V^{n_j}(\xb_0))_{k} - \norm{\ab}} \right) \nonumber
\end{eqnarray}
and \eqref{eqn_lim_=1} one finds
$$
\lim\limits_{j \rightarrow \infty} \dfrac{ \sum\limits_{k=1}^{\infty} {b}_{k} (V^{n_j}(\xb_0))_{k}
\sum\limits_{i=1}^{\infty} a_{ki} (V^{n_j}(\xb_0))_{i}}
{\sum\limits_{k=1}^{\infty} {b}_{k} (V^{n_j}(\xb_0))_{k} - \norm{\ab} } =0
$$
Due to $ \f_{{{\bf b}}_{\uparrow} }(\ab) <\norm{\ab}$, we conclude
\begin{eqnarray}\label{eqn_lim=0}
\lim\limits_{n \rightarrow \infty} \sum\limits_{k=1}^{\infty} {b}_{k} (V^{n_j}(\xb_0))_{k}
\sum\limits_{i=1}^{\infty} a_{ki} (V^{n_j}(\xb_0))_{i} = 0.
\end{eqnarray}
Now, denote
$$
\d= \dfrac{a_{k_{0}}a_{i_{0}}}{2} a_{k_{0}i_{0}} \left( {b}_{k_{0}} - {b}_{i_{0}} \right).
$$
It is clear that $ \d < 0 $, since ${b}_{k_{0}} < {b}_{i_{0}}$.
From \eqref{eqn_lim=0} one finds $ N_{1} \in \bn $ such that
\begin{eqnarray} \label{eqn_more_d4}
\sum\limits_{k=1}^{\infty} {b}_{k} (V^{n_j}(\xb_0))_{k} \sum\limits_{i=1}^{\infty} a_{ki} (V^{n_j}(\xb_0))_{i} > \dfrac{\d}{4}
\end{eqnarray}
for all $ n_j \geq N_{1} $.
Now we choose $ N_{2} \in \bn $ such that $ N_{2} \geq N_{1} $ and
$$
\abs{ (V^{N_{2}}(\xb_0))_{k_{0}}  (V^{N_{2}}(\xb_0))_{i_{0}}  - a_{k_{0}}a_{i_{0}} } < \dfrac{a_{k_{0}}a_{i_{0}}}{2}.
$$
This yields that
\begin{eqnarray}\label{eqn_6c_con_PW_>0}
(V^{N_{2}}(\xb_0))_{i_{0}}  (V^{N_{2}}(\xb_0))_{k_{0}}   >\dfrac{a_{k_{0}}a_{i_{0}}}{2}
\end{eqnarray}

Since $ V^{N_{2}}(\xb_0) \in S $, then one can choose $ \widetilde{N}  $ such that
$$
\sum\limits_{k = \widetilde{N}+1}^{\infty}(V^{N_{2}}(\xb_0))_{k}\leq \dfrac{-\d}{6}
$$
and $ \overline{N} $ such that $ \overline{N}>i_{0}>k_{0} $. Put
$$
N = \max\{\widetilde{N},\overline{N}\}
$$
By simple calculation, one has
\begin{eqnarray}
\sum\limits_{k=1}^{\infty} {b}_{k} (V^{N_{2}}(\xb_0))_{k}
\sum\limits_{i=1}^{\infty} a_{ki} (V^{N_{2}}(\xb_0))_{i}
&=& \sum\limits_{k=1}^{\infty}{b}_{k} (V^{N_{2}}(\xb_0))_{k}
\bigg( \sum\limits_{i=1}^{N}a_{ki}(V^{N_{2}}(\xb_0))_{i} \nonumber\\[2mm]
&&+
\sum\limits_{i=N+1}^{\infty}a_{ki}(V^{N_{2}}(\xb_0))_{i} \bigg) \nonumber \\
&=& \sum\limits_{k=1}^{\infty}{b}_{k} (V^{N_{2}}(\xb_0))_{k} \sum\limits_{i=1}^{N}a_{ki}(V^{N_{2}}(\xb_0))_{i}\nonumber \\
&&+\sum\limits_{k=1}^{\infty}{b}_{k} (V^{N_{2}}(\xb_0))_{k} \sum\limits_{i=N+1}^{\infty}a_{ki}(V^{N_{2}}(\xb_0))_{i}
\end{eqnarray}	
Denote
\begin{eqnarray*}
&&I_{1} = \sum\limits_{k=1}^{\infty}{b}_{k} (V^{N_{2}}(\xb_0))_{k}
\sum\limits_{i=1}^{N}a_{ki}(V^{N_{2}}(\xb_0))_{i}, \\[2mm]
&&I_{2} = \sum\limits_{k=1}^{\infty}{b}_{k} (V^{N_{2}}(\xb_0))_{k}
\sum\limits_{i=N+1}^{\infty}a_{ki}(V^{N_{2}}(\xb_0))_{i}
\end{eqnarray*}

Let us estimate $ I_{1} $ and $ I_{2} $ one by one.
Taking into account $ I_{1} $, then one has
\begin{eqnarray}
I_{1} &=&\sum\limits_{k=1}^{\infty}\sum\limits_{i=1}^N ({b}_{k}-{b}_{i})a_{ki}(V^{N_{2}}(\xb_0))_{k}(V^{N_{2}}(\xb_0))_{i}\nonumber\\
&&+\sum\limits_{k=N+1}^{\infty}{b}_{k} (V^{N_{2}}(\xb_0))_{k} \sum\limits_{i=1}^{N}a_{ki}(V^{N_{2}}(\xb_0))_{i} \nonumber
\end{eqnarray}
Denote
\begin{eqnarray*}
&&II_{1} = \sum\limits_{k=1}^{\infty}\sum\limits_{i=1}^N (b_{k}-{b}_{i})a_{ki}(V^{N_{2}}(\xb_0))_{k}(V^{N_{2}}(\xb_0))_{i}, \\
&&II_{2} = \sum\limits_{k=N+1}^{\infty}{b}_{k} (V^{N_{2}}(\xb_0))_{k}
\sum\limits_{i=1}^{N}a_{ki}(V^{N_{2}}(\xb_0))_{i}
\end{eqnarray*}

Due to $ a_{ki}\geq 0 $ for all $ i >k $ and \eqref{eqn_6c_con_PW_>0} one gets
\begin{eqnarray}\label{eqn_7c_con_PW_>0}
II_{1} &=&\sum\limits_{k=1}^{N-1}\sum\limits_{i=k+1}^N ({b}_{k}-{b}_{i})a_{ki}
(V^{N_{2}}(\xb_0))_{k}(V^{N_{2}}(\xb_0))_{i} \nonumber \\
&&+\sum\limits_{k=N+1}^{\infty}\sum\limits_{i=1}^N ({b}_{k}-{b}_{i})a_{ki}
(V^{N_{2}}(\xb_0))_{k}(V^{N_{2}}(\xb_0))_{i} \nonumber \\[2mm]
&\leq& (V^{N_{2}}(\xb_0))_{k_{0}}(V^{N_{2}}(\xb_0))_{i_{0}}a_{k_{0}i_{0}}({b}_{k_{0}}-{b}_{i_{0}}) \nonumber \\[2mm]
&\leq& \dfrac{a_{k_{0}} a_{i_{0}}}{2} a_{k_{0}i_{0}}(b_{k_{0}} -{b}_{i_{0}}) = \d <0
\end{eqnarray}
Taking into account $ II_{2} $ we have
\begin{eqnarray}\label{eqn_9c_con_PW_>0}
\abs{II_{2}} &=& \abs{ \sum\limits_{k=N+1}^{\infty}{b}_{k} (V^{N_{2}}(\xb_0))_{k}
\sum\limits_{i=1}^{N}a_{ki}(V^{N_{2}}(\xb_0))_{i} } \nonumber \\
&\leq&  2 \sum\limits_{k=N+1}^{\infty} (V^{N_{2}}(\xb_0))_{k} \sum\limits_{i=1}^{N}
(V^{N_{2}}(\xb_0))_{i} \nonumber \\
&\leq& 2 \sum\limits_{k=N+1}^{\infty} (V^{N_{2}}(\xb_0))_{k} \nonumber \\
&\leq& \dfrac{-\d}{3}
\end{eqnarray}
Hence from \eqref{eqn_7c_con_PW_>0} and \eqref{eqn_9c_con_PW_>0}, we obtain
\begin{eqnarray}\label{eqn_10c_con_PW_>0}
I_{1} = II_{1}+II_{2} \leq \d - \dfrac{\d}{3} = \dfrac{2\d}{3} <0
\end{eqnarray}

From $ I_{2} $ we find
\begin{eqnarray}\label{eqn_11c_con_PW_>0}
\abs{I_{2}} &=& \abs{ \sum\limits_{k=1}^{\infty}{b}_{k}
(V^{N_{2}}(\xb_0))_{k} \sum\limits_{i=N+1}^{\infty}a_{ki}
(V^{N_{2}}(\xb_0))_{i} } \nonumber \\
&\leq& 2 \sum\limits_{k=1}^{\infty}
(V^{N_{2}}(\xb_0))_{k} \sum\limits_{i=N+1}^{\infty}(V^{N_{2}}(\xb_0))_{i} \nonumber \\
&\leq& 2 \sum\limits_{i=N+1}^{\infty} (V^{N_{2}}(\xb_0))_{i}
\sum\limits_{k=1}^{\infty}(V^{N_{2}}(\xb_0))_{k} \nonumber \\
&\leq& \dfrac{-\d}{3}
\end{eqnarray}

Therefore, from \eqref{eqn_10c_con_PW_>0} and \eqref{eqn_11c_con_PW_>0}, one gets
\begin{eqnarray}
\sum\limits_{k=1}^{\infty} {b}_{k} (V^{N_{2}}(\xb_0))_{k}
\sum\limits_{i=1}^{\infty} a_{ki} (V^{N_{2}}(\xb_0))_{i} \leq
\dfrac{2\d}{3} - \dfrac{\d}{3} = \dfrac{\d}{3} < 0
\end{eqnarray}

From \eqref{eqn_more_d4} it follows that
$$
\dfrac{\d}{4}< \sum\limits_{k=1}^{\infty} {b}_{k} (V^{N_{2}}(\xb_0))_{k}
\sum\limits_{i=1}^{\infty} a_{ki} (V^{N_{2}}(\xb_0))_{i} < \dfrac{\d}{3}
$$
which yields $ \d >0 $. This contradicts to the negativity of $\delta$,
hence $ \ab\in \partial S_{\norm{\ab}}$.
\end{proof}

Using the same argument of Theorem \ref{thm_LP_con_PW_<0}, we can prove the following one.
\begin{cor}\label{cor_LLF_ome_<0}
	Let $ V\in\mathcal V^{-}$ and $\frak{f}(V)= \left( a_{ki} \right) $.
	If there exists a pair $(k_{0},i_{0})\in\bn^2$ such that $ a_{k_{0}i_{0}}<0 $,
then for $ \xb^*\in \w_{V}^{(w)}(\xb_{0})  $ we have
	\begin{eqnarray}\label{eqn_LLF_A1}
\mbox{either }	x^*_{k_{0}} = 0 \textmd{ or } x^*_{i_{0}} = 0
	\end{eqnarray}
\end{cor}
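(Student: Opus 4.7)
The plan is to argue by contradiction, following the same structure as the proof of Theorem \ref{thm_LP_con_PW_<0} but isolating the fixed pair $(k_{0},i_{0})$ through a sign-definite decomposition. Since $V\in\mathcal V^{-}$ means $a_{ki}\leq 0$ whenever $k<i$, the hypothesis $a_{k_{0}i_{0}}<0$ together with skew-symmetry forces $k_{0}<i_{0}$. Suppose, for contradiction, that $x^{*}_{k_{0}}>0$ and $x^{*}_{i_{0}}>0$, and choose a subsequence $\{n_{j}\}$ with $V^{n_{j}}(\mathbf x_{0})\stackrel{\mathrm{p.w.}}{\longrightarrow}\mathbf x^{*}$.

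\medskip

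\noindent\textbf{Construction of the quasi Lyapunov function.} I would pick any strictly decreasing $\mathbf b_{\downarrow}\in c_{0}$, for instance $b_{k}=1/k$, so that in particular $b_{k_{0}}>b_{i_{0}}>0$. By Theorem \ref{thm_qLf}, $\varphi_{\mathbf b_{\downarrow}}$ is a quasi Lyapunov function for $V$; moreover, its proof shows that $\{\varphi_{\mathbf b_{\downarrow}}(V^{n}(\mathbf x_{0}))\}$ is monotone decreasing. Being bounded below by $0$, it therefore converges, forcing the consecutive differences to vanish as $n\to\infty$.

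\medskip

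\noindent\textbf{Sign-definite expansion.} Using skew-symmetry of $(a_{ki})$, one rewrites
\[
\varphi_{\mathbf b_{\downarrow}}(V^{n+1}(\mathbf x_{0}))-\varphi_{\mathbf b_{\downarrow}}(V^{n}(\mathbf x_{0}))=\sum_{k=1}^{\infty}\sum_{i=k+1}^{\infty}a_{ki}(b_{k}-b_{i})(V^{n}(\mathbf x_{0}))_{k}(V^{n}(\mathbf x_{0}))_{i}.
\]
Because $a_{ki}\leq 0$ for $k<i$ and $b_{k}-b_{i}>0$ for $k<i$, every summand is non-positive. Hence the absolute value of any single summand is dominated by the absolute value of the whole sum, which tends to $0$. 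Applied to $(k_{0},i_{0})$, this yields
\[
|a_{k_{0}i_{0}}|(b_{k_{0}}-b_{i_{0}})(V^{n}(\mathbf x_{0}))_{k_{0}}(V^{n}(\mathbf x_{0}))_{i_{0}}\longrightarrow 0.
\]
Passing along the subsequence $\{n_{j}\}$ and invoking pointwise convergence gives
\[
|a_{k_{0}i_{0}}|(b_{k_{0}}-b_{i_{0}})x^{*}_{k_{0}}x^{*}_{i_{0}}=0,
\]
and since both $|a_{k_{0}i_{0}}|$ and $b_{k_{0}}-b_{i_{0}}$ are strictly positive, we conclude $x^{*}_{k_{0}}x^{*}_{i_{0}}=0$, contradicting the assumption.

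\medskip

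\noindent\textbf{Main obstacle.} The only slightly delicate step is the extraction of individual-term convergence from the convergence of the full double sum; this is legitimate here exactly because every summand shares the same sign, so that $|\text{term}|\leq|\text{sum}|$. It is precisely this sign-definiteness, which is unavailable in the proof of Theorem \ref{thm_LP_con_PW_<0} where one only learns that \emph{some} coordinate of $\mathbf x^{*}$ vanishes, that lets the corollary single out the specific pair $(k_{0},i_{0})$.
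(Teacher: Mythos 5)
Your proof is correct, and it takes a noticeably more direct route than the paper. The paper disposes of this corollary by invoking "the same argument" as Theorem \ref{thm_LP_con_PW_<0}, i.e. the ratio trick $\frac{\f_{{\bf b}_\downarrow}(V^{n_j+1}(\xb_0))-\norm{\ab}}{\f_{{\bf b}_\downarrow}(V^{n_j}(\xb_0))-\norm{\ab}}\to 1$ (which needs the limit of the Lyapunov values to stay away from $\norm{\ab}$, hence a case split), followed by the $I_1,I_2,II_1,II_2$ tail estimates with carefully chosen indices $N_1,N_2,\widetilde N,\overline N$ and the block sequence \eqref{c_k}. You bypass all of that: since $\{\f_{{\bf b}_\downarrow}(V^n(\xb_0))\}$ is monotone and bounded (Theorem \ref{thm_qLf}), the increments tend to zero along the whole orbit, and because every term of the increment $\sum_{k<i}a_{ki}(b_k-b_i)x_kx_i$ is non-positive for $V\in\mathcal V^-$ with a strictly decreasing ${\bf b}_\downarrow$, the single $(k_0,i_0)$ term is dominated in absolute value by the whole increment and hence also tends to zero; pointwise convergence along the subsequence then gives $x^*_{k_0}x^*_{i_0}=0$ directly. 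Two small merits of your version worth noting: choosing $b_k=1/k$ guarantees the strict inequality $b_{k_0}>b_{i_0}$ needed at the distinguished pair (the paper's ${\bf b}^{[m]}_\downarrow$ from \eqref{c_k} is only non-strictly decreasing, so strictness there has to be arranged), and your argument needs no assumption that the limit point has full support, nor any nonvanishing denominator. Your preliminary observation that $a_{k_0i_0}<0$ together with $\mathfrak f(V)\in\mathbb A^-$ forces $k_0<i_0$ is also correct and is exactly what makes the sign-definiteness available. In short: same quasi-Lyapunov framework, but a cleaner extraction of the conclusion; what the paper's heavier machinery buys is the stronger Theorem \ref{thm_LP_con_PW_<0} itself, from which it merely reads off this corollary.
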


According to Proposition \ref{prop_w=w} and Theorem \ref{thm_LP_con_PW_<0} we have the following result:
\begin{thm}\label{thm_say}
Let $V\in\mathcal V^-$ and $\xb_0\in S\setminus Fix(V)$. Then the following statements hold:
\begin{enumerate}
\item[$(i)$] If $\w_V(\xb_0)\neq\emptyset$
then $\w_V(\xb_0)=\w_V^{(w)}(\xb_0)\subset\partial S$;\\
\item[$(ii)$] If $\w_V(\xb_0)=\emptyset$ then $\w_V^{(w)}(\xb_0)\subset\partial S_r$ for some $r<1$.
\end{enumerate}
\end{thm}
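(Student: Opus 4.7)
The statement is essentially a bookkeeping synthesis of four prior results: the equivalence Theorem \ref{omega=omega}(ii), the norm bound Proposition \ref{prop_w=w}(ii), the boundary theorem \ref{thm_LP_con_PW_<0}, and Lemmas \ref{lem1}--\ref{lem4}. The only genuinely nontrivial bit is reconciling the fact that Theorem \ref{thm_LP_con_PW_<0} is stated only for $\xb_0\in riS$ with the possibility that $\xb_0\in\partial S\setminus Fix(V)$, and then splitting on whether $\w_V(\xb_0)$ is empty or not.

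First I would dispatch (i). Assume $\w_V(\xb_0)\neq\emptyset$ and pick $\xb^{*}\in\w_V(\xb_0)$; Lemma \ref{lem1} gives $\xb^{*}\in S$. By Theorem \ref{omega=omega}(ii) one has $\w_V(\xb_0)=\w_V^{(w)}(\xb_0)$, so this common set lies in $S$. To push it into $\partial S$, I treat two subcases. If $\xb_0\in riS$, Theorem \ref{thm_LP_con_PW_<0} yields $\w_V^{(w)}(\xb_0)\subset \partial S_r$ for some $r\le 1$, and the inclusion $\w_V^{(w)}(\xb_0)\subset S$ forces $r=1$, so $\w_V^{(w)}(\xb_0)\subset\partial S$. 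If instead $\xb_0\in\partial S$, write $\xb_0\in ri\Gamma_I$ with $I\neq\emptyset$; by Proposition \ref{5vo5}(i) the trajectory stays in $ri\Gamma_I$, so every (pointwise or norm) limit point has a zero coordinate, hence already belongs to $\partial S$.

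Next I would handle (ii). Suppose $\w_V(\xb_0)=\emptyset$. Proposition \ref{Far} yields $\w_V^{(w)}(\xb_0)\neq\emptyset$, and Proposition \ref{prop_w=w}(ii) gives $\w_V^{(w)}(\xb_0)\subset S_r$ for some $r\le 1$. The crux is to rule out $r=1$: if $r=1$ any weak limit point $\xb^{*}$ lies in $S$, and then Lemma \ref{lem4} (implication $(2)\Rightarrow(1)$) promotes the corresponding pointwise subsequential convergence to $\ell^1$-norm convergence, producing a point of $\w_V(\xb_0)$ and contradicting its emptiness. Hence $r<1$. To upgrade the inclusion $\w_V^{(w)}(\xb_0)\subset S_r$ to $\w_V^{(w)}(\xb_0)\subset\partial S_r$, apply Theorem \ref{thm_LP_con_PW_<0} directly when $\xb_0\in riS$, and otherwise use the face-invariance argument from the previous paragraph: since $\xb_0\in ri\Gamma_I$ with $I\neq\emptyset$, the intersection $ri\Gamma_I\cap riS_r=\emptyset$, so every weak limit in $\Gamma_I\cap S_r$ sits automatically in $\partial S_r$.

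The main (modest) obstacle is the boundary initial-data case; it is handled uniformly by the observation in the previous paragraph, which simultaneously covers both (i) and (ii). No new Lyapunov analysis is required, since all the analytic work has already been absorbed into Theorem \ref{thm_LP_con_PW_<0} and Proposition \ref{prop_w=w}.
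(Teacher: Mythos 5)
Your proof is correct, and it follows the same overall route as the paper: the paper's entire proof of Theorem \ref{thm_say} is the single sentence ``According to Proposition \ref{prop_w=w} and Theorem \ref{thm_LP_con_PW_<0} we have the following result,'' i.e.\ a synthesis of exactly the ingredients you use. The difference is one of completeness rather than of method: you explicitly supply the steps the paper leaves implicit, namely (a) invoking Theorem \ref{omega=omega}$(ii)$ (equivalently Lemmas \ref{lem1} and \ref{lem4}) to get the equality $\w_V(\xb_0)=\w_V^{(w)}(\xb_0)$ in part $(i)$, (b) ruling out $r=1$ in part $(ii)$ by promoting a pointwise-convergent subsequence to a norm-convergent one via Lemma \ref{lem4}, which would contradict $\w_V(\xb_0)=\emptyset$, and (c) handling initial points $\xb_0\in\partial S$, which Theorem \ref{thm_LP_con_PW_<0} as stated does not cover since it assumes $\xb_0\in riS$. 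Your patch for (c) — writing $\xb_0\in ri\Gamma_I$ with $I\neq\emptyset$, using Proposition \ref{5vo5}$(i)$ to keep the whole trajectory in $\Gamma_I$, and noting that pointwise limits then retain the zero coordinates indexed by $I$, hence lie outside $riS_r$ — is sound and is genuinely needed for the theorem as stated on all of $S\setminus Fix(V)$; the paper silently glosses over this boundary case. So your write-up is, if anything, a more careful version of the paper's intended argument, with no gaps I can see.
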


We stress that Theorem \ref{thm_say} states that a limit point of the set
$\{V^n(\xb)\}$ w.r.t. pointwise convergence belongs to
$\partial S_r$ for some $r\leq1$. On the other hand,
it does not give a relation between $\xb\in S$ and $r\leq1$. So, it would be better
if we are able to find that relation for a given $V\in\mathcal V^-$.
Next result sheds some light to this question.
\begin{thm}\label{thm_LLF_E_<0}
Let $ V\in\mathcal V^-$  and $\frak{f}(V)= \left( a_{ki} \right) $.
Assume that $a_{ki}<0$ for all $k<i$
then for every $\xb\in S$ the following statements hold:
\begin{enumerate}
\item[$(i)$] if $|supp(\xb)|<\infty$ then $\w_{V}^{(w)}(\xb)=\{{\bf e}_{\max\{supp(\xb)\}}\}$;
\\
\item[$(ii)$] if $\sup\limits_{i>k}\{a_{ki}\}<0$ for all $ k\geq1$ and $|supp(\xb)|=\infty$, then
$$
\w_{V}^{(w)}(\xb)=\{{\bf 0}\}.
$$
\end{enumerate}
\end{thm}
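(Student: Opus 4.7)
The starting observation is that under the hypothesis $a_{ki}<0$ for all $k<i$, applying Corollary \ref{cor_LLF_ome_<0} pairwise forces every weak $\omega$-limit point $\xb^*$ to satisfy $|\mathrm{supp}(\xb^*)|\leq 1$, while Proposition \ref{thm_main} gives $\w_V^{(w)}(\xb)=\{\xb^*\}$; combined with the sequential compactness of ${\bf B}_1^+$ (Proposition \ref{Far}) this upgrades to full convergence $V^n(\xb)\stackrel{\mathrm{p.w.}}{\longrightarrow}\xb^*$. Thus $\xb^*$ is either $\mathbf{0}$ or of the form $c\,\eb_j$ with $c>0$, and both parts reduce to identifying which alternative occurs.

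For (i), with $m_0=\max\{\mathrm{supp}(\xb)\}$, Proposition \ref{5vo5}(i) gives forward-invariance of the face $\Gamma_{\{k>m_0\}}$, so $(V^n(\xb))_k=0$ for every $k>m_0$ and $n\geq 0$; hence $\mathrm{supp}(\xb^*)\subseteq\{m_0\}$. Since $a_{m_0,i}=-a_{i,m_0}>0$ for every $i<m_0$, the bracket $1+\sum_i a_{m_0,i}(V^n(\xb))_i\geq 1$, so $(V^n(\xb))_{m_0}$ is non-decreasing and stays $\geq x_{m_0}>0$; in particular $x_{m_0}^*>0$ and $\xb^*=c\,\eb_{m_0}$ with $c>0$. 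Finally, because only the finitely many coordinates $k<m_0$ contribute and each satisfies $(V^n(\xb))_k\to 0$ pointwise, the conservation $\sum_{k=1}^{m_0}(V^n(\xb))_k\equiv 1$ forces $(V^n(\xb))_{m_0}\to 1$, so $\xb^*=\eb_{m_0}$.

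For (ii), I argue by contradiction, supposing $\xb^*=c\,\eb_j$ with $c>0$, and set $A_k:=-\sup_{i>k}a_{ki}>0$. In the subcase $0<c<1$, the normalization $\sum_i(V^n(\xb))_i\equiv 1$ combined with $(V^n(\xb))_j\to c$ and $(V^n(\xb))_i\to 0$ for each $i<j$ (finitely many) forces $\sum_{i>j}(V^n(\xb))_i\to 1-c>0$; on the other hand, the identity $(V^{n+1}(\xb))_j/(V^n(\xb))_j=1+\sum_i a_{j,i}(V^n(\xb))_i$ tends to $1$, so $\sum_i a_{j,i}(V^n(\xb))_i\to 0$. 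Splitting the sum at $i=j$, the head over $i<j$ vanishes term-by-term, while the tail is bounded above by $-A_j\sum_{i>j}(V^n(\xb))_i$, giving $\limsup_n\sum_i a_{j,i}(V^n(\xb))_i\leq -A_j(1-c)<0$, a contradiction. In the subcase $c=1$, Lemma \ref{lem4} upgrades the convergence $V^n(\xb)\to \eb_j$ to $\ell^1$-norm; using $|\mathrm{supp}(\xb)|=\infty$, pick $j'>j$ with $j'\in\mathrm{supp}(\xb)$, so Proposition \ref{5vo5}(i) keeps $(V^n(\xb))_{j'}>0$ for all $n$. The inequality $a_{j',j}=-a_{j,j'}\geq A_j$ together with the $\ell^1$-smallness $\sum_{i\neq j}(V^n(\xb))_i\to 0$ then gives a geometric lower bound $(V^{n+1}(\xb))_{j'}\geq (1+A_j/2)(V^n(\xb))_{j'}$ for all large $n$, contradicting $(V^n(\xb))_{j'}\leq 1$. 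Hence $\xb^*=\mathbf{0}$.

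\textbf{Main obstacle.} The delicate step is the subcase $c=1$ in part (ii): once $\xb^*=\eb_j\in S$, pointwise convergence is promoted to $\ell^1$-convergence by Lemma \ref{lem4} and the tail $\sum_{i>j}(V^n(\xb))_i\to 0$, so the ratio-at-index-$j$ argument used for $0<c<1$ no longer produces a negative $\limsup$. The workaround is to shift attention to a larger coordinate $j'\in\mathrm{supp}(\xb)$ (which exists precisely because $|\mathrm{supp}(\xb)|=\infty$) and to convert the uniform strict negativity $\sup_{i>k}a_{ki}<0$ into a multiplicative amplification of $(V^n(\xb))_{j'}$. This is why part (ii) needs the stronger assumption $\sup_{i>k}a_{ki}<0$ rather than just $a_{ki}<0$ as in part (i).
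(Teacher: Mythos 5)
Your argument is correct, but it follows a genuinely different route from the paper's. The paper proves both parts directly and self-containedly: for (i) it bounds the partial sum $\sum_{i<m_0}(V^{n+1}(\xb))_i$ by $f_\alpha\bigl(\sum_{i<m_0}(V^{n}(\xb))_i\bigr)$ with $f_\alpha(\xi)=\xi(1+\alpha-\alpha\xi)$, $\alpha=\max_{i<m_0}a_{im_0}<0$, and uses the dynamics of this scalar comparison map to force the partial sum to $0$, hence $V^n(\xb)\stackrel{\norm{\cdot}}{\longrightarrow}\eb_{m_0}$; for (ii) it repeats this with $\alpha_m=\max_{k\le m}\sup_{i>k}a_{ki}<0$ for every truncation level $m$, getting $\sum_{i\le m}(V^n(\xb))_i\to0$ and hence pointwise convergence to $\bf0$. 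You instead lean on the structural machinery already built in the paper: Corollary \ref{cor_LLF_ome_<0} applied to every pair $(k,i)$ gives $|supp(\xb^*)|\le1$, Proposition \ref{thm_main} plus the sequential compactness of ${\bf B}_1^+$ (Proposition \ref{Far}) upgrades this to full pointwise convergence to a single point $c\,\eb_j$ or $\bf0$, and you then identify the limit by coordinate-wise arguments (monotonicity of $(V^n(\xb))_{m_0}$ in (i); the ratio-at-$j$ and geometric-amplification-at-$j'$ arguments in (ii)). Both are sound; the paper's route is more elementary and quantitative and does not depend on the quasi-Lyapunov corollaries, while yours is more conceptual, reuses the established limit-set theory, and isolates exactly where the stronger hypothesis $\sup_{i>k}a_{ki}<0$ enters (only at the index where mass would concentrate). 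Two small points you should make explicit: the positivity $(V^n(\xb))_j>0$ and $(V^n(\xb))_{j'}>0$ needed for your ratio and growth arguments follows from Proposition \ref{5vo5}$(i)$ (if $x_j=0$ the coordinate stays $0$, contradicting convergence to $c>0$); and Corollary \ref{cor_LLF_ome_<0} is derived "by the same argument" as Theorem \ref{thm_LP_con_PW_<0}, which is stated for $\xb_0\in riS\setminus Fix(V)$, whereas in part (i) your initial point lies on $\partial S$ — the underlying quasi-Lyapunov contradiction does not use interiority of $\xb_0$, but a sentence noting this would close the loop, and it is precisely the dependence the paper's own proof avoids.
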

\begin{proof}
$(i)$ Let $|supp(\xb)|<\infty$. Suppose that $m_0=\max\{supp(\xb)\}$. Without lost of
generality, we may assume that $x_{m_0}\neq1$. It yields that $m_0>1$ and
the existence of $k<m_0$ such that $x_k\neq0$. Denote $\alpha=\max_{i<m_0}\{a_{im_0}\}$.
It is clear that $-1\leq\alpha<0$. Then, one gets
\begin{eqnarray}\label{tengsiz1}
\sum_{i<m_0}(V^{n+1}(\xb))_i&=&\sum_{i<m_0}(V^{n}(\xb))_i+\sum_{j=m_0}^\infty\sum_{i<m_0}a_{ij}(V^{n}(\xb))_i(V^{n}(\xb))_j\nonumber\\[2mm]
&=&\sum_{i<m_0}(V^{n}(\xb))_i+\sum_{i<m_0}a_{im_0}(V^{n}(\xb))_i(V^{n}(\xb))_{m_0}\nonumber\\[2mm]
&\leq&\sum_{i<m_0}(V^{n}(\xb))_i\left(1+\alpha-\alpha\sum_{i<m_0}(V^{n}(\xb))_i\right)
\end{eqnarray}
Let us consider a function $f_\alpha:[0,1)\to[0,1)$ given by
 $$
 f_\alpha(\xi)=\xi(1+\alpha-\alpha\xi),
 $$
 where, $\alpha\in[-1,0)$. It is easy to check that $f$ has a unique fixed point $\xi_0=0$. Moreover, for any
 initial point $\xi\in(0,1)$ trajectory $\{f^n(\xi)\}$ tends to $0$. So, keeping in mind this fact
 from \eqref{tengsiz1} we conclude that
 $$
 \sum_{i<m_0}(V^{n}(\xb))_i\to0,\ \ \mbox{as}\ n\to\infty.
 $$
 Consequently, from $\sum\limits_{i=1}^{m_0}(V^n(\xb))_{i}=1$ one finds
 $$
 (V^n(\xb))_{m_0}\to1,\ \ \mbox{as}\ n\to\infty.
 $$
 This means that $V^n(\xb)\stackrel{\norm{\cdot}}{\longrightarrow}{\bf e}_{m_0}$.

$(ii)$
Assume that $\sup\limits_{i>k}\{a_{ki}\}<0$ for all $ k\geq1$. Let us define a sequence $\{\alpha_m\}_{m\geq1}$
as follows:
$$
\alpha_m=\max_{k\leq m}\{\sup_{i>k}\{a_{ki}\}\}.
$$
It is clear that $\alpha_m<0$ for every $m\geq1$. Then for any $m\geq1$ we have
\begin{eqnarray}\label{tengsiz}
\sum_{i=1}^m(V^{n+1}(\xb))_i&=&\sum_{i=1}^m(V^{n}(\xb))_i+\sum_{j>m}\sum_{i=1}^ma_{ij}(V^{n}(\xb))_i(V^{n}(\xb))_j\nonumber\\[2mm]
&\leq&\sum_{i=1}^m(V^{n}(\xb))_i+\alpha_m\sum_{j>m}\sum_{i=1}^m(V^{n}(\xb))_i(V^{n}(\xb))_j\nonumber\\[2mm]
&=&\sum_{i=1}^m(V^{n}(\xb))_i\left(1+\alpha_m\sum_{j>m}(V^{n}(\xb))_j\right)\nonumber\\[2mm]
&=&\sum_{i=1}^m(V^{n}(\xb))_i\left(1+\alpha_m-\alpha_m\sum_{i=1}^m(V^{n}(\xb))_i\right)
\end{eqnarray}
For any $m\geq1$ on a unit interval we define a sequence $y^{(n)}_m$  as follows
$$
y^{(n)}_m=\sum_{i=1}^m(V^{n}(\xb))_i.
$$
Then from \eqref{tengsiz} one has
$$
y^{(n+1)}_m\leq y^{(n)}_m\left(1+\alpha_m-\alpha_m y^{(n)}_m\right)
$$
Since $\alpha_m\in[-1,0)$ again using the dynamics of function $f_{\alpha_m}$,
we conclude that $y^{(n)}_m\to0$ as $n\to\infty$. Hence,
$$
(V^{n}(\xb))_i\to0,\ \ \ i\leq m.
$$
The arbitraryness of $m\geq1$ yields $V^n(\xb)\stackrel{\mathrm{p.w.}}{\longrightarrow}\bf0$ as $n\to\infty$.
The proof is complete.
\end{proof}

The proved Theorem suggest the following conjecture:
\begin{conj}
Let $ V\in\tilde{\mathcal V}^-$  and and $\frak{f}(V)= \left( a_{ki} \right) $.
If $a_{ki}<0$ for all $k<i$
then for every $\xb\in S$ the following statements hold:
\begin{enumerate}
\item[$(i)$] if $|supp(\xb)|<\infty$ then $\w_{V}^{(w)}(\xb)=\{{\bf e}_{\max\{supp(\xb)\}}\}$;
\item[$(ii)$] if $|supp(\xb)|=\infty$ then $\w_{V}^{(w)}(\xb)=\{{\bf 0}\}$.
\end{enumerate}
\end{conj}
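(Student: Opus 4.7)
The plan is to prove both statements by reducing to the case $V \in \mathcal V^-$ and then refining the techniques of Theorem \ref{thm_LLF_E_<0}. The strict hypothesis $a_{ki} < 0$ for all $k < i$ is incompatible with any nontrivial off-diagonal null block in the block decomposition defining $\tilde{\mathcal V}^-$, hence forces $k_0 = 1$, i.e.\ $V \in \mathcal V^-$. For part $(i)$, Proposition \ref{5vo5}$(i)$ confines the orbit to the invariant face cut out by $y_i = 0$ for $i > m_0 := \max\{\mathrm{supp}(\xb)\}$, which is a finite-dimensional simplex on which $V$ restricts to a Volterra operator with all above-diagonal entries strictly negative; Theorem \ref{thm_LLF_E_<0}$(i)$ then delivers $V^n(\xb) \xrightarrow{\|\cdot\|} \eb_{m_0}$, so $\omega_V^{(w)}(\xb) = \{\eb_{m_0}\}$.

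For part $(ii)$, the unique element $\ab$ of $\omega_V^{(w)}(\xb) = \{\ab\}$ (Proposition \ref{thm_main}) is the pointwise limit of $V^n(\xb)$, and I would argue $\ab = {\bf 0}$ by contradiction. The pivotal estimate comes from monotonicity of $T_m(\yb) := \sum_{k \leq m} y_k$ along orbits, which (with ${\bf b} = (1,\dots,1,0,\dots)$ in Theorem \ref{thm_qLf}) telescopes to
\begin{equation*}
T_m(\xb) - \sum_{k \leq m} a_k \;=\; \sum_{n=0}^\infty \sum_{k \leq m,\, i > m} |a_{ki}|\,(V^n(\xb))_k (V^n(\xb))_i \;<\; \infty \qquad (m \geq 1),
\end{equation*}
so $\sum_n (V^n(\xb))_k (V^n(\xb))_i < \infty$ for every pair $k < i$. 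Let $K := \min\{k : a_k > 0\}$, which exists by assumption. Since $(V^n(\xb))_K \geq a_K/2$ for large $n$, the pairwise summability forces $\sum_n (V^n(\xb))_i < \infty$ for every $i > K$, which rules out any further index $K' > K$ with $a_{K'} > 0$ (else $(V^n(\xb))_{K'} \to a_{K'} > 0$ would give a divergent series). Hence $\ab = r\eb_K$ with $r = a_K$. The same estimate at $m = K$ gives $T_K(\xb) \geq r$, and combined with $|\mathrm{supp}(\xb)| = \infty$ excludes $r = 1$, so $r < 1$ and $\sum_{i > K}(V^n(\xb))_i \to 1 - r > 0$.

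The principal obstacle is to extract a contradiction from the three compatible-looking facts
\begin{equation*}
(V^n(\xb))_K \to r > 0, \qquad \sum_{i > K}(V^n(\xb))_i \to 1 - r > 0, \qquad \sum_{i > K} |a_{Ki}|(V^n(\xb))_i \to 0,
\end{equation*}
the last of which follows from the summability estimate at $m = K$ after dividing by $(V^n(\xb))_K$. In the setting of Theorem \ref{thm_LLF_E_<0}$(ii)$ the uniform bound $\sup_{i > K} a_{Ki} < 0$ immediately reconciled these into a contradiction, but without it the escaping mass can concentrate on indices where $|a_{Ki}|$ is vanishingly small. To close the gap, my plan is to analyze the Volterra recursion row by row for $i > K$: per-coordinate summability $\sum_n (V^n(\xb))_i < \infty$ should, via inspection of the multiplicative ratio $(V^{n+1}(\xb))_i/(V^n(\xb))_i = 1 + \sum_{j<i}|a_{ji}|(V^n(\xb))_j - \sum_{j>i}|a_{ij}|(V^n(\xb))_j$, force $\liminf_n \sum_{j>i}|a_{ij}|(V^n(\xb))_j$ to dominate the positive inflow $|a_{K,i}|r$ coming from coordinate $K$, thereby producing an inductive cascade of lower bounds on the weighted outflows from rows $K+1, K+2,\dots$ that eventually clashes with the global conservation $\sum_j (V^n(\xb))_j = 1$. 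Making this cascade rigorous, and in particular controlling the interaction between infinitely many rows uniformly when summability provides only a sufficient, not a necessary, exponential-decay condition on the ratios, is the core difficulty and the reason this statement remains a conjecture.
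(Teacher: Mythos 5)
The statement you are proving is left as an open \emph{conjecture} in the paper: the authors prove it (Theorem \ref{thm_LLF_E_<0}) only under the extra uniformity hypothesis $\sup_{i>k}\{a_{ki}\}<0$, and they explicitly pose the version without that hypothesis as a conjecture. So there is no proof in the paper to compare against, and the relevant question is whether your argument settles the conjecture; it does not. Your treatment of part $(i)$ is fine and is essentially the paper's: strict negativity of all above-diagonal entries is incompatible with a nontrivial null block, so $k_0=1$ and $V\in\mathcal V^-$; the orbit stays in the finite face determined by $supp(\xb)$ (Proposition \ref{5vo5}), where the maximum of finitely many strictly negative entries is strictly negative, and Theorem \ref{thm_LLF_E_<0}$(i)$ gives $V^n(\xb)\to\eb_{m_0}$. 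But $(i)$ is not the new content of the conjecture; $(ii)$ is.

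For part $(ii)$ your partial reductions are correct: the telescoped Lyapunov identity with ${\bf b}=(1,\dots,1,0,\dots)$ does give $\sum_n (V^n(\xb))_k(V^n(\xb))_i<\infty$ for every $k<i$, and if the pointwise limit $\ab$ were nonzero this forces $\ab=r\,\eb_K$ with $K=\min\{k: a_k>0\}$, $0<r<1$, together with $\sum_{i>K}|a_{Ki}|(V^n(\xb))_i\to0$. However, the decisive step --- showing that mass $1-r>0$ cannot escape along coordinates $i$ for which $|a_{Ki}|$ is arbitrarily small --- is exactly the point where the uniform bound was used in Theorem \ref{thm_LLF_E_<0}$(ii)$, and your proposed ``inductive cascade'' on the ratios $(V^{n+1}(\xb))_i/(V^n(\xb))_i$ is only a plan: summability of $\sum_n(V^n(\xb))_i$ does not yield the claimed lower bounds on $\liminf_n\sum_{j>i}|a_{ij}|(V^n(\xb))_j$ (a summable sequence need not have its one-step growth factors eventually below any prescribed threshold at the times that matter), and the inflow $|a_{Ki}|r$ you want each row to dominate tends to $0$ as $i\to\infty$, so it is not clear the cascade ever accumulates enough weighted outflow to contradict $\sum_j(V^n(\xb))_j=1$. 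As you yourself acknowledge, this is the core difficulty and it remains open; the proposal is therefore an honest partial reduction, not a proof of the statement.
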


\section{Proofs of main results}

In this section we are going to prove the main results formulated
in Section 1. Before start proofs we need the following
auxiliary result.
\begin{prop}\label{thm_oxir}
Let $V\in\tilde{\mathcal V}^+\cup\tilde{\mathcal V}^-$.
Then for any $\xb_0\in S\setminus Fix(V)$
the following statements hold:
\begin{enumerate}
\item[$(i)$]
if $V\in\tilde{\mathcal V}^+$ then $\omega_{V}^{(w)}({\xb}_0)\in\partial S$;\\
\item[$(ii)$] if $V\in\tilde{\mathcal V}^-$ then there exists $r=r(\xb_0)\leq1$ such that
$\omega_{V}^{(w)}({\xb}_0)\in\partial S_r$.
\end{enumerate}
\end{prop}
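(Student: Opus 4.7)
The plan is to reduce Proposition \ref{thm_oxir} to results already established for the narrower classes $\mathcal V^\pm$. When $V\in \mathcal V^+\cup\mathcal V^-$ (i.e., the parameter $k_0=1$), the conclusion is essentially immediate: for $V\in \mathcal V^+$, combining Proposition \ref{prop_w=w}(i), Theorem \ref{omega=omega}(i) and Theorem \ref{thm_LLF_B} gives $\omega_V^{(w)}(\xb_0)=\omega_V(\xb_0)\subset \partial S$; for $V\in \mathcal V^-$ with $\xb_0\in riS$, Theorem \ref{thm_LP_con_PW_<0} yields the conclusion directly, while for $\xb_0\in \partial S$ the Volterra property $x_k=0\Rightarrow(V\xb)_k=0$ keeps at least one coordinate null throughout the orbit, so any pointwise limit inherits this zero, and combined with Proposition \ref{prop_w=w}(ii) we get $\xb^*\in\partial S_r$.

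When $k_0\geq 2$, I would exploit the block decomposition of $\frak f(V)$ to split the orbit into two non-interacting sub-orbits. Writing $\xb=(\xb',\xb'')$ for the first $k_0-1$ coordinates and the rest, and setting $s=\|\xb_0'\|$, $t=\|\xb_0''\|$, the skew-symmetry of $A$ and $B$ forces $\|\xb_n'\|=s$ and $\|\xb_n''\|=t$ for every $n$, while the vanishing off-diagonal blocks of $\frak f(V)$ guarantee that each sub-orbit depends only on its own coordinates. After dividing by the (positive) block masses, $\yb_n'=\xb_n'/s$ and $\yb_n''=\xb_n''/t$ are orbits of genuine Volterra q.s.o.\ $\tilde V_A$ (finite-dimensional, with matrix $sA$) and $\tilde V_B$ (with matrix $tB\in \mathbb{A}^\pm$, hence $\tilde V_B\in\mathcal V^\pm$). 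Every subsequential pointwise limit factors as $\xb^*=(s\yb^{'*},t\yb^{''*})$ with $\yb^{'*}\in\omega_{\tilde V_A}(\yb_0')$ (pointwise equals norm convergence here by finite-dimensional compactness) and $\yb^{''*}\in\omega_{\tilde V_B}^{(w)}(\yb_0'')$.

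From here two checks close the argument. For the norm: $\|\yb^{'*}\|=1$ by finite-dimensional mass conservation, while $\|\yb^{''*}\|$ equals $1$ when $\tilde V_B\in\mathcal V^+$ (Proposition \ref{prop_w=w}(i)), and equals a single constant $r'\leq 1$ when $\tilde V_B\in\mathcal V^-$ (Propositions \ref{prop_w=w}(ii) and \ref{thm_main}). Hence $\|\xb^*\|=s+t=1$ in the $\tilde{\mathcal V}^+$ case, and $\|\xb^*\|=s+tr'=:r\leq 1$ in the $\tilde{\mathcal V}^-$ case, uniformly over limit points. For the zero coordinate: since $\xb_0\notin Fix(V)$, at least one of $\yb_0'\notin Fix(\tilde V_A)$ or $\yb_0''\notin Fix(\tilde V_B)$ must hold, and then Theorem \ref{rg1}(i) (applied to $\tilde V_A$) or the $k_0=1$ case already proved (applied to $\tilde V_B$) furnishes a vanishing coordinate of $\yb^{'*}$ or $\yb^{''*}$, and hence of $\xb^*$. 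Thus $\xb^*\in\partial S$ in the $\tilde{\mathcal V}^+$ case and $\xb^*\in\partial S_r$ in the $\tilde{\mathcal V}^-$ case.

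The main obstacle will be dispatching degenerate subcases: $s=0$ or $t=0$ (one block empty), $A=0$ or $B=0$ (one block-matrix trivial), and $\yb_0''$ lying on the relative boundary of the rescaled simplex so that Theorem \ref{thm_LP_con_PW_<0} does not apply directly to $\tilde V_B$. Each is handled by observing that triviality of one block forces nontriviality of the other (since $\xb_0\notin Fix(V)$), and that boundary initial data automatically produce a preserved vanishing coordinate via the Volterra property.
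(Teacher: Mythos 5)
Your proposal is correct and follows essentially the same route as the paper's own proof: split the coordinates into the two blocks determined by $k_0$, observe that the block masses are conserved and the sub-orbits decouple, and then invoke Theorem \ref{rg1} for the finite-dimensional block and the $\mathcal V^{\pm}$ results (Theorem \ref{thm_LLF_B}, Theorem \ref{thm_LP_con_PW_<0}, Proposition \ref{prop_w=w}) for the infinite block, the case $k_0=1$ being the already-established results. If anything, your write-up is slightly more careful than the paper's in making the rescaled block operators (matrices $sA$, $tB$) explicit and in treating the degenerate subcases ($s=0$, $t=0$, a fixed block, or $\yb_0''\in\partial S$).
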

\begin{proof}
$(i)$ Let $V\in\tilde{\mathcal V}^+$. There exists $k_0\geq1$ such that $a_{ki}=0$
for any $k<k_0\leq i$ and $a_{ki}\geq0$ for every $k_0\leq k<i$.
Without lost of generality we may assume that
$k_0\geq2$. Indeed, if $k_0=1$ then we have $V\in\mathcal V^+$,
then the statement follows from Proposition \ref{thm_main}.

So, $k_0\geq2$ and take an arbitrary
$\xb_0\in S\setminus{Fix(V)}$. Let us denote
$$
\begin{array}{ll}
\yb_0=(y_1^{(0)},\dots,y_{k_0-1}^{(0)}):=(x_1^{(0)},\dots,x_{k_0-1}^{(0)}),\\[2mm]
\zb_0=(z_1^{(0)},z_2^{(0)},\dots):=(x_{k_0}^{(0)},x_{k_0+1}^{(0)},\dots),\\[2mm]
r_1=\sum\limits_{i=1}^{k_0-1}y_{i}^{(0)},\ r_2=\sum\limits_{i=1}^\infty z_{i}^{(0)}.
\end{array}
$$
It is clear that $r_1+r_2=1$ and $\yb_0\in S_{r_1}^{k_0-1}$, $\zb_0\in S_{r_2}$. We note that

Let us consider several cases w.r.t. $r_1,r_2$.

{\bf Case $r_1=0$}. Then $r_2=1$, so $\zb_{0}\in S$. Now, let us define
skew-symmetric matrix $(\tilde{a}_{ki})$ given by
\begin{equation}\label{tilde}
\tilde{a}_{ki}=a_{k+k_0-1,i+k_0-1},\ \ \ \forall k,i\geq1.
\end{equation}
Then corresponding Volterra operator $\tilde{V}$ belongs to $\mathcal V^+$.
Due to Theorem \ref{thm_LLF_B} we infer $\omega_{\tilde{V}}^{(w)}(\zb_0)\in\partial S$.
Then, from the following
$$
(V^n\xb_0)_k=\left\{
\begin{array}{ll}
0, & k<k_0\\
(\tilde{V}\zb_0)_{k-k_0+1}, & k\geq k_0
\end{array}
\right.
$$
we conclude that $\omega_V^{(w)}(\xb_0)\in\partial S$.

{\bf Case $r_2=0$}. In this case we have $r_1=1$. On the finite dimensional simplex $S^{k_0-1}$,
we consider Volterra q.s.o. $\hat{V}$ with skew-symmetric matrix $(\hat{a}_{ki})_{k,i\geq1}^n$ given by
\begin{equation}\label{hat}
\hat{a}_{ki}=a_{ki},\ \ \ 1\leq k,i\leq k_0-1.
\end{equation}
Due to Theorem \ref{rg1} we have $\omega_{\hat{V}}(\yb_0)\in\partial S^{k_0-1}$. Keeping in mind this fact,
from
$$
(V^n\xb_0)_k=\left\{
\begin{array}{ll}
(\hat{V}\yb_0)_k, & k<k_0\\
0, & k\geq k_0
\end{array}
\right.
$$
we infer $\omega_V^{(w)}(\xb_0)\in\partial S$.

{\bf Case $0<r_1,r_2<1$}. Then we have $\frac{\yb_0}{r_1}\in S^{k_0-1}$ and $\frac{\zb_0}{r_2}\in S$.
Consequently, from Theorem \ref{rg1} one has $\omega_{\hat{V}}(\frac{\yb_0}{r_1})\in\partial S^{k_0-1}$ and
Theorem \ref{thm_LLF_B} yields $\omega_{\tilde{V}}^{(w)}(\frac{\zb_0}{r_2})\in\partial S$.
Finally, from
$$
(V^n\xb_0)_k=\left\{
\begin{array}{ll}
(\hat{V}\yb_0)_k, & k<k_0\\
(\tilde{V}\zb_0)_{k-k_0+1}, & k\geq k_0
\end{array}
\right.
$$
we conclude that $\omega_V^{(w)}(\xb_0)\in\partial S$.

$(ii)$ The statement $(ii)$ can be proved by the same
argument as $(i)$.
\end{proof}

Now we are ready to proceed the proofs of
main results.

\begin{proof}[Proof of Theorem \ref{thm_asos}] The proof immediately
follows from Theorems \ref{thm_asos3} and \ref{thm_oxir}. \end{proof}

{\begin{proof}[Proof of Theorem \ref{thm_asos2}]  Let $V\in\tilde{\mathcal V}^+\cup\tilde{\mathcal V}^-$.
This means that there exists $k_0\geq1$ such that $a_{ki}=0$
for any $k<k_0\leq i$ and $a_{ki}\geq0$ for every $k_0\leq k<i$.

If $k_0=1$ then $V\in\mathcal V^+\cup\mathcal V^-$. So, the statement of the theorem immediately follows from
Proposition \ref{thm_main}.

Let us assume that $k_0>1$.
Then, due to the argument of Theorem \ref{thm_oxir} there exist two
operators  $\hat{V}$ and $\tilde{V}$
defined on
$S^{k_0-1}$ and $S$, respectively, such that
$V$ can be represented by
$$
(V(\xb_0))_k=\left\{
\begin{array}{ll}
(\hat{V}(\yb_0))_k, & \mbox{if }k<k_0\\
(\tilde{V}(\zb_0))_k, & \mbox{if }k\geq k_0
\end{array}
\right.
$$
where $\xb_0\in S$.

According to Theorem \ref{rg1} one has
\begin{equation}\label{123}
|\omega_{\hat{V}}(\yb_0)|=1\vee\infty.
\end{equation}
Furthermore, since $\tilde{V}\in\mathcal V^+\cup\mathcal V^-$ it then  follows from
Proposition \ref{thm_main} that
\begin{equation}\label{1234}
|\omega_{\tilde{V}}^{(w)}(\zb_0)|=1.
\end{equation}
Hence, \eqref{123},\eqref{1234} together with Theorem \ref{omega=omega}
imply the assertions $(i)$ and $(ii)$.

\end{proof}

\begin{proof}[Proof of Theorem \ref{thm_asos3}]

Thanks to Theorems \ref{omega=omega}, \ref{thm_LLF_B} and Proposition \ref{thm_main} we obtain the assertion $(i)$.
Using Theorems \ref{omega=omega}, \ref{thm_LP_con_PW_<0} and Proposition \ref{thm_main} one can prove $(ii)$.
\end{proof}

\begin{proof}[Proof of Theorem \ref{asos4}]  $(i)$ From Proposition \ref{thm_main} we infer that $V$ is weak ergodic at any point of $S$.

$(ii)$ Let $V\in\mathcal V^+$. Then, Theorem \ref{omega=omega} $(i)$ and
Proposition \ref{thm_main} yield $|\omega_V(\xb_0)|=1$ for any $\xb_0\in S$. This implies
the ergodicity of $V$ at $\xb_0$.

$(iii)$ Let $V\in\mathcal V^-$. At first, we suppose that $\omega_{V}(\xb_0)\neq\emptyset$. Then, from Theorem
\ref{omega=omega} and Proposition \ref{thm_main} we find $|\omega_V(\xb_0)|=1$. Hence, $V$ is ergodic at $\xb_0$.

Now, let us assume $\omega_V(\xb_0)=\emptyset$. We are going to
establish that $V$ is not ergodic at $\xb_0$. Suppose that $V$ is ergodic at $\xb_0$. This means that
there exists $\ab\in S$ such that
$$
\frac{1}{n}\sum_{k=0}^nV^k(\xb_0)\stackrel{\norm{\cdot}}{\longrightarrow}\ab,\ \ \mbox{as }n\to\infty.
$$
Then, Lemma \ref{lem4} implies
\begin{equation}\label{ddd}
\frac{1}{n}\sum_{k=0}^nV^k(\xb_0)\stackrel{\mathrm{p.w.}}{\longrightarrow}\ab,\ \ \mbox{as }n\to\infty.
\end{equation}
Hence, from Proposition \ref{thm_main} together with \eqref{ddd} we obtain
$$
V^n(\xb_0)\stackrel{\mathrm{p.w.}}{\longrightarrow}\ab,\ \ \mbox{as }n\to\infty.
$$
From the last one and noting $\omega_V(\xb_0)=\emptyset$ one gets $\norm{\ab}<1$. This contradicts
to $\ab\in S$. So, we conclude that $V$ is not ergodic at $\xb_0$.
\end{proof}

\section*{Acknowledgments}
The present work is supported by the UAEU "Start-Up" Grant, No.
31S259.

\end{document}